\documentclass[12pt]{article}


\usepackage{latexsym}
\usepackage{a4wide}
\usepackage{amscd}
\usepackage{graphics}
\usepackage{amsmath}
\usepackage{amssymb}
\usepackage{mathrsfs}
\usepackage{amsthm}
\usepackage{color}
\input xy
\xyoption{all}


\newcommand{\Z}{\mathbb{Z}}                     
\newcommand{\R}{\mathbb{R}}                     
\newcommand{\C}{\mathbb{C}}                     
\newcommand{\T}{\mathbb{T}}                     
\newcommand{\set}[2]{\left\{{#1}\mid{#2}\right\}}       
\newcommand{\ran}{\mathrm{ran\,}}		
\newcommand{\vol}{\mathrm{vol}}			



\newtheorem{thm}{\sc Theorem}      


\newtheorem*{cor*}{\sc Corollary}        

\newtheorem{lem}{\sc Lemma}            

\newtheorem*{prop*}{\sc Proposition}     



\newtheorem{rem}{\sc Remark}	    
\newtheorem*{rem*}{\sc Remark}	    	






\title{How large is the shadow of a symplectic ball?}

\author{Alberto Abbondandolo\footnote{Ruhr-Universit\"at Bochum, Fakult\"at f\"ur Mathematik, Universit\"atstrasse 150, Geb\"aude NA 4/33, D-44801 Bochum,
Germany. E-mail {\tt alberto.abbondandolo@rub.de}.} \;
 and Rostislav Matveyev\footnote{Universit\"at Leipzig, Mathematisches Institut, PF 10 09 20, D-04009 Leipzig, Germany. E-mail {\tt matveyev@math.uni-leipzig.de}.}}

\date{}

\begin{document}

\maketitle

\begin{abstract}
Consider the image of a $2n$-dimensional unit ball by a symplectic embedding into the standard symplectic vector space of dimension $2n$. Its $2k$-dimensional shadow is its orthogonal projection onto a complex subspace of real dimension $2k$. Is it true that the volume of this $2k$-dimensional shadow is at least the volume of the unit $2k$-dimensional ball? This statement is trivially true when $k=n$, and when $k=1$ it is a reformulation of Gromov's non-squeezing theorem. Therefore, this question can be considered as a middle-dimensional generalization of the non-squeezing theorem. We investigate the validity of this statement in the linear, nonlinear and perturbative setting.
\end{abstract}

\centerline{\small Mathematics Subject Classification: 37J10, 53D22, 70H15.}

\medskip

\centerline{\small Keywords: symplectic diffeomorphism, non-squeezing theorem.}

\section*{Introduction}

Let $\Omega$ be the standard symplectic form
\[
\Omega = \sum_{j=1}^n dp_j \wedge dq_j
\]
on $\R^{2n}$, the standard Euclidean space endowed with coordinates $(p_1,q_1,\dots,p_n,q_n)$. The nonsqueezing theorem of Gromov states that no symplectic diffeomorphism (i.e.\ diffeomorphism which preserves $\Omega$) can map the $2n$-dimensional ball $B_R$ of radius $R$ into the cylinder
\[
Z_S := \set{(p_1,q_1,\dots,p_n,q_n)\in \R^{2n}}{p_1^2+q_1^2 < S^2}
\]
if $S<R$ (see \cite{gro85}, and also \cite{eh89}, \cite{vit92}, \cite{hz94} for different proofs). This theorem shows that symplectic diffeomorphisms present two-dimensional rigidity phenomena (the base of the cylinder has dimension two), and not just the preservation of volume ensured by Liouville's theorem (which in the modern language just follows from the fact that, preserving $\Omega$, a symplectic diffeomorphism must preserve also $\Omega^n$, the $n$-times wedge of $\Omega$ by itself, which is a multiple of the standard volume form). Symplectic capacities are the standard tools which allow to quantify such two-dimensional rigidity phenomena (see \cite{eh89}, \cite{vit89} and \cite{hz94}).

Since symplectic diffeomorphisms preserve also the $2k$-form $\Omega^k$ for every $1\leq k \leq n$, after Gromov's result it was natural to think that there should be also middle dimensional rigidity phenomena. A possible question concerned the possibility of symplectically embedding one polydisk $\Pi := B^2_{R_1}\times \dots \times B^2_{R_n}$, where $B^2_{R_j}$ denotes the ball of radius $R_j$ in the plane associated to the coordinates $p_j$ and $q_j$, into another one $\Pi':=B^2_{R_1'}\times \dots \times B^2_{R_n'}$. If we adopt the standard convention that the radii of each of the two polydisks are increasing, Liouville's and Gromov's theorems immediately imply that if $\Pi$ can be symplectically embedded into $\Pi'$, then $R_1\cdots R_n \leq R_1' \cdots R_n'$ and $R_1\leq R_1'$. It was natural to expect other rigidity phenomena concerning other products of the $R_j$'s, but L.\ Guth recently ruled this out, by proving that there exists a constant $C(n)$ such that if $C(n)R_1\leq R_1'$ and $C(n) R_1 \cdots R_n \leq R_1' \cdots R_n'$, then $\Pi$ can be symplectically embedded into $\Pi'$ (see \cite{gut08}). See also \cite{sch05},  \cite{hut10}, \cite{hk10}, \cite{bh11}, \cite{mcd11}, \cite{ms12} and references therein for more quantitative results about the symplectic embedding problem for polydisks and other domains.

In this article, we would like to take a different point of view and to keep the ball as the domain of our symplectic embeddings. Following \cite{eg91}, we notice that Gromov's nonsqueezing theorem can be restated by saying that the two-dimensional shadow of a symplectic ball of radius $R$ in $\R^{2n}$ has area at least $\pi R^2$. More precisely, every symplectic embedding $\phi:B_R \rightarrow \R^{2n}$ satisfies the inequality
\begin{equation}
\label{area}
\mathrm{area} \bigl(P \phi (B_R) \bigr) \geq \pi R^2,
\end{equation}
where $P$ denotes the orthogonal projector onto the plane corresponding to the conjugate coordinates $p_1,q_1$. In fact, the latter statement is obviously stronger than the former. On the other hand, if the area of $A:= P \phi (B_R)$ is smaller than $\pi R^2$, then we can find a smooth area preserving diffeomorphism $\psi:A\hookrightarrow B^2_S$ for some $S<R$ (by a theorem of Moser \cite{mos65}, see also \cite[Introduction, Theorem 2]{hz94}), and the symplectic diffeomorphism $(\psi\times \mathrm{id}_{\R^{2n-2}}) \circ \phi$ maps $B_R$ into $Z_S$, thus violating the former formulation of Gromov's non-squeezing theorem. Actually, the reformulation (\ref{area}) is closer to Gromov's original proof.

In the above reformulation, the projector $P$ can be replaced by the orthogonal projector onto any complex line of $\R^{2n}\cong \C^n$, where the identification is given by 
\[
(p_1,q_1,\dots,p_n,q_n)\mapsto (p_1 + iq_1,\dots,p_n + i q_n).
\]
If one does not wish to use the complex structure of $\R^{2n}$, (\ref{area}) can be expressed using only the symplectic structure, by saying that if  $V$ is a symplectic plane in $\R^{2n}$ and $Q$ is the projector onto $V$ along the symplectic orthogonal complement of $V$, then
\begin{equation}
\label{area2}
\int_{Q \phi (B_R)} \Omega \geq \pi R^2,
\end{equation}
for every symplectic embedding $\phi:B_R \rightarrow \R^{2n}$. In fact, (\ref{area2}) follows from (\ref{area}) because the projector $Q$ is conjugated to an orthogonal projector onto a complex line by a symplectic linear automorphism of $\R^{2n}$.

Looking at the inequality (\ref{area}), it seems natural to ask whether an analogous statement holds for the volume of a higher-dimensional shadow of a symplectic ball. More precisely: if $V$ is a complex linear subspace of $\R^{2n}$ of real dimension $2k$ and $P$ is the orthogonal projector onto $V$, is it true that
\begin{equation}
\label{middle}
\vol_{2k} \bigl(P \phi (B_R) \bigr) \geq \omega_{2k} R^{2k},
\end{equation}
for every symplectic embedding $\phi:B_R \rightarrow \R^{2n}$ ? Here $\omega_{2k}$ denotes the volume of the unit $2k$-dimensional ball. Indeed, the case $k=1$ is precisely Gromov's theorem and for $k=n$ we have the equality in (\ref{middle}), by Liouville's theorem. The purely symplectic reformulation of this question, analogous to (\ref{area2}), would be asking whether 
\begin{equation}
\label{middle2}
\frac{1}{k!}  \int_{Q \phi (B_R)} \Omega^k  \geq \omega_{2k} R^{2k},
\end{equation}
when $Q$ is the projector onto a symplectic $2k$-dimensional linear subspace of $\R^{2n}$ along its symplectic orthogonal (the factor $k!$ appears because $\Omega^k$ restricts to $k!$-times the standard $2k$-volume form on every complex linear subspace of real dimension $2k$). 

The first aim of this paper is to show that (\ref{middle}) (hence also (\ref{middle2})) holds in the linear category: 

\begin{thm}[Linear non-squeezing]
\label{lns}
Let $\Phi$ be a linear symplectic automorphism of $\R^{2n}$, and let $P: \R^{2n} \rightarrow \R^{2n}$ be the orthogonal projector onto a complex linear subspace $V\subset \R^{2n}$ of real dimension $2k$, $1\leq k\leq n$. Then
\[
\vol_{2k} \bigl( P \Phi (B_R) \bigr) \geq \omega_{2k} R^{2k},
\]
and the equality holds if and only if the linear subspace $\Phi^T V$ is complex.
\end{thm}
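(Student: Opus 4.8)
The plan is to reduce the statement to a determinant inequality for positive-definite matrices. By rescaling we may assume $R=1$. Represent the projector $P$ by a $2k\times 2n$ matrix whose rows form an orthonormal basis of $V$, so that $PP^T=I_{2k}$ and $P^TP$ is the orthogonal projection onto $V$. Then $P\Phi(B_1)$ is the image of the unit ball under the surjective linear map $P\Phi\colon\R^{2n}\to\R^{2k}$, hence an ellipsoid of $2k$-volume $\omega_{2k}\sqrt{\det(P\Phi\Phi^TP^T)}$. Writing $G:=\Phi\Phi^T$, a symmetric positive-definite matrix, the theorem becomes the assertion that $\det(PGP^T)\ge1$, with equality precisely when $\Phi^TV$ is complex.

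The first step is to extract the structural consequence of symplecticity. From $\Phi J\Phi^T=J$ one gets $J\Phi^T=\Phi^{-1}J$, and a short manipulation then yields $G^{-1}=J^TGJ$; thus $G$ and $G^{-1}$ are conjugate by the orthogonal matrix $J$. Because $V$ is complex, $J$ restricts to an orthogonal automorphism of $V$, so there is an orthogonal $2k\times 2k$ matrix $J_0$ with $JP^T=P^TJ_0$; consequently $PG^{-1}P^T=J_0^T(PGP^T)J_0$, and in particular $\det(PG^{-1}P^T)=\det(PGP^T)$.

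The second step is the Schur-complement inequality: for any positive-definite $G$ and any subspace $V$ one has $PGP^T\ge(PG^{-1}P^T)^{-1}$ as quadratic forms on $V$, with equality if and only if $V$ is $G$-invariant. This is seen by writing $G$ in block form relative to $V\oplus V^\perp$: the left-hand side is the $V$-block $A$ and the right-hand side is the Schur complement $A-BD^{-1}B^T$, which equals $A$ exactly when the off-diagonal block $B$ vanishes. Taking determinants and combining with the first step, $\det(PGP^T)\ge\det(PG^{-1}P^T)^{-1}=\det(PGP^T)^{-1}$, whence $\det(PGP^T)\ge1$. In the equality case $\det(PGP^T)=1$ forces $PGP^T=(PG^{-1}P^T)^{-1}$ (a positive-definite matrix dominating another one of the same determinant must coincide with it), hence $V$ is $G$-invariant.

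It remains to identify $G$-invariance of $V$ with complexness of $\Phi^TV$. Using $J\Phi^T=\Phi^{-1}J$ together with $JV=V$, we compute $J(\Phi^TV)=\Phi^{-1}JV=\Phi^{-1}V$, so $\Phi^TV$ is complex iff $\Phi^{-1}V=\Phi^TV$ iff $V=\Phi\Phi^TV=GV$. I expect the core inequality to be soft once the $J$-conjugacy $G^{-1}=J^TGJ$ is in hand — that identity is exactly where symplecticity enters and the only place it is used. The main obstacle is rather the bookkeeping around the equality case: pinning down when the Schur-complement inequality is an equality, and translating that cleanly into the complex-subspace condition via the symplectic algebra without sign or transpose errors.
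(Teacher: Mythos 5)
Your argument is correct, and it takes a genuinely different route from the paper. The paper proves the inequality via exterior algebra: it writes $\vol_{2k}(P\Phi(B))/\omega_{2k}$ as $\sqrt{\det(A^TA|_{(\ker A)^\perp})}$ for $A=P\Phi$, bounds the relevant $2k$-vector from below by $\frac{1}{k!}|\Omega^k|$ using the Wirtinger (comass) inequality, and then uses symplectic invariance of $\Omega^k$ together with the fact that $\Omega^k$ restricts to $k!$ times the volume form on the complex subspace $V$; the equality case drops out of the equality case of Wirtinger (the span $\Phi^TV$ must be complex). You instead reduce everything to matrix analysis: the volume is $\omega_{2k}\sqrt{\det(PGP^T)}$ with $G=\Phi\Phi^T$, symplecticity enters only through the orthogonal conjugacy $G^{-1}=J^TGJ$ (which I checked: it does follow from $J\Phi^T=\Phi^{-1}J$), complexness of $V$ gives $\det(PG^{-1}P^T)=\det(PGP^T)$, and the Schur-complement inequality $(PG^{-1}P^T)^{-1}\le PGP^T$ then yields $\det(PGP^T)^2\ge 1$; your equality analysis ($M\ge N>0$ with $\det M=\det N$ forces $M=N$, hence $B=0$, hence $GV=V$, hence $\Phi^TV=\Phi^{-1}V$ complex) is also sound. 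What the paper's route buys is the direct link to the calibration-type Wirtinger inequality, which is the geometric mechanism reused elsewhere (e.g.\ in the remark on complex sections); what yours buys is a completely elementary, coordinate-level proof with a transparent accounting of exactly where symplecticity is used, and a clean block-matrix characterization of the equality case. One small thing to add for completeness: you only spell out that equality forces $G$-invariance of $V$; the converse half of the ``if and only if'' (that $\Phi^TV$ complex implies equality) should be stated explicitly, though it is immediate from your own identities, since $B=0$ gives $PG^{-1}P^T=(PGP^T)^{-1}$, which together with $\det(PG^{-1}P^T)=\det(PGP^T)$ forces $\det(PGP^T)=1$.
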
 

Here $\Phi^T$ denotes the adjoint of $\Phi$ with respect to the Euclidean product of $\R^{2n}$.
The proof is elementary but, as in the case of the standard linear non-squeezing (see \cite[Theorem 2.38]{ms98}), not completely straightforward. It is contained in Section \ref{1sec}. It is worth noticing that, unlike shadows, sections of the image of a ball by a linear symplectic automorphism $\Phi$ have small volume: if $V\subset \R^{2n}$ is a complex linear subspace of real dimension $2k$, then
\[
\vol_{2k} \bigl( V\cap \Phi (B_R) \bigr) \leq \omega_{2k} R^{2k},
\]
and the equality holds if and only if the linear subspace $\Phi^{-1} V$ is complex (see Remark \ref{sections} below).

Our second aim is to show that in the nonlinear category middle-dimensional shadows may have arbitrarily small volume:

\begin{thm}[Non-linear squeezing]
\label{squeez}
Let $P:\R^{2n}\rightarrow \R^{2n}$ be the orthogonal projection onto a complex linear subspace of $\R^{2n}$ of real dimension $2k$, with $2\leq k \leq n-1$. For every $\epsilon>0$ there exists a smooth symplectic embedding $\phi: B \rightarrow \R^{2n}$ such that
\[
\mathrm{vol}_{2k} \bigl( P \phi(B) \bigr) < \epsilon.
\]
\end{thm}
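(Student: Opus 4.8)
\emph{Idea of the proof.} The plan is to ``open up'' the ball in one complex direction transverse to $V$, possible precisely because $k\le n-1$, so that the orthogonal projection of the image onto $V$ gets trapped in a thin set of the form $\mathcal H:=\{\,|z_1z_2|<1\,\}$, where $z_j=p_j+iq_j$ and, using $k\ge2$, the complex lines $\C z_1$ and $\C z_2$ are chosen inside $V$. Concretely, compose $\phi$ on the target with a unitary automorphism of $\C^n$ --- these are symplectic, preserve complex subspaces, and leave the $2k$-volume of the shadow unchanged --- so that we may assume $V=\C^k\times\{0\}$, i.e.\ $P(z_1,\dots,z_n)=(z_1,\dots,z_k,0,\dots,0)$, and $\C z_{k+1}\not\subset V$. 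Suppose we have a symplectic embedding $\Psi$ of the unit ball $B^6\subset\R^6$ into $\C^3$ whose image has orthogonal projection onto the first two complex coordinates of $4$-volume $<\eta$. Since $(z_1,\dots,z_n)\in B$ forces $(z_1,z_2,z_{k+1})\in B^6$, one obtains a symplectic embedding $\phi$ of $B$ by applying $\Psi$ to the triple $(z_1,z_2,z_{k+1})$ and the identity to the remaining coordinates; on $\phi(B)$ the coordinates $z_3,\dots,z_k$ are unchanged, so $(z_3,\dots,z_k)$ ranges in the unit ball of $\R^{2k-4}$ while $(z_1,z_2)$ ranges in a set $W$ with $\vol_4 W<\eta$. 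Hence $\vol_{2k}(P\phi(B))<\eta\,\omega_{2k-4}$ (with $\omega_0:=1$), which is $<\epsilon$ once $\eta$ is small enough; so it suffices to treat $n=3$, $k=2$.

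\emph{A rescaling.} The domain $\mathcal H=\{(z_1,z_2,z_3)\in\C^3:|z_1z_2|<1\}$ is \emph{not} contained in any cylinder $\{|z_1|<\rho\}\times\C^2$ or $\{|z_2|<\rho\}\times\C^2$, since it projects onto each of the first two complex lines as all of $\C$; so the Gromov non-squeezing obstruction does not apply to embedding a large ball into it. Suppose that for every $R>0$ there is a symplectic embedding $\Psi_R$ of the ball $B^6_R$ of radius $R$ into $\mathcal H$. Then $x\mapsto R^{-1}\Psi_R(Rx)$ is a symplectic embedding of $B^6$ whose image lies in $\{|z_1z_2|<R^{-2}\}$ and, being bounded, inside a ball of some radius $M(R)$; an elementary integration then gives that the $4$-volume of its projection onto the first two complex coordinates is $O\!\bigl(R^{-4}\log(R\,M(R))\bigr)$, which tends to $0$ as $R\to\infty$ provided $M(R)$ grows only polynomially --- as our construction will ensure. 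Thus everything reduces to constructing $\Psi_R$.

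\emph{The core step and the main obstacle.} This construction must be genuinely nonlinear: by Theorem \ref{lns} no linear symplectic map (hence no time-one flow of a quadratic Hamiltonian) decreases the $2k$-volume of a shadow; the point is that a hyperbolic rotation of the $z_1$--$z_3$ plane would contract $|z_1|$, but only on part of the ball, and localising it to the region where it helps is exactly what forces the nonlinear construction. I would build $\Psi_R$ by a \emph{symplectic folding} along the $z_3$-direction: on the ``bad'' region of $B^6_R$, where $|z_1|\,|z_2|\ge1$, first apply in the $z_1$-plane a symplectic squeeze of ratio $\sim R^{-2}$, replacing the relevant disk by a long thin ellipse, and then fold the long, protruding part of that ellipse out along the unbounded $z_3$-axis, to distance $\sim R^3$, so that on the folded image one has $|z_1|\lesssim R^{-1}$ and therefore $|z_1|\,|z_2|<1$; on the complementary region, where $|z_1|\,|z_2|<1$ already, keep the embedding essentially untouched, with a smooth transition at the boundary. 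The folded piece then has $|z_3|\sim R^3$, so it stays clear of the untouched piece; the whole image lies in $\mathcal H$ and has diameter polynomial in $R$. Here $k\le n-1$ furnishes the transverse direction $z_3$ to fold into, and $k\ge2$ is essential because $\mathcal H$, while thin in $\C^2$, has full projection onto the single line $\C z_1$: were $V$ one-dimensional the target would be forced into a cylinder and Gromov's theorem would rule out the embedding. The delicate part --- and the main obstacle --- is exactly this last step: the squeeze, the fold, and the transition must be assembled into one \emph{smooth} symplectic embedding (injectivity across the fold is not automatic, and the pieces must match to infinite order), while keeping the image of polynomial diameter so that the estimate above closes. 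Everything else is bookkeeping.
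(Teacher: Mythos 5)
Your reduction to the case $n=3$, $k=2$ and the final rescaling/volume estimate are fine and parallel to what the paper does, but the heart of the theorem is precisely the step you defer: producing the nonlinear symplectic embedding. Everything before it really is bookkeeping, so an argument that stops at ``the squeeze, the fold, and the transition must be assembled into one smooth symplectic embedding'' has not proved anything — that assembly \emph{is} the theorem. Moreover, the mechanism you sketch does not work as described. A symplectic squeeze of ratio $\sim R^{-2}$ in the $(p_1,q_1)$-plane shrinks one real coordinate but stretches the conjugate one to length $\sim R^3$; it does not make the modulus $|z_1|$ small. Folding or translating the protruding part along the $z_3$-axis leaves the $z_1$-coordinate untouched, so on the folded image $|z_1|$ is still of order $R^3$ wherever it was stretched, and the image is not contained in $\mathcal H=\{|z_1z_2|<1\}$. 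The underlying obstruction is that you cannot reduce $|z_1|$ on a set of large area by an area-preserving map of the $z_1$-plane; you must use the transverse directions, and then keeping the map injective while compressing an enormously long thin strip into a region of small $z_1$-area is exactly the hard point, not a gluing technicality. (Your target is also stronger than necessary: containment in $\mathcal H$ is not needed, only smallness of the $4$-volume of the $(z_1,z_2)$-shadow.)

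The paper closes this gap by importing the genuinely nontrivial construction from Guth: Lemma \ref{guth1} (Guth's Main Lemma) embeds $B^4_R$ symplectically into $\Sigma\times\R^2$, where $\Sigma$ is a punctured torus of area one — the topology of $\Sigma$ is what allows an arbitrarily long strip to be wrapped into bounded area injectively, which a fold in a contractible direction cannot achieve — and Lemma \ref{guth2} embeds $B^2_R\times\Sigma$ into $\R^4$ via an immersed picture of $\Sigma$ in the plane corrected by a Hamiltonian shear. Composing $B^6_R\hookrightarrow B^2_R\times B^4_R\to B^2_R\times\Sigma\times\R^2\to\R^6$ gives a shadow of $4$-volume $\pi R^2$, and rescaling finishes the proof. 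So unless you can actually carry out (or cite) a construction of your $\Psi_R$ with the claimed properties — which would require an idea of this wrapping type rather than squeeze-and-fold — the proposal has a genuine gap at its central step. Your observation that $k\le n-1$ provides the transverse direction and $k\ge 2$ prevents the Gromov obstruction is correct and is indeed why the counterexample is possible, but it does not substitute for the construction.
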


Here $B$ denotes the unit ball $B_1\subset \R^{2n}$.
The proof of this second result is based on some elementary but ingenious lemmata from the already mentioned paper of Guth \cite{gut08}. It is contained in Section \ref{2sec}.

Therefore, the middle-dimensional non-squeezing inequality (\ref{middle}) stops holding when passing from linear to nonlinear symplectic maps. However, the counterexample produced in the proof of Theorem 2 deforms the ball tremendously and it is natural to ask where the border of the validity of (\ref{middle}) lies. An interesting question in this respect seems to be: does (\ref{middle}) hold locally?

This question can be formulated in different ways. For instance, one may fix a symplectic diffeomorphism $\phi:\R^{2n}\rightarrow \R^{2n}$ and a point in its domain - without loss of generality the origin - and ask whether (\ref{middle}) holds when $R$ is small enough. 
Or one can fix a smooth path of symplectic diffeomorphisms
\[
\phi_t: \R^{2n} \rightarrow \R^{2n}, \qquad t\in [0,1],
\]
such that $\phi_0 = \Phi$, where $\Phi$ is some linear symplectic automorphism, and ask whether
\begin{equation}
\label{weak}
\mathrm{vol}_{2k} \bigl(P\phi_t (B) \bigr) \geq \omega_{2k} , \qquad \mbox{for every } 0 \leq t \leq t_0,
\end{equation}
for some positive number $t_0\leq 1$. A positive answer to this second question implies a positive answer to the first one, by considering the path of symplectic diffeomorphisms
\begin{equation}
\label{path}
\phi_t (x) := \left\{ \begin{array}{ll} \frac{1}{t} \Bigl( \phi (tx) - \phi(0) \Bigr) & \mbox{if } t\in ]0,1], \\ D\phi(0)x & \mbox{if } t=0. \end{array} \right.
\end{equation}

A first result about the first formulation of the local question is the following:

\begin{prop*}[Generic local non-squeezing]
Let $\phi:\R^{2n}\rightarrow \R^{2n}$ be a symplectic diffeomorphism and let $P$ be the orthogonal projection onto a complex subspace $V\subset \R^{2n}$ of real dimension $2k$, with $1\leq k \leq n$. Then there is an open and dense subset $U\subset \R^{2n}$ with the following property: for every $x\in U$ there exists $R_0=R_0(x)>0$ such that
\[
\vol_{2k} \bigl( P\phi (B_R(x)) \bigr) \geq \omega_{2k} R^{2k},
\]
for every $R\leq R_0$. 
\end{prop*}

We conjecture that one can actually take $U=\R^{2n}$ in the above statement and that the function $R_0$ is bounded away from zero on compact sets (this is why we do not grant the above statement the status of a theorem). The proof uses minimal submanifolds and is contained in Section 3, where we also make some general considerations about the first formulation of the local question and suggest that its positive answer might be related to the integrability of a certain ``multi-valued distribution''.

Further evidence for a positive answer to the local question is given by the next result about the second more general formulation, whose statement needs some preliminaries.

 As before, we denote the image of the orthogonal projector $P$ by $V$, which is still assumed to be a complex linear subspace of $\R^{2n}$. When $\Phi^T V$ is not a complex subspace, Theorem \ref{lns} guarantees that
\[
\mathrm{vol}_{2k} \bigl(P \Phi (B) \bigr) > \omega_{2k},
\]
so (\ref{weak}) holds for small values of $t$ just by continuity. Therefore, the question is non-trivial only when $\Phi^T V$ is a complex subspace, in particular when $\Phi=I$.

We recall that, since $\R^{2n}$ is simply connected, every symplectic path $\phi_t:\R^{2n} \rightarrow \R^{2n}$ is generated by a time-dependent Hamiltonian: There exists a smooth one-parameter family $\{H_t\}_{t\in [0,1]}$ of smooth functions on $\R^{2n}$ such that
\[
\frac{d}{dt} \phi_t(x) = X_{H_t} \bigl(\phi_t(x)\bigr), \qquad \forall (t,x) \in [0,1]\times \R^{2n},
\]
where $X_H$ is the Hamiltonian vector field associated to $H$, which is defined by the identity
\[
\imath_{X_H} \Omega = - dH.
\]
Each function $H_t$ is uniquely determined up to an additive constant. Given a $2\pi$-periodic smooth loop
\[
z: \R/2\pi \Z \rightarrow \R^{2n},
\]
we denote by
\[
E(z) := \frac{1}{2} \int_0^{2\pi} |z'(\theta)|^2\, d\theta
\]
its energy, and by
\[
A(z) := \frac{1}{2} \int_0^{2\pi} \Omega \bigl[ z(\theta), z'(\theta)\bigr]\, d\theta = \int_{\R/2\pi \Z} z^* (\Lambda), \qquad \mbox{with} \quad 
 \Lambda  := \sum_{j=1}^n p_j \, dq_j, 
 \]
the symplectic area of any disc with boundary $z$.

If $V\subset \R^{2n}$ is a complex linear subspace of complex dimension $k$, we denote by $\mathrm{Gr}_1(V)$ the Grassmannian of complex lines in $V$, equipped with the volume form
\[
\mu = \frac{1}{(k-1)!} \omega^{k-1},
\]
where $\omega$ is the standard K\"ahler form on $\mathrm{Gr}_1(V)\cong \C\mathbb{P}^{k-1}$.
Our next result is the following second order expansion for the $2k$-volume of the shadow of the image of the ball by the Hamiltonian flow $\phi_t$:

\begin{thm}[Second order expansion]
\label{expa}
Let $\{\phi_t\}_{t\in [0,1]}$ be a smooth one-parameter family of symplectic diffeomorphisms of $\R^{2n}$ into itself such that $\phi_0 = \Phi$ is linear and let $\{H_t\}_{t\in [0,1]}$ be its generating path of Hamiltonians. Let $P$ be the orthogonal projector onto a complex linear subspace $V\subset \R^{2n}$ of real dimension $2k$, with $1\leq k \leq n$, and assume that also $\Phi^T V$ is a complex subspace. Then
\[
\mathrm{vol}_{2k}\bigl(P \phi_t(B)\bigr) = \omega_{2k} + C \, t^2 + O(t^3), \qquad \mbox{for } t\rightarrow 0,
\]
the number $C$ being defined as
\[
C= C(H_0,\Phi) := \int_{\mathrm{Gr}_1(\Phi^T V)} \bigl( E(\zeta_L) - A(\zeta_L) \bigr)\, \mu (L),
\]
where $\zeta_L:\R/2\pi \Z\rightarrow \R^{2n}$ is the loop
\[
\zeta_L(\theta) = \Phi^* \bigl( (I-P) X_{H_0} \bigr) \bigl(e^{\theta J} \xi_L\bigr),
\]
$\xi_L$ being an arbitrary unit vector in $L\in \mathrm{Gr}_1(\Phi^T V)$.
\end{thm}

Here $\Phi^*( (I-P) X_{H_0})$ denotes the pull-back by $\Phi$ of the vector field $(I-P)X_{H_0}$, that is
\[
\Phi^*( (I-P) X_{H_0}) (x) := \Phi^{-1} (I-P) X_{H_0} ( \Phi x), \qquad \forall x\in \R^{2n},
\]
and $J:\R^{2n} \rightarrow \R^{2n}$ is the multiplication by $i$ in the identification $\R^{2n}\cong \C^n$.
To the best of our knowledge, this result is new also in the case $k=1$ (see \cite{lm95} for other local two-dimensional non-squeezing results). The proof is based on an elaborate series of computations, which make use of the Lie-Cartan formalism and occupy Sections \ref{4sec}, \ref{5sec}, \ref{6sec} and \ref{7sec}.

We can now use the energy-area inequality
\[
A(z) \leq E(z), \qquad \forall z\in C^{\infty}(\R/2\pi\Z,\R^{2n}),
\]
and the fact that the equality holds if and only if the loop $z$ has the form $z(\theta) = z_0 + e^{\theta J} z_1$, for some $z_0$ and $z_1$ in $\R^{2n}$, in order to deduce the following:

\begin{cor*}[Strict local non-squeezing]
Let $\phi_t$, $\Phi$, $H_t$, $P$ and $V$ be as in Theorem \ref{expa}, with $1\leq k \leq n-1$. Assume that the vector field $Z:=\Phi^* ((I-P)X_{H_0})$ does not satisfy the symmetry condition
\begin{equation}
\label{simm}
Z \left( e^{\theta J} x \right) =  \frac{1}{2} \bigl( Z(x)+Z(-x)\bigr)  + \frac{1}{2} e^{\theta J} \bigl( Z  (x) - Z(-x) \bigr) , \quad \forall x\in \partial B \cap \Phi^T V, \; \forall \theta \in \R/2\pi \Z.
\end{equation}
Then there exists $t_0>0$ such that
\begin{equation}
\label{strict}
\mathrm{vol}_{2k} \bigl(P\phi_t (B) \bigr) > \omega_{2k} , \qquad \mbox{for every } 0 < t < t_0.
\end{equation}
\end{cor*}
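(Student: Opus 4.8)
The plan is to obtain \eqref{strict} directly from the second order expansion of Theorem \ref{expa}, once we show that the hypothesis forces the constant $C=C(H_0,\Phi)$ to be strictly positive. By the energy-area inequality $A(\zeta_L)\le E(\zeta_L)$, every integrand in the formula for $C$ is nonnegative, so $C\ge 0$ unconditionally; the whole content of the corollary is to upgrade this to $C>0$ when $Z=\Phi^*((I-P)X_{H_0})$ violates \eqref{simm}, and then to read off the sign of $\mathrm{vol}_{2k}(P\phi_t(B))-\omega_{2k}$ for small $t>0$.

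The first and only delicate step is to identify the equality case of the energy-area inequality, applied to the specific loops $\zeta_L(\theta)=Z(e^{\theta J}\xi_L)$, with the pointwise symmetry condition \eqref{simm}. We use that $\Phi^T V$ is a complex subspace, hence invariant under every $e^{\theta J}$, so that $\theta\mapsto e^{\theta J}\xi_L$ is a loop in $\partial B\cap\Phi^T V$; in particular $\zeta_L$ is a smooth loop and \eqref{simm} at $x=\xi_L$ makes sense. By the quoted rigidity statement, $E(\zeta_L)=A(\zeta_L)$ if and only if $\zeta_L(\theta)=z_0+e^{\theta J}z_1$ for some $z_0,z_1\in\R^{2n}$. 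Evaluating this at $\theta=0$ and $\theta=\pi$, where $e^{\pi J}=-I$, forces $z_0=\tfrac12\bigl(Z(\xi_L)+Z(-\xi_L)\bigr)$ and $z_1=\tfrac12\bigl(Z(\xi_L)-Z(-\xi_L)\bigr)$, and substituting back one sees that the condition $\zeta_L(\theta)=z_0+e^{\theta J}z_1$ for all $\theta$ is exactly \eqref{simm} at $x=\xi_L$. Consequently, if $Z$ fails \eqref{simm} at a point $x_0\in\partial B\cap\Phi^T V$, then, taking $L_0$ to be the complex line spanned by $x_0$ and $\xi_{L_0}=x_0$, we get $E(\zeta_{L_0})-A(\zeta_{L_0})>0$.

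Next I would promote this single strictly positive value to $C>0$. Since $\{\phi_t\}$ is smooth and $\Phi$ is a linear isomorphism, $Z$ is smooth; moreover, changing the unit vector $\xi_L$ only shifts the parameter of $\zeta_L$, under which both $E$ and $A$ are invariant, so $L\mapsto E(\zeta_L)-A(\zeta_L)$ is a well-defined continuous nonnegative function on $\mathrm{Gr}_1(\Phi^T V)$. Being positive at $L_0$, it is positive on a neighborhood, which has positive $\mu$-measure because $\mu$ is a volume form; hence $C=\int_{\mathrm{Gr}_1(\Phi^T V)}\bigl(E(\zeta_L)-A(\zeta_L)\bigr)\,\mu(L)>0$.

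Finally, Theorem \ref{expa} gives $\mathrm{vol}_{2k}(P\phi_t(B))=\omega_{2k}+Ct^2+O(t^3)=\omega_{2k}+t^2\bigl(C+O(t)\bigr)$ as $t\to0$, so there is $t_0>0$ with $C+O(t)>0$ for $0<t<t_0$, which is \eqref{strict}. The main obstacle is entirely contained in the second paragraph: matching the algebraic form of the equality locus of the energy-area inequality with \eqref{simm}; the rest is a routine continuity argument together with the Taylor expansion already provided by Theorem \ref{expa}.
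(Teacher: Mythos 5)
Your proof is correct and follows essentially the same route as the paper: apply the expansion of Theorem \ref{expa}, use the energy--area inequality of Lemma \ref{poi} together with its rigidity case to identify vanishing of each integrand $E(\zeta_L)-A(\zeta_L)$ with the symmetry condition (\ref{simm}) at $x=\xi_L$, and conclude $C(H_0,\Phi)>0$ when (\ref{simm}) fails. The only difference is that you spell out explicitly the continuity/positive-measure argument showing that a single strictly positive value of the integrand forces $C>0$, which the paper leaves implicit.
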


Therefore, the second formulation of the local non-squeezing statement holds for every middle-dimension, at least if the Hamiltonian vector field $X_{H_0}$ is not so symmetric that (\ref{simm}) holds. When (\ref{simm}) holds, then $C(H_0,\Phi)$ in Theorem \ref{expa} vanishes and we cannot expect the strict inequality (\ref{strict}) to be true (for instance, $\phi_t$ could be the identity for every $t$), but nevertheless it seems natural to conjecture that  there exists $t_0>0$ such that the weak inequality (\ref{weak}) holds. Actually, a positive answer to a conjecture of C.~Viterbo's \cite{vit00} about the relationship between the volume and the symplectic capacity of convex domains in $\R^{2n}$ would imply the above conjecture. See the end of Section \ref{7sec} for more details.

We should also mention that, since the local compactness of $\R^{2n}$ does not play any role in our arguments, all the results of this paper extend to the case of symplectic diffeomorphisms on an infinite dimensional real Hilbert space $\mathbb{H}$ which is equipped with a strong symplectic structure, that is of a continuous anti-symmetric bilinear form $\Omega:\mathbb{H} \times \mathbb{H}\rightarrow \R$ such that the anti-symmetric linear operator $J:\mathbb{H} \rightarrow \mathbb{H}$  which represents $\Omega$ with respect to the inner product, that is
\[
\Omega [u,v] = (Ju,v), \qquad \forall u,v\in \mathbb{H},
\]
is an isomorphism. In particular, the standard non-squeezing theorem which S.\ Kuksin \cite{kuk95} generalized to compact perturbations of linear operators, holds also for small perturbations of bounded linear symplectic operators on $\mathbb{H}$, again assuming that the symmetry condition (\ref{simm}) does not hold. 

\paragraph{Acknowledgements.}
This paper was influenced by stimulating discussions with Pietro Majer, Felix Schlenk and Juan Carlos \'Alvarez Paiva. We wish to thank also Ivar Ekeland and Helmut Hofer for some interesting suggestions about the local question, Marco Abate for his advice concerning the proof of Lemma \ref{red}, Camillo De Lellis, Anton Petrunin and Vitali Kapovitch for precious bibliographical suggestions. The use of the term ``shadow'' was suggested to us by Alexander Fel'shtyn. 

\section{Linear non-squeezing}
\label{1sec}

We start by recalling the formulas for the volume of the image of the ball by a linear surjection. We denote by $B$ the open unit ball about $0$ in $\R^n$. 

Let $n\geq k$ be positive integers and let $A:\R^n \rightarrow \R^k$ be linear and onto. We denote by $A^T : \R^k \rightarrow \R^n$ the adjoint of $A$ with respect to the Euclidean inner products. The linear mapping $A^T A:\R^n \rightarrow \R^n$ is symmetric and positive semi-definite, with $k$-codimensional kernel 
\[
\ker A^T A = \ker A = (\ran A^T)^{\perp}.
\]
In particular, $A^TA$ restricts to an automorphism of the $k$-dimensional space $(\ker A)^{\perp} = \ran A^T$ and, since this restriction is the composition of the two isomorphisms
\[
A|_{(\ker A)^{\perp}} : (\ker A)^{\perp} \rightarrow \R^k, \quad A^T : \R^k \rightarrow (\ker A)^{\perp},
\]
which have the same determinant, being one the adjoint of the other, we deduce that
\[
\det \left( A^T A|_{(\ker A)^{\perp}} \right) = \Bigl| \det \left( A|_{(\ker A)^{\perp}} \right)\Bigr|^2.
\]
Here, the absolute value of the determinant of linear maps between different subspaces of the same dimension is induced by the Euclidean inner product (since linear subspaces do not have a preferred orientation, the determinant is defined up to the sign).
Let $\xi_1,\dots,\xi_k$ be a basis of $(\ker A)^{\perp}$ with
\[
|\xi_1 \wedge \dots \wedge \xi_k | = 1,
\]
where the Euclidean norm of $\R^n$ is extended to multi-vectors in the standard way (in particular, $|\xi_1 \wedge \dots \wedge \xi_k |$ is the $k$-volume of the prism generated by $\xi_1,\dots,\xi_k$). Since 
\begin{equation}
\label{inpiu}
A(B) = A(B\cap (\ker A)^{\perp})= A(B\cap \ran A^T),
\end{equation} 
we find
\begin{equation}
\label{Volume}
\frac{\vol_k\bigl(A(B)\bigr)}{\omega_k}  = |A\xi_1 \wedge \cdots \wedge A\xi_k| =  |\det \left( A|_{(\ker A)^{\perp}} \right)| =
\sqrt{\det (A^TA|_{(\ker A)^{\perp}})},
\end{equation}
where $\omega_k$ denotes the $k$-volume of the unit $k$-ball.
Furthermore, the real valued function
\[
W \mapsto \bigl|\det A|_W\bigr| , \quad W \in \mathrm{Gr}_k(\R^n),
\]
where $\mathrm{Gr}_k(\R^n)$ denotes the Grassmannian of $k$-dimensional subspaces of $\R^n$, has a unique maximum at $(\ker A)^{\perp}= \ran A^T$, hence
\begin{equation}
\label{maxi}
\max_{W\in \mathrm{Gr}_k(\R^n)} \bigl|\det A|_W\bigr| = \bigl|\det A|_{\ran A^T} \bigr| .
\end{equation}

\medskip

Let $\R^{2n}$ be the $2n$-dimensional Euclidean space endowed with coordinates 
\[
(p_1,q_1,\dots,p_n,q_n),
\]
with the complex structure $J$ corresponding to the identification 
\[
(p_1,q_1,\dots, p_n,q_n) \equiv (p_1+iq_1, \dots,p_n+i q_n),
\]
that is
\[
J (p_1,q_1,\dots, p_n,q_n) = (-q_1,p_1,\dots,-q_n,p_n),
\]
and with the symplectic form given by minus the imaginary part of the corresponding Hermitian product, that is 
\[
\Omega = \sum_{j=1}^n dp_j \wedge dq_j.
\]
H.\ Federer refers to the next result as to the Wirtinger inequality, see \cite[section 1.8.1]{fed69}.

\begin{lem}
\label{comass}
Let $1\leq k\leq n$ and let $\Omega^k$ be the $k$-times wedge product of $\Omega$ by itself. Then 
\[
\bigl| \Omega^k [u_1,\dots,u_{2k} ] \bigr| \leq k! \, |u_1 \wedge \cdots \wedge u_{2k}|, \quad \forall u_1,\dots,u_{2k} \in \R^{2n},
\]
and, in the non-trivial case of linearly independent vectors $u_j$, the equality holds if and only if the $u_j$'s span a complex subspace.
\end{lem}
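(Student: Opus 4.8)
The plan is to reduce everything to a metric-adapted normal form for the restriction of $\Omega$ to the plane spanned by the vectors. First I dispose of the degenerate case: if $u_1,\dots,u_{2k}$ are linearly dependent, both sides vanish, so I may assume they form a basis of a $2k$-dimensional subspace $W\subset\R^{2n}$. Next I observe that the ratio
\[
\frac{\bigl|\Omega^k[u_1,\dots,u_{2k}]\bigr|}{|u_1\wedge\cdots\wedge u_{2k}|}
\]
depends only on $W$ and not on the chosen basis: under a change of basis with transition matrix $M$ the numerator is multiplied by $|\det M|$ (since $\Omega^k[\,\cdot\,,\dots,\cdot\,]$ is alternating $2k$-linear) and the denominator is multiplied by the same factor $|\det M|$. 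Hence it suffices to prove the inequality when $u_1,\dots,u_{2k}$ is an \emph{orthonormal} basis of $W$, in which case the denominator equals $1$, and to show that $\bigl|\Omega^k[u_1,\dots,u_{2k}]\bigr|\le k!$ with equality if and only if $W$ is complex.

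The next step is to bring $\Omega|_W$ into normal form. With respect to the orthonormal frame, $\Omega|_W$ is represented by the skew-symmetric matrix $S_{ij}=\Omega[u_i,u_j]=(Ju_i,u_j)$, and the spectral theorem for real skew-symmetric matrices provides an orthogonal change of basis of $W$ yielding a new orthonormal frame $e_1,f_1,\dots,e_k,f_k$ in which
\[
\Omega|_W=\sum_{j=1}^k\lambda_j\,e_j^*\wedge f_j^*,\qquad \lambda_j\ge 0 .
\]
Taking the $k$-th exterior power gives $\Omega^k|_W=k!\,\lambda_1\cdots\lambda_k\;e_1^*\wedge f_1^*\wedge\cdots\wedge e_k^*\wedge f_k^*$, so $\bigl|\Omega^k[u_1,\dots,u_{2k}]\bigr|=\bigl|\Omega^k[e_1,f_1,\dots,e_k,f_k]\bigr|=k!\,\lambda_1\cdots\lambda_k$ (the two evaluations differ by the sign of the orthogonal transition matrix, which is irrelevant after taking absolute values).

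It then remains to estimate $\lambda_1\cdots\lambda_k$, and here is the only place where the geometry really enters: $J$ is an isometry, so for each $j$
\[
\lambda_j=\Omega[e_j,f_j]=(Je_j,f_j)\le|Je_j|\,|f_j|=1
\]
by Cauchy--Schwarz, since $e_j$ and $f_j$ are unit vectors. Therefore $\lambda_1\cdots\lambda_k\le 1$, which is the claimed inequality. For the equality case, $\lambda_1\cdots\lambda_k=1$ together with $0\le\lambda_j\le 1$ forces $\lambda_j=1$ for every $j$; the equality case of Cauchy--Schwarz then gives $f_j=Je_j$ for every $j$, so $W=\Span\{e_1,Je_1,\dots,e_k,Je_k\}$ is $J$-invariant, i.e.\ complex. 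Conversely, if $W$ is complex one may choose a Hermitian-orthonormal complex basis $e_1,\dots,e_k$, take $f_j=Je_j$, and check that $\Omega|_W=\sum_j e_j^*\wedge f_j^*$ in these orthonormal real coordinates, so all $\lambda_j=1$ and equality holds.

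I expect no serious obstacle in this argument; the only point requiring some care is the bookkeeping of orientations and signs — linear subspaces carry no preferred orientation, so every determinant and every evaluation of $\Omega^k$ is defined only up to sign — but this is harmless because the statement is phrased throughout in terms of the absolute values $\bigl|\Omega^k[u_1,\dots,u_{2k}]\bigr|$ and $|u_1\wedge\cdots\wedge u_{2k}|$. The slightly less routine ingredient is the simultaneous "Darboux-type" orthonormal normalization of $\Omega|_W$, which is precisely the real skew-symmetric spectral theorem; everything else is Cauchy--Schwarz and multilinearity.
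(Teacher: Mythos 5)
Your argument is correct and complete: the reduction to an orthonormal basis via the basis-independence of the ratio, the orthogonal normal form $\Omega|_W=\sum_j\lambda_j\,e_j^*\wedge f_j^*$ with $\lambda_j\ge 0$, the identity $(\Omega|_W)^k=k!\,\lambda_1\cdots\lambda_k\,e_1^*\wedge f_1^*\wedge\cdots\wedge e_k^*\wedge f_k^*$, the Cauchy--Schwarz bound $\lambda_j=(Je_j,f_j)\le 1$, and the equality analysis (all $\lambda_j=1$, hence $f_j=Je_j$ and $W$ is $J$-invariant, with the easy converse) all check out. Note that the paper does not prove this lemma at all; it simply cites it as Wirtinger's inequality from Federer's book, so there is no internal proof to compare with --- your normal-form-plus-Cauchy--Schwarz argument is the classical proof of that cited result, and it is exactly the kind of self-contained justification one would insert here.
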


We are now ready to prove the linear non-squeezing result:

\setcounter{thm}{0}

\begin{thm}[Linear non-squeezing]
Let $\Phi$ be a linear symplectic automorphism of $\R^{2n}$, and let $P: \R^{2n} \rightarrow \R^{2n}$ be the orthogonal projector onto a complex linear subspace $V\subset \R^{2n}$ of real dimension $2k$, $1\leq k\leq n$. Then
\[
\vol_{2k} \bigl( P \Phi (B_R) \bigr) \geq \omega_{2k} R^{2k},
\]
and the equality holds if and only if the linear subspace $\Phi^T V$ is complex.
\end{thm}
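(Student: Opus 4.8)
The plan is to first reduce to the case $R=1$ by homogeneity (since $P\Phi(B_R)=R\,P\Phi(B)$ and $\vol_{2k}$ scales by $R^{2k}$), and then to trade the desired lower bound on the shadow for an upper bound on a section, using polar duality. Put $E:=\Phi(\overline{B})$, an ellipsoid centered at the origin. Its orthogonal shadow $PE\subset V$ is again an ellipsoid, and one checks directly that its polar dual inside $V$ is exactly $V\cap E^{*}$, where $E^{*}:=\{y\in\R^{2n}:\langle x,y\rangle\le 1\ \forall x\in E\}$ is the polar of $E$ in $\R^{2n}$; moreover $E^{*}=\Phi^{-T}(\overline{B})$. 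Since $\vol_{2k}(\mathcal{E})\cdot\vol_{2k}(\mathcal{E}^{*})=\omega_{2k}^{2}$ for every ellipsoid $\mathcal{E}$ centered at the origin in a $2k$-dimensional Euclidean space, this yields the identity
\[
\vol_{2k}\bigl(P\Phi(B)\bigr)\cdot\vol_{2k}\bigl(V\cap\Phi^{-T}(B)\bigr)=\omega_{2k}^{2}.
\]

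Next I would estimate the section. The transpose, hence the inverse transpose, of a symplectic matrix is symplectic (from $\Phi^{-1}=-J\Phi^{T}J$ one gets $\Phi J\Phi^{T}=J$), so $\Phi^{-T}$ is symplectic, and it suffices to prove: for every symplectic automorphism $\Psi$ of $\R^{2n}$ and every complex subspace $V$ of real dimension $2k$,
\[
\vol_{2k}\bigl(V\cap\Psi(B)\bigr)\le\omega_{2k},
\]
with equality if and only if $\Psi^{-1}V$ is complex. Write $W:=\Psi^{-1}V$; then $V\cap\Psi(B)=\Psi(W\cap B)$, so $\vol_{2k}(V\cap\Psi(B))=\omega_{2k}\,\bigl|\det(\Psi|_{W})\bigr|$. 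Pick an orthonormal basis $u_{1},\dots,u_{2k}$ of $W$. Then $\bigl|\det(\Psi|_{W})\bigr|=\bigl|\Psi u_{1}\wedge\cdots\wedge\Psi u_{2k}\bigr|$, and since the vectors $\Psi u_{j}$ span the complex subspace $V$, the equality case of Lemma \ref{comass} gives $\bigl|\Psi u_{1}\wedge\cdots\wedge\Psi u_{2k}\bigr|=\tfrac{1}{k!}\bigl|\Omega^{k}[\Psi u_{1},\dots,\Psi u_{2k}]\bigr|$; this equals $\tfrac{1}{k!}\bigl|\Omega^{k}[u_{1},\dots,u_{2k}]\bigr|$ because $\Psi^{*}\Omega=\Omega$, and the inequality in Lemma \ref{comass} bounds it by $\bigl|u_{1}\wedge\cdots\wedge u_{2k}\bigr|=1$, with equality precisely when $u_{1},\dots,u_{2k}$ span a complex subspace, i.e. when $W=\Psi^{-1}V$ is complex.

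Finally I would combine the two steps. Applying the section estimate to $\Psi=\Phi^{-T}$, and noting $(\Phi^{-T})^{-1}V=\Phi^{T}V$, yields $\vol_{2k}(V\cap\Phi^{-T}(B))\le\omega_{2k}$, with equality iff $\Phi^{T}V$ is complex. Dividing the displayed duality identity by this inequality gives $\vol_{2k}(P\Phi(B))\ge\omega_{2k}$, with equality iff $\Phi^{T}V$ is complex, and rescaling recovers the statement for arbitrary $R$.

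I expect the main obstacle to be the duality step of the first paragraph: one must verify carefully that the shadow of $E$ and the section $V\cap E^{*}$ are genuinely polar-dual bodies inside $V$ (so that their volumes multiply to $\omega_{2k}^{2}$ precisely because both are ellipsoids), and identify $E^{*}=\Phi^{-T}(\overline{B})$ with $\Phi^{-T}$ symplectic; after that the only genuine inequality used is the Wirtinger estimate of Lemma \ref{comass}, and one merely has to track its equality case. An alternative avoiding polarity: by (\ref{Volume}) one has $\vol_{2k}(P\Phi(B))^{2}=\omega_{2k}^{2}\det(S_{11})$ with $S:=\Phi\Phi^{T}$ symmetric positive definite and symplectic and $S_{11}$ the compression of $S$ to $V$; from $S^{-1}=-JSJ$ and the $J$-invariance of $V$ and $V^{\perp}$ one gets $\det\bigl((S^{-1})_{11}\bigr)=\det(S_{11})$, while the Schur-complement identity $(S^{-1})_{11}=\bigl(S_{11}-S_{12}S_{22}^{-1}S_{12}^{T}\bigr)^{-1}$ together with $S_{11}-S_{12}S_{22}^{-1}S_{12}^{T}\le S_{11}$ forces $\det(S_{11})^{2}\ge 1$, with equality iff $S_{12}=0$, i.e. iff $\Phi\Phi^{T}V=V$, which in turn is equivalent to $\Phi^{T}V$ being complex.
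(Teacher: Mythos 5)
Your argument is correct, but it is organized quite differently from the paper's. The paper works directly with the surjection $A=P\Phi$: choosing a unit multivector basis of $\ran A^T=\Phi^T V$, it writes $\bigl(\vol_{2k}(P\Phi(B))/\omega_{2k}\bigr)^2=\det\bigl(A^TA|_{(\ker A)^{\perp}}\bigr)=|A^TA\xi_1\wedge\cdots\wedge A^TA\xi_{2k}|$, bounds this below by $\tfrac{1}{k!}\bigl|\Omega^k[A^TA\xi_1,\dots,A^TA\xi_{2k}]\bigr|$ via the Wirtinger inequality (Lemma \ref{comass}), and then uses symplectic invariance of $\Omega^k$ and the fact that $\Omega^k$ restricts to $k!$ times the volume form on $V$ to recognize this lower bound as $\vol_{2k}(P\Phi(B))/\omega_{2k}$ itself; the inequality $x^2\geq x$ then gives the theorem, with the equality case coming from the Wirtinger equality case applied to the span $\Phi^T V$ of the $A^TA\xi_j$. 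You instead pass through polar duality: the orthogonal shadow $P\Phi(\overline{B})$ and the complex section $V\cap\Phi^{-T}(\overline{B})$ are polar-dual ellipsoids in $V$ (your identifications $(PK)^{\circ}\cap$-duality and $E^*=\Phi^{-T}(\overline{B})$ are both correct), so their volumes multiply to $\omega_{2k}^2$, and the theorem reduces to the section bound $\vol_{2k}(V\cap\Psi(B))\leq\omega_{2k}$ for symplectic $\Psi$ with equality iff $\Psi^{-1}V$ is complex --- which is exactly the content of the paper's Remark \ref{sections}, proved there by a change of variables with $\Omega^k$ and by you via the two applications of Lemma \ref{comass}; your equality bookkeeping, including $(\Phi^{-T})^{-1}V=\Phi^T V$, is accurate. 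What your route buys is a conceptual explanation of why the theorem and Remark \ref{sections} are two faces of the same fact (shadow large precisely because the dual section is small), at the cost of importing the Santal\'o identity for centered ellipsoids; the paper's route is shorter and needs only formula (\ref{Volume}) and Wirtinger. Your Schur-complement alternative is also correct (the identification $\vol^2/\omega_{2k}^2=\det(S_{11})$ with $S=\Phi\Phi^T$, the relation $\det\bigl((S^{-1})_{11}\bigr)=\det(S_{11})$ from $S^{-1}=-JSJ$ and $J$-invariance of $V$, and the equivalence $S_{12}=0\Leftrightarrow\Phi\Phi^TV=V\Leftrightarrow\Phi^TV$ complex, via $J\Phi^T=\Phi^{-1}J$, all check out) and is arguably the most elementary of the three, since it avoids Wirtinger entirely; it would be worth writing out the equivalence of the equality conditions explicitly if you adopt it.
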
 

\begin{proof}
By linearity, we may assume $R=1$. We consider the linear surjection
\[
A := P \Phi : \R^{2n} \rightarrow V.
\]
As before, let $\xi_1,\dots,\xi_{2k}$ be a basis of $(\ker A)^{\perp} = \ran A^T = \Phi^T V$ such that
\[
|\xi_1 \wedge \cdots \wedge \xi_{2k} | = 1.
\]
By the identity (\ref{Volume}) and Lemma \ref{comass},
\begin{equation}
\label{uno}
\begin{split}
\left( \frac{\vol_{2k} \bigl(P\Phi( B)\bigr)}{\omega_{2k}} \right)^2  = \det (A^T A|_{(\ker A)^{\perp}}) = |A^T A \xi_1 \wedge \cdots  \wedge A^T A\xi_{2k}| \\ \geq \frac{1}{k!} \bigl| \Omega^k [A^T A \xi_1,\dots,A^T A \xi_{2k} ] \bigr| = \frac{1}{k!} \bigl| \Omega^k [\Phi^T A \xi_1,\dots,\Phi^T A \xi_{2k} ] \bigr|,
\end{split} \end{equation}
and the equality holds if and only if the subspace spanned by $A^T A \xi_1, \dots, A^T A\xi_{2k}$, that is $\Phi^T V$, is complex.
Since $\Phi$ is symplectic, so is $\Phi^T$, hence
\begin{equation}
\label{due}
\Omega^k [\Phi^T A \xi_1,\dots,\Phi^T A \xi_{2k} ] = \Omega^k [A \xi_1,\dots,A \xi_{2k} ].
\end{equation}
Since the restriction of $\Omega^k$ to the complex subspace $V$ is $k!$-times the standard volume form, we have
\begin{equation}
\label{tre}
\frac{1}{k!} \bigl| \Omega^k [A \xi_1,\dots,A \xi_{2k} ] \bigr| = |A \xi_1 \wedge \cdots \wedge A\xi_{2k}| = \frac{\vol_{2k} \bigl(A(B)\bigr)}{\omega_{2k} } = \frac{\vol_{2k} \bigl(P \Phi (B)\bigr)}{\omega_{2k}},
\end{equation}
where we have used again (\ref{Volume}). By (\ref{uno}), (\ref{due}), and (\ref{tre}) we conclude that
\begin{equation}
\label{linnonsque}
\vol_{2k} (P \Phi (B)) \geq  \omega_{2k},
\end{equation}
and that the equality holds if and only if the linear subspace $\Phi^T V$ is complex.
\end{proof} 

\begin{rem}
\label{sections}
Unlike orthogonal projections onto complex subspaces, complex sections of the image of the unit ball $B$ by a linear symplectic automorphism $\Phi$ have small $2k$-volume: if $V\subset \R^{2n}$ is a complex linear subspace of real dimension $2k$, then
\[
\mathrm{vol}_{2k} \bigl( V \cap \Phi (B) \bigr) \leq \omega_{2k},
\]
and the equality holds if and only if the subspace $\Phi^{-1} V$ is complex. Indeed, since the restriction of $\Omega^k$ to $V$ is $k!$-times the Euclidean volume form, we have
\begin{eqnarray*}
\mathrm{vol}_{2k} \bigl( V \cap \Phi (B) \bigr) &=& \frac{1}{k!} \int_{V\cap \Phi (B)} \Omega^k = \frac{1}{k!} \int_{B \cap \Phi^{-1} V} \Phi^* \Omega^k = \frac{1}{k!} \int_{B \cap \Phi^{-1} V}  \Omega^k \\ &\leq& \mathrm{vol}_{2k} \bigl( B \cap \Phi^{-1} V  \bigr) = \omega_{2k},
\end{eqnarray*}
where the inequality is an equality if and only if the restriction of $\Omega^k$ to $\Phi^{-1} V$ coincides with the Euclidean volume, that is if and only if $\Phi^{-1} V$ is complex.
\end{rem}

\section{Nonlinear squeezing}
\label{2sec}

In this section we consider balls in spaces of different dimension, hence we adopt the notation $B_R^n$ for the open ball of radius $R$ about the origin in $\R^n$ and we omit the index $R$ when the radius is 1, $B^n := B^n_1$.

We denote by $\Sigma$ the punctured torus $\T^2 \setminus \{\mbox{pt}\}$ equipped with a symplectic form of area one. The following lemma is due to L.\ Guth \cite[Section 2, Main Lemma]{gut08}:

\begin{lem}
\label{guth1}
For every $R>0$ there exists a smooth symplectic embedding of $B^4_R$ into $\Sigma \times \R^2$.
\end{lem}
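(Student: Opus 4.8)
The plan is to construct the embedding explicitly as a composition of area-preserving maps in two dimensions, exploiting the fact that $\mathbb{R}^2$ has infinite area while $B^4_R$ is bounded. First I would recall that symplectomorphisms of $4$-dimensional domains are easy to build from the product structure: if $f: U \to \Sigma$ and $g: W \to \mathbb{R}^2$ are area-preserving embeddings of planar domains, then $f \times g$ is a symplectic embedding of $U \times W$ into $\Sigma \times \mathbb{R}^2$. So the goal reduces to finding a symplectic embedding of $B^4_R$ into a product $U \times W$ where $U$ admits an area-preserving embedding into $\Sigma$ (which has area one, so $U$ must have area less than one) and $W$ is an arbitrary bounded planar domain.

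The key step is a \emph{symplectic folding} or \emph{wrapping} construction. Writing $\mathbb{R}^4$ with coordinates $(p_1,q_1,p_2,q_2)$, the ball $B^4_R$ projects onto the disc $B^2_R$ in the $(p_1,q_1)$-plane, which has area $\pi R^2$, potentially much larger than one. I would use a compactly supported Hamiltonian isotopy in a neighbourhood of $B^4_R$ that "rolls up" the $(p_1,q_1)$-disc: cut $B^2_R$ into thin vertical strips, and use the extra room in the $(p_2,q_2)$-plane to stack these strips at different heights in the second plane while each strip occupies only a narrow sliver of the first plane. Concretely, one writes down an area-preserving map $(p_1,q_1) \mapsto (\tilde p_1, \tilde q_1, \tilde p_2, \tilde q_2)$ twisted by the value of $q_1$ so that the image of $B^4_R$ lies in $\{|\tilde p_1| + |\tilde q_1| < \delta\} \times \{ |\tilde p_2|,|\tilde q_2| < C(R,\delta)\}$ for $\delta$ as small as we like; this is exactly the type of elementary but ingenious construction in Guth \cite{gut08}. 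The thin set $\{|\tilde p_1| + |\tilde q_1| < \delta\}$ has area $2\delta^2 < 1$ for small $\delta$, and by Moser's theorem any such planar domain of area less than one embeds area-preservingly into $\Sigma$. Composing, we obtain the desired symplectic embedding of $B^4_R$ into $\Sigma \times \mathbb{R}^2$.

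The main obstacle is making the folding map genuinely smooth and symplectic rather than merely piecewise area-preserving: the naive "cut into strips" picture is only measurably area-preserving, so one must realize it as (or approximate it by) the time-one map of a smooth Hamiltonian flow, paying careful attention to the regions where strips are bent and reattached. This is where the "elementary but ingenious lemmata" of Guth do the real work; rather than reproving them I would quote the relevant statement from \cite[Section 2]{gut08} and simply check that it supplies a smooth symplectic embedding of $B^4_R$ into a set of the form $D \times \mathbb{R}^2$ with $\mathrm{area}(D) < 1$. One further detail to verify is that the target can be taken to be $\Sigma \times \mathbb{R}^2$ and not just $\Sigma \times B^2_{C}$ for some large $C$; this is immediate since $B^2_C \subset \mathbb{R}^2$.

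A remark on why the dimension is exactly four: the construction genuinely uses that after squeezing the first factor we still have a full two-dimensional factor $\mathbb{R}^2$ left over to absorb the volume, and that $\Sigma$ is two-dimensional with the homological room (genus one, one puncture) needed for Moser's theorem to apply to a domain of area slightly less than one. In higher-dimensional ambient spaces the analogous lemma is obtained by taking products of this one with identity maps, which is precisely how it will be used to deduce Theorem \ref{squeez}.
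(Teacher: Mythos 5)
The paper does not prove this lemma at all: it is quoted verbatim as Guth's Main Lemma from \cite[Section 2]{gut08}, so ``cite Guth'' is, strictly speaking, exactly what the paper does. The problem is that what you propose to extract from Guth is not what his lemma says, and the intermediate statement you want to check is false. You claim that Guth's construction yields a symplectic embedding of $B^4_R$ into $D\times\R^2$ where $D$ is a \emph{planar} domain of area less than one (in your sketch, the thin diamond $\{|\tilde p_1|+|\tilde q_1|<\delta\}$), after which Moser's theorem would transplant $D$ into $\Sigma$. But the diamond is contained in a disc of radius $\delta$, so such an embedding would place $B^4_R$ inside the cylinder $Z_\delta$ with $\delta<R$, contradicting Gromov's non-squeezing theorem --- the very theorem this paper is built around. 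The same objection kills the ``cut into thin vertical strips and stack them in the second factor'' picture: any construction whose output projects injectively (even just smoothly and symplectically) into a small simply connected planar region in the first factor is squeezing, and is impossible. No compactly supported Hamiltonian isotopy can achieve it, smooth or not.

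The point you are missing is that the genus of $\Sigma$ is essential to the embedding itself, not to the Moser step. Guth's construction does not shrink the first two-dimensional factor; it wraps the ball around the handle of the punctured torus so that the projection to $\Sigma$ covers it with high multiplicity (the image has small area only because of this non-injectivity of the projection, which is not available for a planar target), while the $\R^2$ factor absorbs the rest. Your closing remark that the puncture and genus are ``needed for Moser's theorem to apply'' reverses this logic: Moser is unproblematic, and a small-area simply connected domain would embed in a disc --- which is exactly why your reduction cannot work. The viable options are either to quote Guth's Main Lemma as stated (embedding of $B^4_R$ directly into $\Sigma\times\R^2$), as the paper does, or to reproduce his wrapping construction; the factorization through a small-area planar domain is not a correct reading of \cite{gut08} and cannot be repaired.
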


The next lemma is a simple modification of Lemma 3.1 in \cite{gut08} (where an embedding with extra properties is constructed for $R<1/10$):

\begin{lem}
\label{guth2}
For every $R>0$ there exists a smooth symplectic embedding of $B^2_R \times \Sigma$ into $\R^4$.
\end{lem}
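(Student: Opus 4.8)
The statement to prove is \textbf{Lemma \ref{guth2}}: for every $R>0$ there exists a smooth symplectic embedding of $B^2_R\times\Sigma$ into $\R^4$, where $\Sigma$ is the punctured torus $\T^2\setminus\{\mathrm{pt}\}$ with a symplectic form of total area one.

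\medskip

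The plan is to build the embedding in two stages, first embedding $\Sigma$ symplectically (up to a rescaling of its area) into a thin annular region of $\R^2$, and then taking the product with the disc factor $B^2_R$. First I would recall the elementary fact that the punctured torus $\Sigma$ is diffeomorphic to an open handlebody, i.e.\ to a small open neighbourhood in $\R^2$ of a figure-eight (wedge of two circles): indeed $\T^2\setminus\{\mathrm{pt}\}$ retracts onto the wedge of the two standard generating loops, and is an oriented surface of genus one with one end, hence an open subset of $\R^2$. Pick once and for all such a diffeomorphism $g:\Sigma\to G$, where $G\subset\R^2$ is an open bounded subset diffeomorphic to $\Sigma$; by Moser's theorem (deformation of volume forms on an open surface, as used already in the Introduction of the paper, see \cite{mos65}, \cite{hz94}), after composing $g$ with an area-preserving diffeomorphism we may arrange that $g$ pushes the given area-one symplectic form on $\Sigma$ forward to the standard area form $dp\wedge dq$ on $G$, \emph{provided} the Euclidean area of $G$ equals one. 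The only subtlety here is a matching-of-ends condition: $\Sigma$ has finite total area one and one puncture, so we must choose $G$ to be a bounded open set of area exactly one whose single end ``looks like'' the end of $\Sigma$; this is arranged by first scaling $G$ to have area one and then applying Greene–Shiohama / Moser for open surfaces, which allows prescribing the volume form as long as the total (possibly infinite) volumes agree and the forms agree near the ends in the sense of having the same mass there — here both ends have finite mass, so one can first make the two forms agree near the end by a diffeomorphism supported near the end and then use compactly supported Moser in the interior.

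\medskip

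Once we have a symplectic embedding $g:(\Sigma,\sigma)\hookrightarrow(\R^2,dp_2\wedge dq_2)$ with image a bounded set $G$ of area one, the product map
\[
\phi := \mathrm{id}_{B^2_R}\times g \;:\; B^2_R\times\Sigma \longrightarrow \R^2\times\R^2=\R^4
\]
is a smooth embedding, and it is symplectic because the product symplectic form on $B^2_R\times\Sigma$ is $dp_1\wedge dq_1\oplus\sigma$ and $\phi$ pulls back $dp_1\wedge dq_1\oplus dp_2\wedge dq_2=\Omega$ on $\R^4$ to exactly this form: on the first factor it is the identity and on the second factor $g^*(dp_2\wedge dq_2)=\sigma$ by construction. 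This produces the desired embedding into $\R^4$, with no constraint at all on $R$ — the disc $B^2_R$ embeds into $\R^2$ by the identity regardless of its radius, which is why, unlike the ``extra properties'' version (Lemma 3.1 of \cite{gut08} requiring $R<1/10$), we get the statement for all $R>0$.

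\medskip

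I expect the main obstacle to be the honest verification that $\Sigma$ embeds symplectically into $\R^2$ after matching the area, i.e.\ the open-surface version of Moser's theorem together with the treatment of the puncture/end. The topological half (that $\Sigma$ is diffeomorphic to a bounded planar domain) is classical surface theory, but one must be careful that Moser's trick, which is usually stated for closed manifolds or compactly supported deformations, applies to two open surfaces of equal finite total area; the standard remedy is: (i) a preliminary diffeomorphism making the two area forms coincide on a neighbourhood of the end (possible since both have finite mass concentrated the same way near their single end, after scaling), then (ii) the two forms differ by a compactly supported exact perturbation $d\alpha$ with $\alpha$ compactly supported, and the time-one flow of the non-autonomous vector field $X_t$ solving $\imath_{X_t}\omega_t=-\alpha$ — which has compact support and hence is complete — gives the required diffeomorphism fixing the end. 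Assembling $g$ from these pieces and then taking the product with $\mathrm{id}_{B^2_R}$ completes the argument. $\square$
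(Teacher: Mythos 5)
Your construction breaks down at the very first step: the punctured torus $\Sigma=\T^2\setminus\{\mathrm{pt}\}$ does \emph{not} embed in $\R^2$, even topologically, so no symplectic embedding $g:\Sigma\hookrightarrow\R^2$ can exist and the product map $\mathrm{id}_{B^2_R}\times g$ cannot be formed. The claim that $\Sigma$ is ``an oriented surface of genus one with one end, hence an open subset of $\R^2$'' is false: open subsets of the plane have genus zero. Concretely, the two standard generating loops of $\Sigma$ are simple closed curves meeting transversally in a single point, whereas any two transverse simple closed curves in $\R^2$ meet in an even number of points (equivalently, the intersection form on $H_1$ of a planar domain vanishes, while on $H_1(\Sigma)\cong\Z^2$ it is unimodular). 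Being homotopy equivalent to a figure eight is not enough: a small neighbourhood of a figure eight in $\R^2$ is a three-holed sphere (pair of pants), which has genus zero and three ends, not a once-punctured torus. So the Moser/Greene--Shiohama area-matching discussion, while fine in itself, is applied to a nonexistent object.

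This obstruction is exactly why the lemma has genuine content. The paper (following Guth \cite{gut08}) works with a symplectic \emph{immersion} $\psi:\Sigma\rightarrow\R^2$, which does exist, arranged so that its only self-overlap is a small square $S\cap S'$ covered by two disjoint disks $D,D'\subset\Sigma$. The self-intersection is then resolved using the extra two dimensions of $\R^4$: one composes $\mathrm{id}_{B^2_R}\times\psi$ on $B^2_R\times\psi^{-1}(S)$ with the Hamiltonian shear $\phi(p_1,q_1,p_2,q_2)=(p_1,\,q_1+\chi(p_2),\,p_2,\,q_2+\chi'(p_2)p_1)$, which lifts one sheet by $2R$ in the $q_1$-direction while moving the $q_2$-coordinate by at most $\|\chi'\|_\infty R<2R-\epsilon$, and then checks that the images of the two sheets are disjoint. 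In other words, the $B^2_R$ factor is not a passive bystander as in your product construction; it is what provides the room to separate the two branches of the immersed torus, and this interplay is the heart of the proof. Any correct argument must deal with the non-planarity of $\Sigma$ in some such way.
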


\begin{proof}
Choose a positive number $\epsilon< R/3$ and set
\[
S := [-2R,2R] \times ]-\epsilon,\epsilon[, \quad S' := ]-\epsilon,\epsilon[ \times [-2R,2R].
\]
If $\epsilon$ is small enough (precisely, if $8R\epsilon<1$),  we can find a smooth symplectic immersion $\psi: \Sigma\rightarrow \R^2$ such that:
\begin{enumerate}
\item $\psi(\Sigma)\cap [-2R,2R]^2 = S \cup S'$;
\item $\psi^{-1}(S\cap S')$ consists of two disjoint open disks $D,D'\subset \Sigma$;
\item the restrictions $\psi|_{\Sigma \setminus \overline{D}}$ and $\psi|_{\Sigma \setminus \overline{D'}}$ are injective.
\end{enumerate}
Such a symplectic immersion is easily found by starting from a smooth immersion (see \cite[Figure 3]{gut08}) and by making it area-preserving by the already mentioned theorem of Moser. 

Let $\chi$ be a smooth real valued function on $\R$ with support in $[-2R+\epsilon,2R-\epsilon]$, such that $\chi=2R$ on $[-\epsilon,\epsilon]$ and $\|\chi'\|_{\infty} \leq 3/2$ (such a function exists because $\epsilon<R/3$). The map
\[
\phi: \R^4 \rightarrow \R^4, \quad \phi(p_1,q_1,p_2,q_2) := \bigl(p_1,q_1+ \chi(p_2),
p_2,q_2 + \chi'(p_2) p_1\bigr),
\]
is a symplectic diffeomorphism, being the time-one map of the Hamiltonian flow generated by the Hamiltonian $H(p_1,q_1,p_2,q_2) := \chi(p_2)p_1$. 

We claim that the map
\[
\varphi: B^2_R \times \Sigma \rightarrow \R^4, \quad \varphi = \left\{ \begin{array}{ll} \mathrm{id} \times \psi & \mbox{on } B^2_R \times (\Sigma\setminus \psi^{-1}(S)) , \\ \phi\circ (\mathrm{id} \times \psi) & \mbox{on } B^2_R \times \psi^{-1}(S) , \end{array} \right.
\]
is a symplectic embedding. 

The map $\mathrm{id}\times \psi$ maps a neighborhood of the boundary of $B^2_R \times \psi^{-1}(S)$ in $B^2_R \times \Sigma$ into a small neighborhood of $B^2_R \times \{\pm 2R\} \times ]-\epsilon,\epsilon[ $, on which $\phi=\mathrm{id}$. This proves that $\varphi$ is a smooth symplectic immersion. 

There remains to check that $\varphi$ is an embedding. By the properties of $\psi$, $\varphi$ is an embedding on a neighborhood of $B^2_R \times \overline{\Sigma \setminus \psi^{-1}(S)}$ and on a neighborhood of $B^2_R \times \psi^{-1}(S)$. Therefore, it is enough to prove that $\varphi$ maps the sets $B^2_R \times (\Sigma \setminus \psi^{-1}(S))$ and $B^2_R\times \psi^{-1}(S)$ into disjoint sets. Let $z\in \psi^{-1}(S)$, set $(p_2,q_2):= \psi(z)\in S$ and let $(p_1,q_1)\in B^2_R$. Then
\begin{equation}
\label{imago}
\varphi(p_1,q_1,z) = \bigl(p_1,q_1+ \chi(p_2),
p_2,q_2 + \chi'(p_2) p_1\bigr).
\end{equation}
Since
\[
|q_2+\chi'(p_2) p_1| \leq \epsilon + \|\chi'\|_{\infty} R < \epsilon + \frac{3}{2} R < 2R,
\]
the point $\varphi(p_1,q_1,z)$ belongs to $\R^2 \times [-2R,2R]^2$. The intersection of the latter set with 
\[
\varphi \bigl( B^2_R \times (\Sigma \setminus \psi^{-1}(S)) \bigr) = 
B^2_R \times \psi ((\Sigma\setminus \psi^{-1}(S))
\]
is $B^2_R\times S'$, so we must show that $\varphi(p_1,q_1,z)$ does not belong to $B^2_R\times S'$. If $|p_2|\geq \epsilon$, (\ref{imago}) shows that the last two-dimensional component of $\varphi(p_1,q_1,z)$ does not belong to $S'$. If $|p_2|< \epsilon$, the second component of $\varphi(p_1,q_1,z)$ is 
\[
q_1 + 2R \geq R,
\]
hence the first two-dimensional component of $\varphi(p_1,q_1,z)$ does not belong to $B^2_R$. This concludes the proof of Lemma \ref{guth2}.
\end{proof} 

We are now ready to prove the nonlinear squeezing result:

\begin{thm}[Nonlinear squeezing]
Let $P:\R^{2n}\rightarrow \R^{2n}$ be the orthogonal projection onto a complex linear subspace of $\R^{2n}$ of real dimension $2k$, with $2\leq k \leq n-1$. For every $\epsilon>0$ there exists a smooth symplectic embedding $\phi: B^{2n} \rightarrow \R^{2n}$ such that
\[
\mathrm{vol}_{2k} \bigl( P \phi(B^{2n}) \bigr) < \epsilon.
\]
\end{thm}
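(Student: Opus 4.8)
The plan is to combine Guth's two lemmata, building an embedding of a large ball into a product where one $2$-dimensional factor can be made to have arbitrarily small area, while recording the shadow carefully in each dimension. First I would reduce the general case to a model situation. Writing $\R^{2n} = \R^4 \times \R^2 \times \R^{2(n-3)}$ via a symplectic (indeed complex-linear) change of coordinates, I want to arrange that the complex subspace $V$ onto which $P$ projects contains the $\R^4$ factor and meets the $\R^2$ factor trivially (this uses $2\le k$ for the first requirement and $k\le n-1$ for the second, so such a complex $2k$-dimensional $V$ exists). Since precomposing $\phi$ with a linear symplectic map does not affect whether it is a symplectic embedding of $B^{2n}$, and since $P\Phi$ has the same image-volume behaviour up to controllable factors, I can work in these coordinates.

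Next I would construct the embedding. Fix a large radius $R$ (to be chosen at the end in terms of $\epsilon$). By Lemma~\ref{guth1} there is a symplectic embedding $\alpha: B^4_R \hookrightarrow \Sigma \times \R^2$, where $\Sigma$ is the once-punctured torus with a symplectic form of area one. Apply Lemma~\ref{guth2} with a small radius $r$ in place of $R$: it gives a symplectic embedding $\beta: B^2_r \times \Sigma \hookrightarrow \R^4$; crucially, inspecting its proof, the image $\beta(B^2_r\times\Sigma)$ is contained in $B^2_r \times [-2r,2r]^2 \subset \R^2\times\R^2$, hence its projection to the $\R^4$ it lives in has $4$-volume at most a constant times $r^4$, and in fact as $r\to 0$ it becomes squeezed into an arbitrarily thin slab. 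Now shuffle the factors: starting from $B^{2n}_R \subset B^4_R \times B^2_R \times B^{2(n-3)}_R$, apply $\alpha$ on the first factor to land in $\Sigma \times \R^2 \times \R^2 \times \R^{2(n-3)}$, then regroup the $\Sigma$ with the $\R^2$ coming from $B^2_R$ (after rescaling $B^2_R$ to $B^2_r$ — harmless up to an overall symplectic rescaling at the very end) and apply $\beta$ to that $B^2_r \times \Sigma$, landing in $\R^4$. The upshot is a symplectic embedding $\phi$ of a ball into $\R^{2n}$ whose image, in the coordinates where $V \supseteq \R^4_{\mathrm{new}}$ and $V$ is transverse to the squeezed $\R^2$ factor, lies in (small slab) $\times$ (bounded set).

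Then I would estimate the shadow volume. The subspace $V$ decomposes (after the linear normalization) as the sum of a fixed bounded-volume part and the thin direction; $P\phi(B^{2n})$ is contained in a product of a set whose relevant $2$-dimensional cross-section has area $O(r^2)$ with a set of bounded $(2k-2)$-volume, so by Fubini $\vol_{2k}(P\phi(B^{2n})) \le C r^2$ for a constant $C$ depending only on $n$, $k$, and the fixed choices. Undoing the symplectic rescaling that turned $B^{2n}$ into $B^{2n}_R$ multiplies everything by $R^{2k}$, so one really gets a bound of the form $C' R^{2k} r^2$; choosing $r$ small relative to $R$ (Lemma~\ref{guth2} allows any $r>0$), and noting Lemma~\ref{guth1} allows any $R$, the product can be driven below $\epsilon$. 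This yields a symplectic embedding of $B^{2n}$ (after the final rescaling bringing $B^{2n}_R$ back to $B^{2n}$, or equivalently just taking $R=1$ from the start and letting only $r\to 0$) with $\vol_{2k}(P\phi(B^{2n}))<\epsilon$.

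The main obstacle I anticipate is the bookkeeping in the first step: one must check that a complex-linear symplectic change of coordinates can simultaneously put $V$ in the required position relative to \emph{both} the $\R^4$ block handled by Lemma~\ref{guth1} and the $\R^2$ block that gets squeezed by Lemma~\ref{guth2}, and that the regrouping of $\R^2$-factors between the two lemmata is itself symplectic (it is, being a permutation of conjugate coordinate pairs). A secondary point requiring care is extracting from the proof of Lemma~\ref{guth2} the quantitative statement that the image sits in $B^2_r\times[-2r,2r]^2$, so that the shadow — which only sees $V$, missing the squeezed direction — is genuinely small; this is where the hypotheses $2\le k$ and $k\le n-1$ are both used, since for $k=1$ or $k=n$ the squeezed direction would be forced to lie inside $V$ or the bounded complement would be empty, and the estimate would collapse — consistently with Gromov's theorem and Liouville's theorem, respectively.
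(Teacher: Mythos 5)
Your overall architecture is the same as the paper's: decompose the ball as a subset of $B^4_R\times B^2_R\times B^{2(n-3)}_R$, apply Lemma \ref{guth1} to the $B^4_R$ factor, regroup the resulting $\Sigma$ with the $2$-disk factor, apply Lemma \ref{guth2}, and rescale at the end (the paper in fact first reduces to $n=3$, $k=2$ by taking a product with the identity, which disposes of your Fubini bookkeeping for $k>2$). But the mechanism you rely on for smallness has a genuine gap. You want to apply Lemma \ref{guth2} to $B^2_r\times\Sigma$ with an \emph{independently small} $r$, while the disk factor you actually possess after the decomposition has radius $R$ (it must, since points of $B^{2n}_R$ can have a $2$-disk component of norm close to $R$). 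Your parenthetical fix, ``rescaling $B^2_R$ to $B^2_r$ --- harmless up to an overall symplectic rescaling at the very end,'' does not work: there is no symplectic embedding of $B^2_R$ into $B^2_r$ for $r<R$ (area obstruction), and an overall rescaling shrinks every factor simultaneously, so it cannot create the asymmetry between the $\Sigma$-paired disk and the rest of the picture. With the only admissible choice $r=R$, your construction gives a shadow of $4$-volume about $\pi R^2$ before rescaling, and nothing in your argument makes this small. A secondary error: the image of the embedding of Lemma \ref{guth2} is \emph{not} contained in $B^2_r\times[-2r,2r]^2$; the immersed surface $\psi(\Sigma)$ has area one and cannot fit inside a square of area $16r^2$ --- only its intersection with that square is controlled --- so the ``thin box plus Fubini'' estimate is unfounded. (Your scaling bookkeeping is also reversed: passing from an embedding of $B^{2n}_R$ to one of $B^{2n}$ divides $2k$-volumes by $R^{2k}$, it does not multiply them.)

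The correct mechanism, which is the heart of the paper's proof, is this: apply both lemmata at the \emph{same large} radius $R$, and observe that the shadow of the composed embedding $\tilde\phi$ is contained in $\psi(B^2_R\times\Sigma)$, whose $4$-volume equals $\mathrm{vol}_4(B^2_R\times\Sigma)=\pi R^2$ simply because a symplectic embedding of a $4$-dimensional piece preserves $4$-volume --- no containment in a small region is needed, and this is exactly where the fixed area $1$ of $\Sigma$ (independent of $R$) enters. The shadow bound therefore grows only like $R^2$ while the ball has radius $R$; rescaling $\phi(z):=\tilde\phi(Rz)/R$ divides $4$-volumes by $R^4$ and yields $\mathrm{vol}_4\bigl(P\phi(B^{2n})\bigr)\leq \pi/R^2$, which is smaller than $\epsilon$ for $R$ large. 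So the smallness comes from the mismatch of growth exponents under a single rescaling, not from a second small parameter $r$, which is not symplectically available.
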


\begin{proof}
Up to the composition by a unitary automorphism of $(\R^{2n},J)$, we may assume that $V$ is the linear subspace corresponding to the coordinates $p_1,q_1,\dots,p_k,q_k$. Moreover, it is enough to consider the case $n=3$ and $k=2$, from which the general case follows by taking the product by the identity mapping. Because of these simplifications, $P:\R^6 \rightarrow \R^6$ is the standard projection on the subspace given by the first four coordinates $p_1,q_1,p_2,q_2$.

Let $R$ be a positive number. By Lemmata \ref{guth1} and \ref{guth2}, there are symplectic embeddings
\[
\varphi: B^4_R \rightarrow \Sigma \times \R^2, \quad \psi: B^2_R \times \Sigma \rightarrow \R^4.
\]
Consider the symplectic embedding $\tilde{\phi}:B^6_R \rightarrow \R^6$ which is defined as the composition
\[
B^6_R \hookrightarrow B^2_R \times B^4_R \stackrel{\mathrm{id}\times \varphi}{\longrightarrow} B^2_R \times \Sigma \times \R^2 \stackrel{\psi \times \mathrm{id}}{\longrightarrow} \R^4 \times \R^2 = \R^6.
\]
Then
\begin{equation}
\label{volume}
\mathrm{vol}_4\bigl(P \tilde{\phi} (B^6_R) \bigr) \leq \mathrm{vol}_4 \bigl(\psi(B^2_R \times \Sigma)\bigr) = \mathrm{vol}_4 (B^2_R \times \Sigma) = \pi R^2.
\end{equation}

The required symplectic embedding $\phi:B^6_1 \rightarrow \R^6$ is obtained by rescaling: Indeed, the embedding $\phi(z) := \tilde{\phi}(Rz)/R$ is symplectic and by (\ref{volume}), the quantity
\[
\mathrm{vol}_4 \bigl(P \phi(B^6_1) \bigr) = \mathrm{vol}_4 \Bigl( \frac{1}{R} P \tilde{\phi}(B^6_R) \Bigr) = \frac{1}{R^4} \mathrm{vol}_4\bigl(P \tilde{\phi} (B^6_R) \bigr) \leq \frac{\pi}{R^2}
\]
is smaller than $\epsilon$, if $R$ is large enough.
\end{proof} 

\section{Preliminary remarks on the local question}
\label{3sec}

Let $\phi:\R^{2n} \rightarrow \R^{2n}$ be a symplectic diffeomorphism and let $P:\R^{2n} \rightarrow \R^{2n}$ be the orthogonal projector onto a complex linear subspace $V$ of real dimension $2k$, with $2\leq k \leq n-1$. Denote by $\psi$ the composition $P \phi$. In its former formulation, the local question proposed in the introduction is whether the inequality
\begin{equation}
\label{middle3}
\vol_{2k} \bigl(\psi (B_R) \bigr) \geq \omega_{2k} R^{2k},
\end{equation}
holds for $R>0$ small enough. 

The linear non-squeezing Theorem 1, together with the identities (\ref{Volume}) and (\ref{maxi}), implies that
\begin{equation}
\label{lincon}
J_{2k} \psi(x) \geq 1, \qquad \forall x\in \R^{2n},
\end{equation}
where $J_{2k}\psi (x)$ is the $2k$-Jacobian of the map $\psi$ at $x$, that is the number
\[
J_{2k} \psi(x) = \max_{W\in \mathrm{Gr}_{2k}(\R^{2n})} |\det D\psi(x) |_W|,
\] 
$\mathrm{Gr}_{2k}(\R^{2n})$ being the Grassmannian of $2k$-dimensional linear subspaces of $\R^{2n}$.

Our first remark is that (\ref{middle3}) does not follow simply from the inequality (\ref{lincon}). In fact, if $m>h>1$, there are smooth maps $\varphi:\R^m \rightarrow \R^h$  whose $h$-Jacobian is everywhere at least 1 but for which
\begin{equation}
\label{strizza}
\mathrm{vol}_h \bigl( \varphi(B_R^m) \bigr) < \omega_h R^h,
\end{equation}
for every small $R>0$. Examples with $m=3$ and $h=2$ can be found among the maps of the form
\begin{equation}
\label{example}
\varphi: \C \times \R \cong \R^3 \rightarrow \C \cong \R^2, \quad
\varphi(z,t) = \rho(|z|) e^{it} z,
\end{equation}
where $\rho:[0,+\infty[ \rightarrow \R$ is a smooth positive function such that
\[
\rho(0)=1, \quad \rho'(0)=0, \quad \rho(r)<1 \quad \forall r>0.
\]
Indeed, for small $R$ such a $\varphi$ maps the cylinder $B^2_R \times \R$ -- and a fortiori the ball $B^3_R$ -- into the disk of radius $\rho(R)R$, which is smaller than $R$ for $R>0$. The $2$-Jacobian  of $\varphi$ is easily computed to be 
\[
J_2 \varphi (z,t)  = \rho(|z|) \bigl( \rho(|z|) + |z| \rho'(|z|) \bigr) \sqrt{1+|z|^2}, 
\]
from which we find
\[
J_2 \varphi (z,t) = 1 + \left( \frac{1}{2} + 2\rho''(0) \right) |z|^2 + O(|z|^3) \qquad \mbox{for } z\rightarrow 0.
\]
Therefore, if $\rho''(0)>-1/4$ then
$J_2 \varphi (z,t) \geq 1$ for $|z|$ small enough. Examples for arbitrary $m>h>1$ are obtained from this map by taking the product with an orthogonal projection.

\medskip

The existence of smooth maps $\varphi:\R^m \rightarrow \R^h$ with $m>h$ whose $h$-Jacobian is everywhere at least 1 but for which the inequality (\ref{strizza}) holds for every small $R>0$, such as the one defined above, is related to a non-integrability issue. Indeed, the set 
\begin{equation}
\label{mvd}
\mathscr{W}(x) := \set{ W\in \mathrm{Gr}_h (\R^m) }{ |\det D\varphi(x) |_W| \geq 1}
\end{equation}
is non-empty for every $x\in \R^m$ because $J_h \varphi(x)\geq 1$ and defines what could be called a ``multi-valued $h$-dimensional distribution'' on $\R^m$. If $\varphi$ satisfies (\ref{strizza}) for every small $R>0$, such a multi-valued distribution cannot be integrable in a neighborhood of zero, as is implied by the following result (which also clarifies the meaning of integrability for multi-valued distributions):

\begin{lem}
\label{seintespa}
Let $m\geq h\geq 1$ and let $\varphi:\R^m \rightarrow \R^h$ be a smooth map such that $J_h(\varphi(x))\geq 1$ for every $x\in \R^m$. Assume that the multi-valued distribution $\mathscr{W}$, which is defined in (\ref{mvd}), is integrable, in the sense that $\R^m$ admits a smooth $h$-dimensional foliation $\mathscr{F}$ such that for every $x\in \R^m$ the tangent space at $x$ of the leaf through $x$ belongs to $\mathscr{W}(x)$.
Then
\[
\mathrm{vol}_h \bigl( \varphi (B_R^m) \bigr) \geq \omega_h R^h,
\]
for every $R>0$ small enough.
\end{lem}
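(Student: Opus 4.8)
The plan is to split according to the value $J_h\varphi(0)$, which is $\ge 1$ by hypothesis, recalling from (\ref{maxi}) that $W\mapsto\bigl|\det D\varphi(0)|_W\bigr|$ has a unique maximizer $W^*=\ran D\varphi(0)^T$. If $J_h\varphi(0)>1$ the foliation is not needed: take the flat $h$-disc $S:=W^*\cap B_R^m$, of volume exactly $\omega_h R^h$; since $D(\varphi|_S)(0)=D\varphi(0)|_{W^*}$ is invertible, $\varphi|_S$ is a diffeomorphism onto its image for $R$ small, and by the area formula
\[
\vol_h\bigl(\varphi(B_R^m)\bigr)\ \ge\ \vol_h\bigl(\varphi(S)\bigr)\ =\ \int_S\bigl|\det D\varphi(x)|_{W^*}\bigr|\,d\mathcal H^h(x)\ \ge\ \bigl(J_h\varphi(0)-\epsilon\bigr)\,\omega_h R^h
\]
for $R$ small, with $\epsilon>0$ as small as we wish by continuity; the choice $\epsilon=\tfrac12(J_h\varphi(0)-1)$ ends this case. (The same computation with the leaf $L_0$ through $0$ instead of $S$ gives $\vol_h(\varphi(B_R^m))\ge\omega_h R^h(\delta_0+o(1))$, where $\delta_0:=\bigl|\det D\varphi(0)|_{T_0L_0}\bigr|$, which also handles $\delta_0>1$.)

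Thus the essential case is $J_h\varphi(0)=1$, equivalently $\delta_0=1$ together with $T_0L_0=W^*$. Here the plan is to straighten $\mathscr F$ near $0$ by a diffeomorphism $\Theta$ carrying $(u,t)$ to the point, lying over $u\in T_0L_0$, of the leaf that meets $\{u=0\}$ at $t\in(T_0L_0)^\perp$; writing the nearby leaves as graphs $L_t=\{(u,f_t(u))\}$, the map $\Psi:=\varphi\circ\Theta$ satisfies
\[
\bigl|\det\partial_u\Psi(u,t)\bigr|=J_h(\varphi|_{L_t})\bigl(\Theta(u,t)\bigr)\cdot\sqrt{\det\bigl(I+Df_t(u)^TDf_t(u)\bigr)}\ \ge\ 1,
\]
both factors being $\ge1$ (the first by the definition of $\mathscr W$). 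For $R$ small each slice $\Psi_t:=\Psi(\cdot,t)$ is a diffeomorphism onto its image, so, with $W_t':=\{u:|u|^2+|f_t(u)|^2<R^2\}$, one has $\vol_h(\varphi(B_R^m))=\vol_h\bigl(\bigcup_{|t|<R}\Psi_t(W_t')\bigr)$. The central slice $t=0$ alone contributes only $\vol_h(\Psi_0(W_0'))=\omega_h R^h+O(R^{h+2})$, an error of a priori uncontrolled sign coming from the second fundamental form of $L_0$, so the transverse directions must be brought in. Since $(T_0L_0)^\perp=\ker D\varphi(0)$, the first-order term $\partial_t\Psi(0,0)=D\varphi(0)|_{(T_0L_0)^\perp}$ vanishes and $\Psi_t(u)=\Psi_0(u)+O(|t|^2)$; when (generically) this $O(|t|^2)$ correction is not identically zero, the bodies $\Psi_t(W_t')$ protrude from $\Psi_0(W_0')$, and a Steiner/Brunn--Minkowski-type estimate for $\bigcup_{|t|<R}\bigl(\Psi_0(W_0')+\tfrac12\partial_{tt}\Psi(0,0)[t,t]\bigr)$ (or the corresponding shear estimate if $\partial_{tt}\Psi(0,0)=0\ne\partial_{ut}\Psi(0,0)$) yields an excess of order $R^{h+1}$, which swallows the $O(R^{h+2})$ deficit, giving $\vol_h(\varphi(B_R^m))\ge\omega_h R^h$ for $R$ small.

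The main obstacle is precisely this last case. Beyond pushing the two expansions to the correct order and checking that the transverse excess really beats the leaf deficit, one must deal with the degenerate subcase in which every $t$-derivative of $\Psi$ at the origin vanishes, so that the transverse spreading is invisible to any Taylor expansion. There the only resource is the constraint $J_h(\varphi|_{L_0})\ge1$ along the \emph{entire} leaf: a curved leaf forces $\bigl|\det D\varphi(x)|_{T_xL_0}\bigr|$ to grow as $x$ moves away from $0$ — rotating $T_xL_0$ off the maximizer $W^*$ would lower the determinant, so $D\varphi$ is compelled to compensate, at a rate controlled by the curvature of $L_0$ — and one wants this growth of the integrand to offset exactly the shrinkage $W_0'\subsetneq B_R^h$. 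Making this precise comes down to a Wirtinger-type inequality relating the second fundamental form of the leaf, the Hessian of $J_h(\varphi|_{L_0})$ at $0$, and the local geometry of the maximizer on the Grassmannian, and I expect it to be the technical heart of the proof.
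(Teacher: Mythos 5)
Your case split is sound only in the easy half: when $J_h\varphi(0)>1$ a flat disc and continuity suffice, as you say. But in the essential case $J_h\varphi(0)=1$ your argument is not a proof. The claimed transverse excess of order $R^{h+1}$ is never established (no control on the sign or size of the relevant constant from $\partial_{tt}\Psi$ or $\partial_{ut}\Psi$), and you yourself isolate the degenerate subcase, where all transverse derivatives of $\Psi$ at the origin vanish, and resolve it only by conjecturing a ``Wirtinger-type inequality''. Worse, the heuristic you invoke there is not a consequence of the hypotheses: the assumption only gives $\bigl|\det D\varphi(x)|_{T_xL_0}\bigr|\geq 1$ along the leaf, and the maximizing plane $\ran D\varphi(x)^T$ may rotate together with $T_xL_0$ as $x$ moves, so the determinant can stay identically equal to $1$ along a curved leaf; no growth is forced by the curvature of $L_0$. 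Consequently the deficit $\vol_h(L_0\cap B_R^m)=\omega_h R^h-O(R^{h+2})$ cannot in general be recovered from the leaf through the origin, neither from its Jacobian nor from an image-side Brunn--Minkowski argument, and the proposal has a genuine gap exactly at what you call its technical heart.

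The missing idea is to give up privileging the leaf through $0$ and to find the compensation in the domain: the paper first proves (Lemma \ref{foliation}) that a foliation of $B^m$ by $h$-discs which is $C^2$-close to the affine foliation has \emph{some} leaf of $h$-volume at least $\omega_h$. This is done by solving the Plateau problem for the boundary $F\cap\partial B$ of each leaf in the class of graphs (Wang's theorem), using continuous dependence to select a leaf whose minimal filling $\tilde F$ passes through the origin, and then applying the monotonicity formula for minimal submanifolds to get $\vol_h(F)\geq\vol_h(\tilde F)\geq\omega_h$. After rescaling $B_R^m$ to $B^m$, this produces a leaf $F$ of $\mathscr{F}|_{B_R^m}$ with $\vol_h(F)\geq\omega_h R^h$; since the integrability hypothesis gives $\bigl|\det D\varphi(x)|_{T_xF}\bigr|\geq 1$ along \emph{every} leaf and $\varphi|_F$ is injective for small $R$, the area formula yields $\vol_h(\varphi(B_R^m))\geq\vol_h(\varphi(F))\geq\vol_h(F)\geq\omega_h R^h$ with no case analysis. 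In other words, when the central leaf curves away, neighbouring leaves necessarily cut $B_R$ in larger slices; this domain-side mechanism, absent from your proposal, is what closes the argument.
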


The proof of this lemma is given at the end of this section. Notice that when $h=1$ or $h=m$, $\mathscr{W}$ is always integrable: the case $h=m$ is obvious, because $\mathscr{W}(x)=\R^m$ for every $x$, and the case $h=1$ follows from the fact that, by (\ref{maxi}), $\mathscr{W}$ admits the smooth selection
\[
W(x) = D\varphi (x)^T \R^h,
\]
which, when $h=1$, is a genuine one-dimensional distribution, hence integrable.
So the above lemma explains our previous claim that maps $\varphi:\R^m\rightarrow \R^h$ which satisfy $J_h \varphi\geq 1$ and (\ref{strizza}) for every small $R>0$ exist only when $m>h>1$.

\medskip

Now let $\phi:\R^{2n} \rightarrow \R^{2n}$ be a symplectic diffeomorphism and let $P$ be the orthogonal projector onto a complex linear subspace $V\subset \R^{2n}$. 
The above lemma implies a positive answer to the local non-squeezing question when the multi-valued distribution $\mathscr{W}$ which is associated to the map $P \phi$ is integrable. In the ``rigid case'', in which $\mathscr{W}(x)$ consists of a single vector space for every $x\in \R^{2n}$, $\mathscr{W}$ is certainly integrable. In fact, in this case formula (\ref{maxi}) and Theorem 1 imply that $\mathscr{W}(x)$ consists of the space
\[
W(x) = D\phi(x)^T V,
\]
which must be complex for every $x\in \R^{2n}$.  Therefore, the distribution
\[
W(x) = J W(x) = JD\phi(x)^T V = D\phi(x)^{-1}J V = D\phi(x)^{-1} V
\]
is tangent to the foliation given by the image by $\phi^{-1}$ of the linear foliation given by $2k$-dimensional subspaces parallel to $V$. This fact has the following consequence:

\begin{prop*}[Generic local non-squeezing]
Let $\phi:\R^{2n}\rightarrow \R^{2n}$ be a symplectic diffeomorphism and let $P$ be the orthogonal projection onto a complex subspace $V\subset \R^{2n}$ of real dimension $2k$, with $1\leq k \leq n$. Then there is an open and dense subset $U\subset \R^{2n}$ with the following property: for every $x\in U$ there exists $R_0=R_0(x)>0$ such that
\begin{equation}
\label{sss}
\vol_{2k} \bigl( P\phi (B_R(x)) \bigr) \geq \omega_{2k} R^{2k},
\end{equation}
for every $R\leq R_0$. 
\end{prop*}

Indeed, we can consider the closed set $Y\subset \R^{2n}$ consisting of all points $x$ for which 
\[
\mathrm{vol}_{2k} \bigl( P D\phi(x) (B) \bigr) = \omega_{2k},
\]
or, equivalently,
\[
\mathscr{W}(x) = \{W(x)\} = \{ D\phi(x)^T V\}.
\]
On the complement of $Y$ we have
\[
\mathrm{vol}_{2k} \bigl( P D\phi(x) (B) \bigr) > \omega_{2k},
\]
and (\ref{sss}) holds for every small $R$ just by continuity. On the interior part of $Y$, the distribution $W$ is integrable, as we have seen, and (\ref{sss}) holds for every  small $R$ because of Lemma \ref{seintespa}. Then
\[
U:= \mathrm{Int}(Y) \cup Y^c = \R^{2n} \setminus \partial Y
\]
is the required open and dense set.
\medskip

Although the analysis of the above and of several other examples suggests that the multi-valued distribution associated to $P\phi$ should always be integrable, we do not have a proof of this fact. Therefore, in the following sections we use a different and more direct strategy for studying the local question. 

\medskip

We conclude this section by proving Lemma \ref{seintespa}. The proof is based on the following result:

\begin{lem}
\label{foliation}
Let $\mathscr{F}$ be a foliation of $B^m$ consisting of $h$-dimensional disks. If $\mathscr{F}$ is $C^2$-close enough to the foliation by parallel affine subspaces, then it has a leaf whose $h$-volume is at least $\omega_h$.
\end{lem}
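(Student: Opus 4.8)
The plan is to reduce the statement to a competition between the volume of a leaf and the volume of the flat disk it is close to, and to exploit the fact that among graphs over a fixed flat $h$-disk, the flat disk itself is the one of least volume. More precisely, fix the reference foliation of $B^m$ by affine $h$-planes parallel to a fixed $h$-dimensional subspace $W_0$, with coordinates $x=(u,v)\in W_0\times W_0^\perp\cong \R^h\times \R^{m-h}$. If $\mathscr F$ is $C^2$-close to this reference foliation, then near the origin each leaf of $\mathscr F$ is a graph $v=g(u)$ over (an open subset of) $W_0$, with $g$ and $Dg$ small; in particular the leaf $L_0$ through $0$ is the graph of some $g_0$ with $g_0(0)=0$ and $Dg_0(0)$ small. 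The idea is that this leaf cannot be ``too flat'': either $g_0$ is genuinely curved, in which case its graph over a disk of radius $\approx 1$ already has volume strictly bigger than $\omega_h$, or $g_0$ is essentially affine, in which case after an orthogonal change of the splitting we may take $g_0$ to vanish to high order and the graph is essentially the flat $h$-disk of radius $1$, again of volume $\omega_h$.

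First I would set up the graph description rigorously: by the $C^2$-closeness, the leaves of $\mathscr F$ solve a first-order ODE/foliation equation whose right-hand side (the ``slope field'' $\sigma(u,v)\in \Hom(W_0,W_0^\perp)$ defining the tangent planes) is $C^1$-small; the leaf through a given point is obtained by integrating this, so $g_0$ is defined on a ball of radius close to $1$ in $W_0$ and satisfies $\|g_0\|_{C^1}$ small, with a uniform modulus as $\mathscr F\to$ reference. The $h$-volume of the leaf $L_0$ (restricted to this ball $B^h_r$, $r$ close to $1$, which is contained in the leaf's intersection with $B^m$) is
\[
\vol_h(L_0) \;\geq\; \int_{B^h_r} \sqrt{\det\bigl(I + Dg_0(u)^T Dg_0(u)\bigr)}\,du \;\geq\; \int_{B^h_r} 1\,du \;=\; \omega_h r^h,
\]
since the integrand is $\geq 1$ pointwise. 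This is almost what we want but gives $\omega_h r^h$ with $r<1$, so the naive estimate loses a little; the fix is to use that $B^m\supset B^h_1\times\{0\}$ and that the leaf through $0$, being a graph of a small function, actually spreads over a $u$-ball of radius $r\to 1$, and to gain the missing factor from the curvature term: if $Dg_0$ is not identically zero the integrand exceeds $1+c|Dg_0|^2$ and the excess compensates; if $Dg_0\equiv 0$ on a full ball of radius $1$ then $L_0$ literally is the flat disk $B^h_1$ and $\vol_h(L_0)=\omega_h$. So the clean way is a dichotomy, or better: choose the leaf not through $0$ but through a well-chosen point so that its graph domain contains $B^h_1$; by Fubini/continuity of the foliation there is a leaf of $\mathscr F$ whose projection to $W_0$ covers the unit $h$-ball $B^h_1$ (the leaves sweep out $B^m$ and each has nearly-flat slope, so their $W_0$-projections vary continuously and cover a neighborhood of the central flat disk), and for that leaf the integral above is over $B^h_1$, giving $\vol_h \geq \omega_h$ directly.

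Concretely, the steps in order are: (1) fix the reference flat foliation and coordinates $(u,v)$, and record that $C^2$-closeness forces each leaf to be locally a $C^1$-small graph $v=g(u)$ over $W_0$ with a modulus $\to 0$; (2) show, using continuity of the foliation and a degree/connectedness argument, that some leaf $L$ has $W_0$-projection containing the closed unit $h$-ball $B^h_1$ (and $L\cap B^m$ contains the corresponding graph piece); (3) apply the area formula, $\vol_h(L\cap B^m)\geq \int_{B^h_1}\sqrt{\det(I+Dg^T Dg)}\geq \int_{B^h_1}1 = \omega_h$; (4) conclude. I expect the main obstacle to be step (2): making precise the sense in which ``$\mathscr F$ $C^2$-close to flat'' guarantees a leaf whose shadow over $W_0$ is large enough to contain $B^h_1$ rather than just a slightly smaller disk. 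This is where the hypothesis that $\mathscr F$ is a foliation of the whole ball $B^m$ (not just a partial one) is used: the leaves near the one through $0$ can be slid in the $W_0^\perp$-direction, and since the total ``thickness'' of $B^m$ in those directions is comparable to $1$ while the slopes are $o(1)$, the central leaf's graph domain is forced to be of radius $1-o(1)$, and a compactness argument upgrades this to $\geq 1$ in the limit — but for a fixed $\mathscr F$ one instead picks the optimal leaf and absorbs the $o(1)$ loss into the curvature gain, which is the slightly delicate quantitative point.
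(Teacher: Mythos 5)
Your step (2) --- finding a leaf whose $W_0$-projection contains the whole unit ball $B^h_1$ --- is false in general, and this is where the proposal breaks down. Take the foliation of $B^m$ by affine $h$-planes parallel to a slightly tilted subspace, i.e.\ the leaves $\{v=\epsilon u_1 e + c\}$ with $e\in W_0^{\perp}$ a unit vector: this is $C^2$-close to the flat foliation, but the $W_0$-projection of every leaf meets the $u_1$-axis in an interval of length at most $2/\sqrt{1+\epsilon^2}<2$, so no leaf is a graph over all of $B^h_1$ (here the lemma holds only because the leaf through $0$ is exactly a flat unit disk). Your fallback --- take the leaf through the origin and let the curvature gain $\int\bigl(\sqrt{\det(I+Dg^TDg)}-1\bigr)\approx\tfrac12\int|Dg|^2$ compensate the loss of domain --- is not just delicate but false for $h\geq 2$: consider the foliation by the translates $\{v=g(u)+c\}$ of a single graph, where $g\equiv a$ outside a small ball $B^h_{\rho}$, $g(0)=0$, and the transition is chosen with $\|g\|_{C^2}$ small and $\int|Dg|^2\ll a^2$ (possible for every $h\geq 2$, e.g.\ by a cone or logarithmic profile; your Cauchy--Schwarz-type accounting does work when $h=1$, but the point constraint $g(0)=0$ has essentially zero capacity once $h\geq 2$, and $C^2$-smallness only caps the derivatives, it does not force the Dirichlet energy to be comparable to $a^2$). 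For this foliation the leaf through the origin is a graph over a ball of radius $\sqrt{1-a^2}$ and its volume is at most $\omega_h(1-a^2)^{h/2}+\tfrac12\int|Dg|^2<\omega_h$; the lemma is saved by a \emph{different} leaf (the translate $g-a$, which is flat near $\partial B^m$ and is a graph over all of $B^h_1$). The two examples together show that no fixed recipe of the form ``take the leaf through $0$'' or ``take the leaf lying over $B^h_1$'' can work, and that a leaf cannot simply be compared with the flat disk beneath it: the selection of the right leaf is the whole difficulty.

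The paper's proof resolves precisely this point by a different comparison. For each leaf $F$ one solves the Plateau problem with boundary $F\cap\partial B$ in the class of graphs (Wang's theorem, available because of the $C^2$-closeness), obtaining a minimal disk $\tilde F$ with $\vol_h(F)\geq\vol_h(\tilde F)$; the estimates of Wang give continuous dependence of $\tilde F$ on the leaf, so a continuity/degree argument produces a leaf whose minimal filling passes through the origin; and then the monotonicity formula for minimal submanifolds yields $\vol_h(\tilde F)\geq\omega_h$. In other words, monotonicity for the minimal filling plays the role you wanted the pointwise estimate $\sqrt{\det(I+Dg^TDg)}\geq 1$ to play, and the continuous selection of fillings replaces your step (2). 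If you want to salvage your approach you would need some substitute for this selection mechanism; as written, both of your proposed leaf choices fail on explicit $C^2$-small foliations.
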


\begin{proof}
By assumption, the leaves of $\mathscr{F}$ are graphs of $\R^{m-h}$-valued maps which are defined on convex domains in $\R^h$. Let $F$ be a leaf in $\mathscr{F}$. Since $F$  is $C^2$-close to a flat $h$-disk, the Plateau problem for minimal $h$-surfaces with boundary $F\cap \partial B$ can be solved in the class of graphs of smooth maps and produces a unique $h$-disk $\tilde{F}$ with minimal $h$-volume among the $h$-disks with the same boundary (see \cite[Theorem 1.1]{wan03}).  Moreover, the estimates of \cite{wan03} allow to prove that $\tilde{F}$ depends continuously on $F\in \mathscr{F}$. In particular, there is a $F\in \mathscr{F}$ such that $0$ belongs to $\tilde{F}$. Then the conclusion follows from the monotonicity formula for minimal submanifolds (see e.g.\ \cite[Corollary 1.13]{cm11}), which implies the second inequality in
\[
\mathrm{vol}_h (F)  \geq
\mathrm{vol}_h (\tilde{F}) \geq \omega_h.
\]
\end{proof}

\begin{rem}
The conclusion of Lemma \ref{foliation} should hold also if $\mathscr{F}$ is not assumed to be $C^2$-close to the affine foliation, and actually also for more general objects than smooth foliations. In the case of the sphere, results of this kind have been proved by F.\ J.\ Almgren in the unpublished manuscript \cite{alm} and by M.\ Gromov in \cite{gro03} (see also \cite{mem11} for a detailed proof of Gromov's theorem). The case of the ball is discussed in \cite[Appendix 1.F]{gro83} and in \cite[Section 2]{gut09}.
\end{rem}

\begin{proof}[Proof of Lemma \ref{seintespa}]
If $R$ is small enough, the homothety $x\mapsto x/R$ maps the foliation $\mathscr{F}|_{B^m_R}$ into a foliation of $B^m$ which satisfies the assumptions of Lemma \ref{foliation}. We deduce that $\mathscr{F}|_{B^m_R}$ has a leaf $F$ such that
\[
\mathrm{vol}_h (F) \geq \omega_h R^h.
\]
Since $D\varphi (0)|_{T_0 F}$ is an isomorphism,
up to the choice of a smaller $R$ we can also assume that the restriction of $\varphi$ to $F$ is injective. Then the fact that 
\[
\bigl|\det (D\varphi (x)|_{T_x F}) \bigr|\geq 1, \qquad \forall x\in F,
\]
and the area formula imply that
\[
\vol_h \bigl(\varphi (B^m_R) \bigr) \geq \vol_h \bigl(\varphi (F) \bigr) \geq \vol_h (F) \geq 
\omega_h R^h,
\]
as we wished to prove.
\end{proof}

\section{Preparatory results for Theorem \ref{expa}}
\label{4sec}

Let $1\leq k \leq n$ be fixed once and for all.
In the proof of Theorem \ref{expa} we shall make use of some auxiliary differential forms on $\R^{2n}$, which we now introduce. The symplectic form $\Omega$ is decomposed as 
\[
\Omega = \Omega_k + \widehat{\Omega},
\]
where
\[
\Omega_k := \sum_{j=1}^k dp_j \wedge dq_j, \qquad \widehat{\Omega} := \sum_{j=k+1}^n dp_j \wedge dq_j.
\]
The next object is the $(2k-1)$-form
\[
\alpha:= p_1 dq_1 \wedge dp_2 \wedge dq_2 \wedge \dots \wedge dp_k \wedge 
dq_k,
\]
whose differential is
\[
d\alpha = dp_1 \wedge dq_1 \wedge  \dots \wedge dp_k \wedge 
dq_k = \frac{1}{k!} \Omega_k^k.
\]
Moreover
\begin{equation}
\label{alfabeta}
\Omega^k = (\Omega_k + \widehat{\Omega})^k = \sum_{j=0}^k \binom{k}{j} \Omega_k^{k-j} \wedge \widehat{\Omega}^j = \Omega_k^k + \sum_{j=1}^k \binom{k}{j} \Omega_k^{k-j} \wedge \widehat{\Omega}^j = k! (d\alpha + \beta),
\end{equation}
where $\beta$ is the $2k$-form
\begin{equation}
\label{forbeta}
\beta:= \frac{1}{k!} \sum_{j=1}^k \binom{k}{j} \Omega_k^{k-j} \wedge \widehat{\Omega}^j = \sum_{\substack{1\leq i_1 < \dots < i_k \leq n \\ i_k > k}} dp_{i_1} \wedge dq_{i_1} \wedge \dots \wedge dp_{i_k} \wedge dq_{i_k}.
\end{equation}
In the following lemma we prove two formulas which will be useful in the next sections.

\renewcommand{\theenumi}{\roman{enumi}}
\renewcommand{\labelenumi}{(\theenumi)}

\begin{lem}
\label{formule}
Let $P: \R^{2n} \rightarrow \R^{2n}$ be the orthogonal projector onto the $2k$-dimensional subspace which corresponds to the coordinates $p_1,q_1,\dots,p_k,q_k$. If $X$ and $Y$ are smooth vector fields on $\R^{2n}$, then there holds:
\begin{enumerate}
\item $P^* ( \imath_X d \imath_Y \beta) = \frac{1}{(k-1)!} P^* \bigl( \Omega_k^{k-1} \wedge ( \imath_X d \imath_Y \widehat{\Omega} ) \bigr)$.
\end{enumerate}
If $X_H$ is the Hamiltonian vector field associated to the Hamiltonian $H\in C^{\infty}(\R^{2n})$, then there holds:
\begin{enumerate}
\setcounter{enumi}{1}
\item $P^* ( \imath_{X_H} d\alpha ) = - \frac{1}{(k-1)!} d P^* (H\,\Omega^{k-1})$.
\end{enumerate}
\end{lem}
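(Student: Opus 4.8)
The plan is to prove both identities by pulling back to the $2k$-dimensional subspace $V$ associated to $p_1,q_1,\dots,p_k,q_k$ and exploiting the fact that $P^*$ kills any form containing a factor $dp_j$ or $dq_j$ with $j>k$. Write the decomposition $\R^{2n}=V\oplus V^\perp$ and split each vector field as $X = PX + (I-P)X$, etc. For (i), start from $\beta = \sum_{j=1}^k \frac{1}{k!}\binom{k}{j}\Omega_k^{k-j}\wedge\widehat\Omega^j$ and contract/differentiate. Since $\Omega_k$ is a form on $V$ alone, $d\imath_Y$ acting on $\Omega_k^{k-j}\wedge\widehat\Omega^j$ distributes by Leibniz; after contracting with $X$ and then applying $P^*$, every term that still carries at least two ``hat'' differentials (i.e.\ comes from $\widehat\Omega^j$ with $j\geq 2$, or from a $j=1$ term in which the $d\imath_Y$ did not fully consume the single $\widehat\Omega$-factor's hat-legs) is annihilated by $P^*$. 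Only the $j=1$ summand $\frac{1}{(k-1)!}\Omega_k^{k-1}\wedge\widehat\Omega$ can survive, and only when the operation $\imath_X d\imath_Y$ is applied to the $\widehat\Omega$ factor (not to $\Omega_k^{k-1}$) — otherwise a leftover hat-differential is present and $P^*$ kills it. This yields exactly the claimed formula $P^*(\imath_X d\imath_Y\beta) = \frac{1}{(k-1)!}P^*\bigl(\Omega_k^{k-1}\wedge(\imath_X d\imath_Y\widehat\Omega)\bigr)$. The bookkeeping step — checking that no other term survives — is the only place where care is needed, but it is purely formal once one observes that $P^*$ annihilates $dp_j$ and $dq_j$ for $j>k$ and that $\Omega_k^{k-1}$ already uses all but two of the $V$-directions, so the surviving term is forced.

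For (ii), recall $d\alpha = \frac{1}{k!}\Omega_k^k$, the standard volume form on $V$ (times $k!$... more precisely $\Omega_k^k = k!\,dp_1\wedge dq_1\wedge\cdots\wedge dp_k\wedge dq_k$). First I would use Cartan's magic formula: since $\Omega_k^k$ is closed, $\imath_{X_H}d\alpha = \imath_{X_H}\frac{1}{k!}\Omega_k^k$, and because $\Omega_k^k$ is a top-degree form on the $2k$-dimensional $V$, only the $V$-component $PX_H$ of the Hamiltonian field contributes after applying $P^*$; indeed contracting the $V^\perp$-component of $X_H$ with a form built entirely from $V$-differentials gives zero. Next, observe that $P^*\Omega = P^*\Omega_k = \Omega_k$ (the hat part dies under $P^*$), so $P^*(\Omega^{k-1}) = \Omega_k^{k-1}$, and hence $dP^*(H\,\Omega^{k-1}) = d(P^*H\cdot\Omega_k^{k-1}) = P^*(dH)\wedge\Omega_k^{k-1}$ since $\Omega_k^{k-1}$ is closed. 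The right-hand side is thus $-\frac{1}{(k-1)!}P^*(dH)\wedge\Omega_k^{k-1}$. So the identity reduces to showing $P^*(\imath_{X_H}\frac{1}{k!}\Omega_k^k) = -\frac{1}{(k-1)!}P^*(dH)\wedge\Omega_k^{k-1}$, equivalently $\imath_{PX_H}\Omega_k^k = -k\,P^*(dH)\wedge\Omega_k^{k-1}$ on $V$. Writing $\Omega_k^k = \Omega_k\wedge\Omega_k^{k-1}$ and using $\imath_{PX_H}(\Omega_k\wedge\Omega_k^{k-1}) = (\imath_{PX_H}\Omega_k)\wedge\Omega_k^{k-1} + \Omega_k\wedge\imath_{PX_H}\Omega_k^{k-1}$, together with the fact that $\imath_{X_H}\Omega = -dH$ restricts on $V$ to $\imath_{PX_H}\Omega_k = -P^*(dH)$ (here I need to check that only the $V$-part of $dH$ and of $X_H$ matter, which follows from $\Omega$ being block-diagonal with respect to $V\oplus V^\perp$), one gets by induction on the power that $\imath_{PX_H}\Omega_k^k = k\,(\imath_{PX_H}\Omega_k)\wedge\Omega_k^{k-1} = -k\,P^*(dH)\wedge\Omega_k^{k-1}$. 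Dividing by $k!$ gives the claim.

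The main obstacle I anticipate is not any single computation but keeping the combinatorial/sign bookkeeping straight in (i): one must be confident that among all the terms produced by Leibniz-expanding $\imath_X d\imath_Y(\Omega_k^{k-j}\wedge\widehat\Omega^j)$, precisely one family survives $P^*$, and with the right coefficient $\binom{k}{1}/k! = 1/(k-1)!$. A clean way to organize this is to note that $P^*$ factors through restriction to $V$, that $\widehat\Omega$ and all its derivatives/contractions along $V$-directions still contain hat-differentials unless every hat-leg has been removed, and that removing both hat-legs of a single $\widehat\Omega$ needs exactly the two operations $\imath_Y$ and (the differential-plus-contraction) $\imath_X d$ — which is why $j=1$ is forced and why the $d\imath_Y$ must land on the $\widehat\Omega$ factor. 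With that principle stated once, both parts are short.
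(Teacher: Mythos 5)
Your proposal is correct. For part (i) you follow essentially the paper's own argument: only the summand of $\beta$ with a single ``hat'' pair can survive, because the summands coming from $\widehat\Omega^j$ with $j\geq 2$ carry at least four differentials $dp_i,dq_i$ with $i>k$, of which $\imath_Y$ and $\imath_X$ can remove at most two while $d$ removes none, and in the Leibniz expansion of $\imath_X d\imath_Y\bigl(\Omega_k^{k-1}\wedge\widehat\Omega\bigr)$ the cross terms are killed by $P^*\widehat\Omega=0$, $P^*(\imath_X\widehat\Omega)=0$ and $P^*(d\imath_Y\widehat\Omega)=dP^*(\imath_Y\widehat\Omega)=0$, leaving exactly $\Omega_k^{k-1}\wedge(\imath_Xd\imath_Y\widehat\Omega)$ with coefficient $\binom{k}{1}/k!=1/(k-1)!$; this is the computation the paper spells out. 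For part (ii) your route is genuinely a bit different: the paper writes $d\alpha=\frac{1}{k!}\Omega^k-\beta$, uses $\imath_{X_H}\Omega^k=-k\,d(H\,\Omega^{k-1})$ and the vanishing $P^*(\imath_{X_H}\beta)=0$, so the right-hand side appears as an exact form before $P^*$ is applied and the $\beta$-machinery of (i) is reused; you instead work directly with $d\alpha=\frac{1}{k!}\Omega_k^k$, compute $P^*(\imath_{X_H}\Omega_k^k)=k\,P^*(\imath_{X_H}\Omega_k)\wedge\Omega_k^{k-1}$, use $P^*(\imath_{X_H}\Omega_k)=-P^*(dH)$ (from $\imath_{X_H}\Omega=-dH$ together with $P^*(\imath_{X_H}\widehat\Omega)=0$), and match this with $dP^*(H\,\Omega^{k-1})=P^*(dH)\wedge\Omega_k^{k-1}$. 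Both derivations are sound; yours avoids $\beta$ entirely in (ii) and is slightly more self-contained, while the paper's keeps the two parts of the lemma on the same footing. One small point of phrasing in your version: the pointwise identity for $\imath_{PX_H}\Omega_k$ involves coefficients of $dH$ and $X_H$ evaluated at $x$, whereas $P^*(dH)$ has them evaluated at $Px$; the two agree only after applying $P^*$ (equivalently, after pulling back to $V$), which is what you in fact do, so this is not a gap but deserves a sentence in a written-up proof.
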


\begin{proof}
We clearly have
\begin{equation}
\label{parte}
P^* dp_j = \left\{ \begin{array}{ll} dp_j & \mbox{if } j \leq k, \\ 0 &\mbox{if } j>k , \end{array} \right. \qquad P^* dq_j = \left\{ \begin{array}{ll} dq_j & \mbox{if } j \leq k, \\ 0 &\mbox{if } j>k. \end{array} \right.
\end{equation}
It follows that
\begin{eqnarray}
\label{vanish1}
P^* \widehat{\Omega} &=& 0, \\
\label{vanish2}
P^* (\imath_X \widehat{\Omega}) &=& 0,
\end{eqnarray}
for every vector field $X$.

\medskip

\noindent (i) If $k<i<j\leq n$, the three-form
\[
\imath_{X} d \imath_{Y} (dp_i \wedge dq_i \wedge dp_j \wedge dq_j)
\]
is a sum of elementary three-forms, each of which contains at least a term $dp_h$ or $dq_h$, where $h$ is either $i$ or $j$. By (\ref{parte}), it follows that
\[
P^*\bigl( \imath_{X} d \imath_{Y} (dp_i \wedge dq_i \wedge dp_j \wedge dq_j) \bigr) = 0.
\]
Then, by the last expression for $\beta$ in (\ref{forbeta}), we find
\[
P^* ( \imath_{X} d \imath_{Y} \beta ) = P^* ( \imath_{X} d \imath_{Y} \hat{\beta} ),
\]
where
\[
\hat{\beta} := \sum_{1\leq i_1 < \dots < i_{k-1} \leq k < i_k \leq n} dp_{i_1} \wedge dq_{i_1} \wedge \dots \wedge dp_{i_k} \wedge dq_{i_k}.
\]
Notice that
\[
\hat{\beta} = \left( \sum_{1\leq i_1 < \dots < i_{k-1} \leq k} dp_{i_1} \wedge dq_{i_1} \wedge \dots \wedge dp_{i_{k-1}} \wedge dq_{i_{k-1}} \right) \wedge \widehat{\Omega} = \frac{1}{(k-1)!} \Omega_k^{k-1} \wedge \widehat{\Omega},
\]
and hence
\begin{equation}
\label{finqua}
P^* ( \imath_{X} d \imath_{Y} \beta ) = \frac{1}{(k-1)!} P^* \Bigl( \imath_{X} d \imath_{Y} \bigl(\Omega_k^{k-1} \wedge \widehat{\Omega}\bigr) \Bigr).
\end{equation}
When $k=1$, the above identity is precisely the identity (i) that we wish to prove. So we assume that $k\geq 2$ and compute
\begin{eqnarray*}
\imath_{X} d \imath_{Y} \bigl( \Omega_k^{k-1} \wedge \widehat{\Omega} \bigr) &=& \imath_{X} d \Bigl( (k-1) ( \imath_{Y} \Omega_k ) \wedge \Omega_k^{k-2} \wedge \widehat{\Omega} + \Omega_k^{k-1} \wedge ( \imath_{Y} \widehat{\Omega} ) \Bigr) \\
&=& \imath_{X} \Bigl( (k-1) (d \imath_{Y} \Omega_k ) \wedge \Omega_k^{k-2} \wedge \widehat{\Omega} + \Omega_k^{k-1} \wedge ( d\imath_{Y} \widehat{\Omega}) \Bigr).
\end{eqnarray*} 
By (\ref{vanish1}), $P^* \widehat{\Omega}$ vanishes, and we get
\begin{eqnarray*}
P^*\Bigl( \imath_{X} d \imath_{Y} \bigl( \Omega_k^{k-1} \wedge \widehat{\Omega} \bigr) \Bigr) &=& P^* \Bigl( \imath_{X} \bigl( \Omega_k^{k-1} \wedge ( d\imath_{Y} \widehat{\Omega} \bigr) \Bigr) \\ &=& P^* \Bigl( (k-1) ( \imath_{X} \Omega_k) \wedge \Omega_k^{k-2} \wedge (d\imath_{Y} \widehat{\Omega}) + \Omega_k^{k-1} \wedge ( \imath_{X} d \imath_{Y} \widehat{\Omega}) \Bigr).
\end{eqnarray*}
Since, by (\ref{vanish2}), the two-form $P^*(d \imath_{Y} \widehat{\Omega})=d P^* (\imath_{Y} \widehat{\Omega})$ vanishes, we conclude that
\[
P^*\Bigl( \imath_{X} d \imath_{Y} \bigl( \Omega_k^{k-1} \wedge \widehat{\Omega} \bigr) \Bigr) = P^* \Bigl(  \Omega_k^{k-1} \wedge ( \imath_{X} d \imath_{Y} \widehat{\Omega}) \Bigr).
\]
The thesis in the case $k\geq 2$ follows from the above identity and (\ref{finqua}).

\medskip

\noindent (ii) Since all the summands in the definition of $\beta$ contain the term $\widehat{\Omega}$, (\ref{vanish2}) implies that
\begin{equation}
\label{null}
P^* ( \imath_{X_H} \beta ) = 0.
\end{equation}
Moreover,
\[
\imath_{X_H} \Omega^k = k (\imath_{X_H} \Omega) \wedge \Omega^{k-1} = - k \, dH \wedge \Omega^{k-1} = - k \, d \bigl( H \, \Omega^{k-1} \bigr).
\]
Therefore, by (\ref{alfabeta}),
\[
\imath_{X_H} d\alpha  = \frac{1}{k!} \imath_{X_H} \Omega^k - \imath_{X_H} \beta = - \frac{1}{(k-1)!} d(H\, \Omega^{k-1})-  \imath_{X_H} \beta.
\]
By applying $P^*$, taking (\ref{null}) into account, the formula (ii) follows.
\end{proof}

\section{Strategy of the proof of Theorem \ref{expa}}
\label{5sec}

Let $\{\phi_t\}_{t\in [0,1]}$ be a smooth one-parameter family of symplectic diffeomorphisms of $\R^{2n}$ such that $\phi_0=\Phi$ is linear, and let $\{H_t\}_{t\in [0,1]}$ be its generating path of Hamiltonians, that is
\[
\frac{d}{dt} \phi_t(x) = X_{H_t} (\phi_t(x)), \qquad \forall (t,x)\in [0,1]\times \R^{2n},
\]
where $X_{H_t}$ is defined by the identity
\[
\imath_{X_{H_t}} \Omega = - dH_t.
\]
Let $P$ be the orthogonal projector onto the subspace $\R^{2k}$ which is given by the coordinates $p_1,q_1,\dots,p_k,q_k$. This subspace is complex with respect to the complex structure $J$, hence $P$ commutes with $J$.
We assume that also the subspace $\Phi^T \R^{2k}$ is complex.

Let $U$ be an orthogonal and symplectic automorphism of $\R^{2n}$ - that is unitary with respect to the complex structure $J$ - which maps the complex subspace $\R^{2k}$ onto the complex subspace $\Phi^T \R^{2k}$, and set
\[
\Psi := \Phi U.
\]
Then 
\begin{equation}
\label{preserva}
\Psi^T \R^{2k} = (\Phi U)^T \R^{2k} = U^T \Phi^T \R^{2k} = U^{-1} \Phi^T \R^{2k} = \R^{2k},
\end{equation}
so $\Psi^T$ preserves the subspace $\R^{2k}$. Since $\Psi^T$ is symplectic, it preserves also the symplectic orthogonal of $\R^{2k}$, which is also its Euclidean orthogonal. It follows that $\Psi^T$ commutes with the orthogonal projector $P$, and so does $\Psi$:
\[
[\Psi,P] = 0.
\] 
Being obtained from $\phi_t$ by right-composition, the path of symplectic diffeomorphisms
\[
\psi_t(x) := \phi_t ( U x ), 
\]
satisfies
\begin{equation}
\label{hamsist}
\begin{split}
\frac{d}{dt} \psi_t (x) &= X_{H_t} (\psi_t(x)), \\
\psi_0 &= \Psi.
\end{split}
\end{equation}
Since $U$ maps the open unit ball $B\subset \R^{2n}$ onto itself, we have
\[
\psi_t (B) = \phi_t(B).  
\]
Theorem \ref{expa} calls for finding the second order expansion of the function
\[
f(t) :=  \mathrm{vol}_{2k} \bigl( P \phi_t (B) \bigr) =  \mathrm{vol}_{2k} \bigl( P \psi_t (B) \bigr)
\]
at $t=0$. By (\ref{inpiu}) and (\ref{preserva}), we have
\[
P \psi_0(B) = P \Psi (B) = P \Psi ( B\cap \ran (P\Psi)^T ) = P \Psi ( B \cap \Psi^T \R^{2k} ) = P \Psi (B\cap \R^{2k}) = P \Psi(B^{2k}).
\]
Therefore, 
\[
\partial P \psi_0 (B) = P \psi_0 (S^{2k-1}),
\]
where $S^{2k-1}=\partial B^{2k}$ denotes the unit sphere in $\R^{2k}$. The implicit mapping theorem implies that if $\bar{t}\in ]0,1]$ is small enough, there is a smooth one-parameter family of embeddings
\[
x_t : S^{2k-1} \hookrightarrow \partial B, \qquad t\in [0,\bar{t}],
\]
such that
\begin{equation}
\label{bordo}
 \partial P \psi_t (B)= P  \psi_t (x_t(S^{2k-1})), \qquad \forall t\in [0,\bar{t}\,],
\end{equation}
and $x_0$ is the identity mapping on $S^{2k-1}$. Actually, we may also impose the normalization condition
\begin{equation}
\label{norm}
P x_t(\theta) \in \R^+ x_0(\theta), \qquad  \forall \theta\in S^{2k-1},
\end{equation}
which uniquely determines the embeddings $x_t$. 

Given a smooth map $x:S^{2k-1} \rightarrow \R^{2n}$, we set
\[
V_t(x) := \int_{S^{2k-1}} (\psi_t \circ x)^* \alpha,
\]
the $(2k-1)$-form $\alpha$ being defined in Section \ref{4sec}.
If $x$ is the boundary of the map $\tilde{x}:B^{2k} \rightarrow \R^{2n}$, Stokes theorem and the identity $P^* \alpha = \alpha$ (see (\ref{parte})) imply that
\begin{eqnarray*}
V_t (x) &=& \int_{S^{2k-1}} (\psi_t \circ x)^* \alpha = \int_{B^{2k}} (\psi_t \circ \tilde{x})^* d\alpha =  \int_{B^{2k}} (\psi_t \circ \tilde{x})^* P^* d\alpha \\ &=& \int_{B^{2k}} (P\circ \psi_t \circ \tilde{x})^* d\alpha  = \int_{B^{2k}} (P\circ \psi_t \circ \tilde{x})^* dp_1 \wedge dq_1 \wedge \dots \wedge dp_k \wedge dq_k.
\end{eqnarray*}
When $P\circ \psi_t \circ \tilde{x}$ is one-to-one and orientation preserving, then
\[
V_t(x) = \int_{P \circ \psi_t \circ \tilde{x} (B^{2k})}  dp_1 \wedge dq_1 \wedge \dots \wedge dp_k \wedge dq_k = \mathrm{vol}_{2k} \bigl( P\circ \psi_t \circ \tilde{x} (B^{2k})\bigr).
\]
Up to the choice of a smaller $\bar{t}\in ]0,1]$, the above formula holds in particular when $x=x_t$, $t\in [0,\bar{t}]$, for a suitable choice of $\tilde{x}=\tilde{x}_t:B^{2k} \rightarrow B$, because $x_t$ is $C^1$-close to the identity mapping on $S^{2k-1}$ and $\psi_t$ is $C^1$-close to the linear map $\Psi$, which preserves the splitting determined by $P$.  Hence
\begin{equation}
\label{dausare}
f(t) = V_t(x_t) = \int_{S^{2k-1}} (\psi_t \circ x_t)^* \alpha, \qquad \forall t\in [0,\bar{t}].
\end{equation}
Moreover, by (\ref{bordo}) the map $x_t$ is a local maximum of $V_t$ on the space of smooth maps $x:S^{2k-1} \rightarrow \partial B$. Our strategy for proving Theorem \ref{expa} is to use the formula (\ref{dausare}) in order to determine the second order expansion of $f(t)$ at $t=0$.

\section{Computations} 
\label{6sec}

The first aim of this section is to get some information on the family of embeddings $x_t : S^{2k-1} \rightarrow \partial B$, $t\in [0,\bar{t}]$, which is defined by (\ref{bordo}) and (\ref{norm}). 

\begin{lem}
\label{super}
\begin{enumerate} 
\item For every $t\in [0,\bar{t}]$, the map $x_t$ satisfies the functional equation
\[
(I-P)\,  D\psi_t \bigl(x_t(\theta)\bigr) \bigl[ J x_t (\theta)\bigr] = 0, \qquad \forall \theta\in S^{2k-1}.
\]
\item There holds
\[
\frac{d x_t}{d t} \Big|_{t=0} (\theta) = J \Psi^{-1} (I-P) D X_{H_0}(\Psi x_0(\theta)) \Psi J x_0(\theta), \qquad \forall \theta\in S^{2k-1}.
\]
\end{enumerate}
\end{lem}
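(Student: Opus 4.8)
The plan is to interpret the functional equation in (i) as the Lagrange‑multiplier condition attached to the points $x_t(\theta)$, and to obtain (ii) by differentiating that equation at $t=0$, exploiting that $\psi_0=\Psi$ is linear and that $P$, $I-P$, $J$ and $\Psi$ commute suitably.

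\emph{Part (i).} Fix $t\in[0,\bar t]$ and $\theta\in S^{2k-1}$ and set $y:=x_t(\theta)\in\partial B$ and $z:=P\psi_t(y)$, which lies on $\partial\bigl(P\psi_t(B)\bigr)$ by (\ref{bordo}). Since $\psi_t$ is a diffeomorphism and $P$ maps $\R^{2n}$ onto $\R^{2k}$, the map $P\psi_t:\R^{2n}\to\R^{2k}$ is a submersion, hence an open map; therefore $P\psi_t(B)$ is open and $M:=(P\psi_t)^{-1}(z)$ is a smooth submanifold of dimension $2n-2k$ with $T_yM=\ker\bigl(PD\psi_t(y)\bigr)=D\psi_t(y)^{-1}(\ker P)$. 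Now $|x|^2\ge 1$ on $M$, with equality at $y$: one has $|y|=1$ because $y\in\partial B$, and if some $y'\in M$ had $|y'|<1$ then $z=P\psi_t(y')$ would lie in the open set $P\psi_t(B)$, contradicting $z\in\partial\bigl(P\psi_t(B)\bigr)$. Hence $y$ minimizes $x\mapsto|x|^2$ over $M$, so the gradient $2y$ is orthogonal to $T_yM$, i.e.\ $\langle D\psi_t(y)^{-T}y,u\rangle=\langle y,D\psi_t(y)^{-1}u\rangle=0$ for all $u\in\ker P$; this says $D\psi_t(y)^{-T}y\in(\ker P)^\perp=\R^{2k}$, that is $(I-P)D\psi_t(y)^{-T}y=0$. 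To conclude I would use that $\psi_t$ is symplectic and that $P$, hence $I-P$, commutes with $J$: the symplectic relation $D\psi_t^{\,T}JD\psi_t=J$ gives $D\psi_t^{-T}=-JD\psi_tJ$, so $D\psi_t(y)^{-T}y=-J\,D\psi_t(y)[Jy]$, and since $J$ is invertible and commutes with $I-P$ the vanishing of $(I-P)D\psi_t(y)^{-T}y$ is equivalent to $(I-P)D\psi_t(y)[Jy]=0$, which is (i). (One could also read this equation off the Euler–Lagrange equation of the functional $V_t$ of Section~\ref{5sec}, of which $x_t$ is a critical point among maps $S^{2k-1}\to\partial B$, but the argument above looks shorter.)

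\emph{Part (ii).} The plan is to differentiate $G(t,\theta):=(I-P)D\psi_t(x_t(\theta))[Jx_t(\theta)]\equiv 0$ in $t$ at $t=0$. Because $\psi_0=\Psi$ is linear, $D^2\psi_0=0$ and $D\psi_0\equiv\Psi$; and differentiating $\partial_t\psi_t=X_{H_t}\circ\psi_t$ in the space variable gives $\partial_t(D\psi_t)(x)=DX_{H_t}(\psi_t(x))\,D\psi_t(x)$, hence $\partial_t(D\psi_t)\big|_{t=0}(x)=DX_{H_0}(\Psi x)\,\Psi$. Writing $\dot x_0(\theta):=\tfrac{d}{dt}x_t(\theta)\big|_{t=0}$, the chain rule yields
\[
0=\partial_tG(0,\theta)=(I-P)\Bigl(DX_{H_0}(\Psi x_0(\theta))\,\Psi J x_0(\theta)+\Psi J\dot x_0(\theta)\Bigr).
\]
Now $[\Psi,P]=0$ (from Section~\ref{5sec}) and $[P,J]=0$, so the last term equals $\Psi J(I-P)\dot x_0(\theta)$; solving for it and using $-J^{-1}=J$ gives
\[
(I-P)\dot x_0(\theta)=J\,\Psi^{-1}(I-P)DX_{H_0}(\Psi x_0(\theta))\,\Psi J x_0(\theta).
\]
It then suffices to see that $P\dot x_0(\theta)=0$: differentiating $|x_t(\theta)|^2\equiv 1$ at $t=0$ gives $\langle\dot x_0(\theta),x_0(\theta)\rangle=0$, which, since $x_0(\theta)\in\R^{2k}$, reads $\langle P\dot x_0(\theta),x_0(\theta)\rangle=0$; and the normalization (\ref{norm}), $Px_t(\theta)=r_t(\theta)x_0(\theta)$ with $r_0\equiv 1$, shows that $P\dot x_0(\theta)$ is a scalar multiple of $x_0(\theta)\ne 0$. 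The two conditions force $P\dot x_0(\theta)=0$, so $\dot x_0(\theta)=(I-P)\dot x_0(\theta)$ is exactly the asserted expression. The only step with any real subtlety is the minimality claim in (i), which however is immediate once one notices that $P\psi_t$ is an open map; the rest is routine differentiation together with the algebra of the commuting operators $J$, $P$ and $\Psi$.
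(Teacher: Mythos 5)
Your argument is correct and takes essentially the same route as the paper's: in (i) both proofs reduce to the Lagrange-type condition $\bigl(D\psi_t(x_t(\theta))^T\bigr)^{-1}x_t(\theta)\in\R^{2k}$ and then apply $[P,J]=0$ together with the symplectic identity $J\bigl(D\psi_t^T\bigr)^{-1}=D\psi_t J$, the only difference being that the paper gets the condition from the singularity of $P\psi_t|_{\partial B}$ at $x_t(\theta)$ via a dual vector $\eta$, whereas you get it by minimizing $|x|^2$ on the fiber $(P\psi_t)^{-1}\bigl(P\psi_t(x_t(\theta))\bigr)$, which in fact supplies the justification (openness of $P\psi_t(B)$ plus (\ref{bordo})) that the paper leaves implicit. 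Part (ii) coincides with the paper's proof: differentiate the functional equation at $t=0$ using $D\psi_t=\Psi+t\,DX_{H_0}(\Psi\,\cdot)\,\Psi+o(t)$ and $D^2\psi_0=0$, and obtain $P\dot x_0=0$ from $|x_t|\equiv 1$ together with the normalization (\ref{norm}).
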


\begin{proof}
(i) Fix $t\in [0,\bar{t}]$ and $\theta\in S^{2k-1}$. The point $x:= x_t(\theta)$ is a singular point for the map
\[
\partial B \to \R^{2k}, \quad z \mapsto P \psi_t(z).
\]
Therefore, there is a non-zero vector $\eta\in \R^{2k}$ such that 
\[
P D\psi_t (x) [\xi] \cdot \eta = 0,
\]
for every $\xi$ in $T_{x} \partial B$. Since $P\eta=\eta$, we have
\[
P D\psi_t (x) [\xi] \cdot \eta = \xi \cdot D\psi_t (x)^T [\eta],
\]
from which we deduce that the vector
\[
D\psi_t (x)^T [\eta] 
\]
is orthogonal to $T_{x} \partial B$ or, equivalently, is a multiple of $x$. Since $\eta$ belongs to $\R^{2k}$, we deduce that
\[
\bigl(D\psi_t (x)^T \bigr)^{-1} [x] \in \R^{2k}.
\]
By the $J$-invariance of  $\R^{2k}$, this is equivalent to
\[
(I-P) J \bigl(D\psi_t (x)^T \bigr)^{-1} [x] = 0.
\]
Since $D\psi_t(x)$ belongs to the symplectic group, 
\[
J \bigl(D\psi_t (x)^T \bigr)^{-1} = D \psi_t(x) J,
\]
and we conclude that
\[
(I-P)\,  D \psi_t(x) [Jx] = 0,
\]
which proves (i).

\medskip

\noindent (ii)
We can write the first order expansion
\begin{equation}
\label{expansion}
x_t(\theta) = x_0 (\theta) + t y(\theta) + o(t), \qquad \mbox{for } t\rightarrow 0, \end{equation}
where the smooth map 
\[
y:S^{2k-1}\rightarrow \R^{2n}, \quad y(\theta) := \frac{d x_t}{d t} \Big|_{t=0} (\theta),
\]
is to be determined. From the condition $|x_t(\theta)|=1$ we deduce that
\begin{equation}
\label{n1}
y(\theta) \cdot x_0 (\theta) = 0, \qquad \forall \theta\in S^{2k-1},
\end{equation}
while the normalization condition (\ref{norm}) implies
\begin{equation}
\label{n2}
P y(\theta) \in \R x_0 (\theta), \qquad \forall \theta\in S^{2k-1}.
\end{equation}
By (\ref{n1}) we get
\[
0 = y \cdot x_0 = y \cdot P x_0 = Py \cdot x_0,
\]
which together with (\ref{n2}) implies that $Py=0$. Differentiating (\ref{hamsist}) with respect to $x$, we find
\[
\begin{split}
\frac{d}{dt} D \psi_t(x) &= D X_{H_t} (\psi_t(x)) D\psi_t(x), \\
D\psi_0(x) &= \Psi,
\end{split}
\]
from which we deduce that
\[
D \psi_t (x) = \Psi + t D X_{H_0}(\Psi x) \Psi + o(t), \qquad \mbox{for } t\rightarrow 0.
\]
Together with (i) and (\ref{expansion}), this implies that
\[
(I-P) \bigl( \Psi + t D X_{H_0} ( \Psi x_0 + t \Psi y + o(t) )\Psi + o(t) \bigr)  J (x_0 + ty + o(t))= 0.
\]
By expanding this expression and by collecting the terms which are linear in $t$, we find that
\begin{equation}
\label{tmp}
(I-P) \bigl( \Psi J y + DX_{H_0} (\Psi x_0) \Psi J x_0 \bigr) = 0.
\end{equation}
Since $Py=0$ and $P$ commutes with both $\Psi$ and $J$, we have $P\Psi J y=0$, and (\ref{tmp}) implies that
\[
\Psi J y = (I-P) \Psi J y = - (I-P) D X_{H_0} (\Psi x_0) \Psi J x_0,
\]
from which we conclude that
\[
y = J \Psi^{-1} (I-P) D X_{H_0} (\Psi x_0) \Psi J x_0,
\]
as claimed.
\end{proof}

We can now use the formula (\ref{dausare}) and compute the first derivative of $f$.

\begin{lem}
\label{der1}
There holds
\[
f'(t) = \int_{S^{2k-1}} (\psi_t \circ x_t)^*( \imath_{X_{H_t}} d \alpha), 
\]
and $f'(0)=0$.
\end{lem}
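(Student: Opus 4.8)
The plan is to differentiate the identity $f(t)=V_t(x_t)=\int_{S^{2k-1}}(\psi_t\circ x_t)^*\alpha$ of (\ref{dausare}), treating separately the two sources of $t$-dependence: the path $\psi_t$ and the path of embeddings $x_t$. Setting $g(s,u):=V_s(x_u)=\int_{S^{2k-1}}(\psi_s\circ x_u)^*\alpha$, which is jointly smooth in $(s,u)$, the chain rule gives $f'(t)=\partial_s g(t,t)+\partial_u g(t,t)$. The second term equals $\frac{d}{du}\big|_{u=t}V_t(x_u)=DV_t(x_t)[\dot x_t]$; since $u\mapsto x_u$ is a smooth path in the space of maps $S^{2k-1}\to\partial B$ and, as observed after (\ref{dausare}), $x_t$ is a local maximum — hence a critical point — of $V_t$ on that space, this term vanishes.

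For the first term, fix $t$ and put $g_s:=\psi_s\circ x_t$; differentiating the Hamiltonian system (\ref{hamsist}) gives $\partial_s g_s(\theta)=X_{H_s}\bigl(g_s(\theta)\bigr)$, so $g_s$ is carried by the (time-dependent) Hamiltonian flow and its velocity field along the image is the restriction of $X_{H_s}$. By the standard formula for the variation of the integral of a pulled-back form (justified, if one wishes, by locally extending the velocity to a genuine vector field, the answer being independent of the extension), together with Cartan's identity $\mathcal{L}_{X_{H_s}}\alpha=\imath_{X_{H_s}}d\alpha+d\,\imath_{X_{H_s}}\alpha$,
\[
\frac{d}{ds}\int_{S^{2k-1}}g_s^*\alpha=\int_{S^{2k-1}}g_s^*\bigl(\mathcal{L}_{X_{H_s}}\alpha\bigr)=\int_{S^{2k-1}}g_s^*\bigl(\imath_{X_{H_s}}d\alpha\bigr)+\int_{S^{2k-1}}d\bigl(g_s^*\,\imath_{X_{H_s}}\alpha\bigr),
\]
and the last integral vanishes by Stokes because $S^{2k-1}$ is a closed manifold. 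Evaluating at $s=t$ yields $\partial_s g(t,t)=\int_{S^{2k-1}}(\psi_t\circ x_t)^*(\imath_{X_{H_t}}d\alpha)$, which is the asserted formula for $f'(t)$.

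It remains to check that $f'(0)=0$. Here $\psi_0=\Psi$ is linear and commutes with $P$ (see Section \ref{5sec}), so $\Psi$ maps the complex subspace $\ran P=\R^{2k}$ onto itself; since $x_0$ is the inclusion $S^{2k-1}\hookrightarrow\R^{2k}$, the map $\Psi\circ x_0$ takes values in $\ran P$, whence $P\circ(\Psi\circ x_0)=\Psi\circ x_0$ and therefore $(\Psi\circ x_0)^*\omega=(\Psi\circ x_0)^*(P^*\omega)$ for every differential form $\omega$ on $\R^{2n}$. Taking $\omega=\imath_{X_{H_0}}d\alpha$ and invoking Lemma \ref{formule}(ii),
\[
(\Psi\circ x_0)^*(\imath_{X_{H_0}}d\alpha)=(\Psi\circ x_0)^*\bigl(P^*(\imath_{X_{H_0}}d\alpha)\bigr)=-\frac{1}{(k-1)!}\,d\Bigl[(\Psi\circ x_0)^*\bigl(P^*(H_0\,\Omega^{k-1})\bigr)\Bigr],
\]
an exact $(2k-1)$-form on $S^{2k-1}$; its integral over the closed manifold $S^{2k-1}$ is zero, so $f'(0)=0$.

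The only genuinely delicate points are the clean separation of the two $t$-dependencies in $\psi_t\circ x_t$ and the careful bookkeeping in the variation-of-pullback formula; once one notices that the variation in $x_t$ is killed by the criticality of $x_t$ established in Section \ref{5sec}, and that every boundary term vanishes because $S^{2k-1}$ is closed, the computation of $f'(t)$ is routine, and $f'(0)=0$ is then immediate from Lemma \ref{formule}(ii) together with the fact that $\Psi$ preserves $\R^{2k}$.
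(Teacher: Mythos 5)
Your proposal is correct and follows essentially the same route as the paper: the $x_t$-variation is killed by the criticality of $x_t$ for $V_t$, the $\psi_t$-variation is computed via Cartan's identity and Stokes (here the velocity field is the genuine vector field $X_{H_t}$, so no extension argument is even needed), and $f'(0)=0$ follows from Lemma \ref{formule}(ii) plus Stokes exactly as in the paper.
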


\begin{proof}
Since $x_t$ is a local maximum of the functional $V_t$ on the space of smooth maps $x:S^{2k-1} \rightarrow \partial B$, we have
\[
f'(t) = dV_t(x_t)\left[ \frac{\partial x_t}{\partial t} \right] + \frac{\partial V_t}{\partial t} (x_t) = \frac{\partial V_t}{\partial t} (x_t).
\]
Moreover, by using Cartan's identity
\[
L_X \alpha = \imath_X d\alpha + d \imath_{X} \alpha
\]
and Stokes' theorem, we find
\begin{eqnarray*}
\frac{\partial V_t}{\partial t} (x) &=& \frac{d}{dt}  \int_{S^{2k-1}} (\psi_t \circ x)^* \alpha =  \int_{x(S^{2k-1})} \frac{d}{dt} (\psi_t^* \alpha) \\ &=& \int_{x(S^{2k-1})} \psi_t^* ( L_{X_{H_t}} \alpha ) = \int_{x(S^{2k-1})} \psi_t^* ( \imath_{X_{H_t}} d\alpha + d \imath_{X_{H_t}} \alpha) \\ &=& \int_{S^{2k-1}} (\psi_t \circ x)^*  ( \imath_{X_{H_t}} d\alpha ) + \int_{S^{2k-1}} d (\psi_t\circ x)^* (\imath_{X_{H_t}} \alpha) = \int_{S^{2k-1}} (\psi_t \circ x)^*  ( \imath_{X_{H_t}} d\alpha ) .
\end{eqnarray*}
The formula for $f'(t)$ follows. In particular,
\[
f'(0) = \int_{S^{2k-1}} (\Psi \circ x_0)^*( \imath_{X_{H_0}} d\alpha ).
\]
Since $\Psi \circ x_0 (S^{2k-1})=\Psi S^{2k-1}$ is contained in $\R^{2k}$, Lemma \ref{formule} (ii) implies that
\begin{eqnarray*}
(\Psi \circ x_0)^*( \imath_{X_{H_0}} d\alpha ) &=& (P \circ \Psi \circ x_0)^*( \imath_{X_{H_0}} d\alpha ) = (\Psi \circ x_0)^*\bigl( P^* (\imath_{X_{H_0}} d\alpha )\bigr) \\ &=& - \frac{1}{(k-1)!} (\Psi \circ x_0)^*\bigl( d P^* (H_0 \Omega^{k-1}) \bigr) =  - \frac{1}{(k-1)!}  d (\Psi \circ x_0)^*(H_0 \Omega^{k-1}).
\end{eqnarray*}
Hence $f'(0)=0$, again by Stokes' theorem.
\end{proof}

We can now get a first formula for the second derivative of $f$.

\begin{lem}
\label{der2}
There holds
\[
f''(t) = \int_{S^{2k-1}} (\psi_t \circ x_t)^* \left( - \imath_{X_{H_t}+Y_t} d \imath_{X_{H_0}} \beta + \imath_{X_{\frac{d H_t}{d t}}} d\alpha \right),
\]
where $Y_t$ is a smooth vector field on $\R^{2n}$ such that
\[
Y_t\bigl(\psi_t \circ x_t (\theta) \bigr)= D\psi_t(x_t(\theta)) \frac{dx_t}{dt} (\theta), \qquad \forall \theta\in S^{2k-1}.
\]
In particular, for $t=0$,
\[
f''(0) = - \frac{1}{(k-1)!} \int_{S^{2k-1}} x_0^* \bigl( \Omega_k^{k-1} \wedge \Psi^*(\imath_{X_{H_0}+Y} d \imath_{X_{H_0}} \widehat{\Omega} ) \bigr),
\]
where $Y$ is the vector field
\[
Y(x) := \Psi J \Psi^{-1} (I-P) D X_{H_0} (x) \Psi J \Psi^{-1} x.
\]
\end{lem}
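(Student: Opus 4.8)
The plan is to differentiate once more the formula of Lemma \ref{der1}, reading
\[
f'(t)=\int_{\gamma_t(S^{2k-1})}\imath_{X_{H_t}}d\alpha,\qquad \gamma_t:=\psi_t\circ x_t,
\]
as the integral of the time-dependent $(2k-1)$-form $\imath_{X_{H_t}}d\alpha$ over the moving cycle $\gamma_t(S^{2k-1})$. For $t$ small $\gamma_t$ is an embedding, since (as noted in Section \ref{5sec}) it is $C^1$-close to the embedding $\Psi|_{S^{2k-1}}$, and its velocity is
\[
\frac{d}{dt}\gamma_t(\theta)=X_{H_t}\bigl(\gamma_t(\theta)\bigr)+D\psi_t\bigl(x_t(\theta)\bigr)\dot x_t(\theta)=(X_{H_t}+Y_t)\bigl(\gamma_t(\theta)\bigr),
\]
where $Y_t$ is any smooth extension of the vector field prescribed in the statement; the choice of extension is immaterial, since after Cartan's formula and Stokes' theorem below $Y_t$ enters only through $\imath_{Y_t}(\cdots)$ restricted to the cycle $\gamma_t(S^{2k-1})$, which depends on $Y_t$ only through its values there.

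First I would apply the transport formula: for a closed manifold $N$, a smooth family of embeddings $g_t:N\rightarrow\R^{2n}$ with velocity $W_t$ (that is, $\frac{d}{dt}g_t=W_t\circ g_t$), and a smooth family of forms $\eta_t$,
\[
\frac{d}{dt}\int_{g_t(N)}\eta_t=\int_{g_t(N)}\bigl(\dot\eta_t+L_{W_t}\eta_t\bigr)=\int_{g_t(N)}\bigl(\dot\eta_t+\imath_{W_t}d\eta_t\bigr),
\]
the exact term $d\imath_{W_t}\eta_t$ produced by Cartan's identity dropping out by Stokes' theorem. Taking $N=S^{2k-1}$, $g_t=\gamma_t$, $W_t=X_{H_t}+Y_t$, $\eta_t=\imath_{X_{H_t}}d\alpha$, and writing $\dot H_t:=\frac{dH_t}{dt}$ (so that $\frac{d}{dt}X_{H_t}=X_{\dot H_t}$ by linearity), one obtains
\[
f''(t)=\int_{S^{2k-1}}\gamma_t^*\Bigl(\imath_{X_{\dot H_t}}d\alpha+\imath_{X_{H_t}+Y_t}\,d\,\imath_{X_{H_t}}d\alpha\Bigr).
\]
To put the second term in the stated shape I would use (\ref{alfabeta}): from $\imath_{X_{H_t}}d\alpha+\imath_{X_{H_t}}\beta=\frac{1}{k!}\imath_{X_{H_t}}\Omega^k$ and $\imath_{X_{H_t}}\Omega^k=k(\imath_{X_{H_t}}\Omega)\wedge\Omega^{k-1}=-k\,d(H_t\,\Omega^{k-1})$, applying $d$ gives $d\,\imath_{X_{H_t}}d\alpha=-d\,\imath_{X_{H_t}}\beta$. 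Substituting yields the first displayed identity of the lemma.

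For the value at $t=0$ I would specialize $\psi_0=\Psi$, hence $D\psi_0\equiv\Psi$, and $x_0=\mathrm{id}_{S^{2k-1}}$, hence $\gamma_0=\Psi|_{S^{2k-1}}$; observe that $\gamma_0(S^{2k-1})\subset\R^{2k}$, since $[\Psi,P]=0$ forces $\Psi(\R^{2k})=\R^{2k}$, on which $\Psi$ restricts to a symplectomorphism of $(\R^{2k},\Omega_k)$. By Lemma \ref{super}(ii) and $D\psi_0\equiv\Psi$, the field $Y_0$ agrees on $\Psi(S^{2k-1})$ with $Y(x)=\Psi J\Psi^{-1}(I-P)D X_{H_0}(x)\Psi J\Psi^{-1}x$. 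The summand $\int_{S^{2k-1}}(\Psi\circ x_0)^*(\imath_{X_{\dot H_0}}d\alpha)$ vanishes: since $P\circ\Psi\circ x_0=\Psi\circ P\circ x_0=\Psi\circ x_0$ one may insert $P^*$, and Lemma \ref{formule}(ii) exhibits $P^*(\imath_{X_{\dot H_0}}d\alpha)$ as an exact form, so Stokes' theorem finishes it --- this is verbatim the computation of $f'(0)$ in Lemma \ref{der1}. For the remaining summand, inserting $P^*$ in the same way and applying Lemma \ref{formule}(i) turns $-\imath_{X_{H_0}+Y_0}\,d\,\imath_{X_{H_0}}\beta$ into $-\frac{1}{(k-1)!}\,\Omega_k^{k-1}\wedge P^*(\imath_{X_{H_0}+Y_0}\,d\,\imath_{X_{H_0}}\widehat{\Omega})$; pulling back by $\Psi\circ x_0$, absorbing the surviving $P^*$ once more, and using $(\Psi\circ x_0)^*\Omega_k^{k-1}=x_0^*\Omega_k^{k-1}$ (because $\Psi|_{\R^{2k}}$ preserves $\Omega_k|_{\R^{2k}}$) gives exactly the stated formula for $f''(0)$.

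The main obstacle, I expect, is one of rigour and bookkeeping rather than of ideas: making precise the differentiation-under-the-integral step for a time-dependent form over a moving cycle --- in particular verifying that $\gamma_t$ is an embedding for $t$ small, so that $Y_t$ is unambiguously defined along the cycle and the result is independent of its extension --- and then keeping the several insertions of $P^*$ and pull-backs by $\Psi$ straight in the $t=0$ specialization. Beyond this, only Cartan's magic formula, Stokes' theorem, and Lemmata \ref{super} and \ref{formule} are needed.
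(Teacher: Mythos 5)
Your proposal is correct and follows essentially the same route as the paper: differentiate the formula of Lemma \ref{der1} along the moving cycle $\psi_t\circ x_t$ via Cartan's identity and Stokes' theorem with velocity $Z_t=X_{H_t}+Y_t$, replace $d\alpha$ by $-\beta$ using (\ref{alfabeta}) and the exactness of $\imath_{X_{H_t}}\Omega^k$, then at $t=0$ identify $Y_0$ with $Y$ via Lemma \ref{super}(ii), kill the $\imath_{X_{\dot H_0}}d\alpha$ term by Lemma \ref{formule}(ii) and Stokes, and reduce through $P^*$ and Lemma \ref{formule}(i) exactly as the paper does. Your derivation producing $\imath_{X_{H_t}}$ (rather than the $\imath_{X_{H_0}}$ printed in the statement) matches the paper's own proof, the $H_0$ in the displayed formula being a harmless slip since only the value at $t=0$ is used afterwards.
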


\begin{proof}
By deriving the expression for $f'(t)$ of Lemma \ref{der1}, we find, by using Cartan's identity and Stokes' theorem as in the proof of Lemma \ref{der1},
\begin{eqnarray*}
f''(t) &=& \int_{S^{2k-1}} (\psi_t \circ x_t)^* \left( L_{Z_t} \imath_{X_{H_t}} d\alpha + \frac{d}{d t}  \imath_{X_{H_t}} d\alpha \right) \\ &=& \int_{S^{2k-1}} (\psi_t \circ x_t)^* \left( \imath_{Z_t} d \imath_{X_{H_t}} d \alpha + \frac{d}{d t}  \imath_{X_{H_t}} d\alpha \right),
\end{eqnarray*}
where $Z_t$ is a vector field on $\R^{2n}$ such that
\[ 
Z_t\bigl(\psi_t \circ x_t (\theta) \bigr) = \frac{d}{dt} \bigl( \psi_t \circ x_t (\theta) \bigr) = X_{H_t}(\psi_t \circ x_t(\theta)) + D\psi_t(x_t(\theta)) \frac{dx_t}{dt} (\theta), \qquad \forall \theta\in S^{2k-1}.
\]
Here we can write
\[
Z_t = X_{H_t} + Y_t,
\]
where $Y_t$ is a vector field on $\R^{2n}$ such that
\[
Y_t\bigl(\psi_t \circ x_t (\theta) \bigr)= D\psi_t(x_t(\theta)) \frac{dx_t}{dt} (\theta), \qquad \forall \theta\in S^{2k-1}.
\]
Since
\[
\frac{d}{d t} \imath_{X_{H_t}} d \alpha = \imath_{\frac{d}{d t} X_{H_t}} d \alpha = \imath_{X_{\frac{d H_t}{d t}}} d\alpha,
\]
we find
\begin{equation}
\label{prov}
f''(t) = \int_{S^{2k-1}} (\psi_t \circ x_t)^* \left( \imath_{X_{H_t}+Y_t} d \imath_{X_{H_t}} d\alpha + \imath_{X_{\frac{d H_t}{d t}}} d\alpha \right).
\end{equation}
From the identity (\ref{alfabeta}) we have
\[
d\alpha = \frac{1}{k!} \Omega^k - \beta,
\]
which, together with
\[
d\imath_{X_{H_t}} \Omega^k = d \bigl( k (\imath_{X_{H_t}} \Omega) \wedge \Omega^{k-1} \bigr) = d( - k dH_t \wedge \Omega^{k-1} ) = 0,
\]
implies that we can replace $d\alpha$ with $-\beta$ in the first term of (\ref{prov}), and we find the desired formula for $f''(t)$. 

For $t=0$ we have, by Lemma \ref{super} (ii),
\[
Y_0\bigl(\Psi x_0(\theta)\bigr) = \Psi J \Psi^{-1} (I-P) D X_{H_0}(\Psi x_0(\theta)) \Psi J x_0(\theta), \qquad \forall \theta\in S^{2k-1}.
\]
Therefore, we can choose as $Y_0$ the vector field
\[
Y(x) := \Psi J \Psi^{-1} (I-P) DX_{H_0} (x) \Psi J \Psi^{-1} x
\] 
and we find the formula
\[
f''(0) = \int_{S^{2k-1}} (\Psi\circ x_0)^* \left( - \imath_{X_{H_0}+ Y} d \imath_{X_{H_0}} \beta + \imath_{X_K} d\alpha \right),
\]
where
\[
K(x) := \frac{d H_t}{d t} (x) \Bigr|_{t=0}.
\]
Arguing as in the last part of Lemma \ref{der1}, Lemma \ref{formule} (ii) implies that the $(2k-1)$-form
\[
(\Psi\circ x_0)^* (\imath_{X_K} d\alpha)
\]
is exact, and by Stokes' theorem we find the formula
\[
f''(0) = - \int_{S^{2k-1}} (\Psi\circ x_0)^* \bigl( \imath_{X_{H_0}+ Y} d \imath_{X_{H_0}} \beta \bigr).
\]
Since $\Psi\circ x_0 = P \circ \Psi\circ x_0$, by Lemma \ref{formule} (i) we have the following chain of identities
\begin{eqnarray*}
(\Psi\circ x_0)^* \bigl( \imath_{X_{H_0}+ Y} d \imath_{X_{H_0}} \beta \bigr) &=& (P\circ \Psi\circ x_0)^* \bigl( \imath_{X_{H_0}+ Y} d \imath_{X_{H_0}} \beta \bigr) \\ &=& (\Psi\circ x_0)^* \bigl( P^* ( \imath_{X_{H_0}+ Y} d \imath_{X_{H_0}} \beta) \bigr) \\ &=&
\frac{1}{(k-1)!} (\Psi\circ x_0)^* \bigl( P^* (\Omega_k^{k-1} \wedge (\imath_{X_{H_0}+Y} d \imath_{X_{H_0}} \widehat{\Omega})) \bigr) \\ &=& \frac{1}{(k-1)!} (\Psi \circ x_0)^* \bigl( \Omega_k^{k-1} \wedge (\imath_{X_{H_0}+Y} d \imath_{X_{H_0}} \widehat{\Omega}) \bigr) \\ &=& \frac{1}{(k-1)!} x_0^* \bigl( \Psi^*(\Omega_k^{k-1}) \wedge \Psi^*(\imath_{X_{H_0}+Y} d \imath_{X_{H_0}} \widehat{\Omega}) \bigr) \\ &=& 
\frac{1}{(k-1)!} x_0^* \bigl( \Omega_k^{k-1} \wedge \Psi^*(\imath_{X_{H_0}+Y} d \imath_{X_{H_0}} \widehat{\Omega}) \bigr),
\end{eqnarray*}
where in the last equality we have used the fact that, commuting with $P$, the symplectic automorphism $\Psi$ preserves $\Omega_k$. The desired formula for $f''(0)$ follows.
\end{proof}

In order to further simplify the expression of $f''(0)$, we need to introduce some extra objects. We identify $\R^{2k}$ with $\C^k$ and we denote by $\mathrm{Gr}_1(\C^k) \cong \C\mathbb{P}^{k-1}$ the Grassmannian of complex lines in $\C^k$, equipped with its standard K\"ahler form
\[
\omega = \frac{i}{2} \partial \bar{\partial} |f|^2,
\]
where $f$ is a holomorphic local right-inverse of the projection $\C^k\setminus\{0\} \rightarrow \C \mathbb{P}^{k-1}$ (see e.g.\ \cite[Section 5.1]{jos02}). Let $\mu$ be the induced volume form on $\mathrm{Gr}_1(\C^k) \cong \C\mathbb{P}^{k-1}$, that is
\[
\mu := \frac{1}{(k-1)!} \omega^{k-1}.
\]
Then we have the following reduction lemma:

\begin{lem}
\label{red}
Let $\eta$ be a smooth one-form on $S^{2k-1}$, $k\geq 1$. Then
\[
\int_{S^{2k-1}} \Omega_k^{k-1} \wedge \eta = (k-1)! \int_{\mathrm{Gr}_1(\C^k)} \left( \int_{L\cap S^{2k-1}} \eta \right) \, \mu(L),
\] 
where the circle $L\cap S^{2k-1}$ is given the orientation induced by the complex structure of $L$.
\end{lem}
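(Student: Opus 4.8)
The plan is to realise the sphere $S^{2k-1}\subset\C^k$ as the total space of the Hopf $S^1$-bundle $\pi:S^{2k-1}\to\mathrm{Gr}_1(\C^k)\cong\C\mathbb{P}^{k-1}$, sending a point $x$ to the complex line $L=\C x$, so that the fibre $\pi^{-1}(L)$ is exactly the circle $L\cap S^{2k-1}$, oriented by the complex structure of $L$. The identity will then follow from Fubini for this fibre bundle, once one knows that the restriction of $\Omega_k^{k-1}$ to $S^{2k-1}$ is the pull-back by $\pi$ of a form on the base, and that this form is precisely $(k-1)!\,\mu$.

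To establish the latter I would set $\lambda_0:=\frac{1}{2}\sum_{j=1}^k(p_j\,dq_j-q_j\,dp_j)$, so that $d\lambda_0=\Omega_k$. On $S^{2k-1}$ the form $\lambda_0$ is invariant under the Hopf action $\theta\cdot x=e^{\theta J}x$ and satisfies $\lambda_0(Jx)\equiv\frac{1}{2}$, hence it is a connection form for $\pi$ (up to the factor $\frac{1}{2}$), and $\Omega_k|_{S^{2k-1}}=d(\lambda_0|_{S^{2k-1}})$ is horizontal and invariant, therefore basic: $\Omega_k|_{S^{2k-1}}=\pi^*\omega'$ for a unique closed two-form $\omega'$ on $\mathrm{Gr}_1(\C^k)$. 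Since $\Omega_k$ and the Kähler form $\omega$ are both $U(k)$-invariant and $\mathrm{Gr}_1(\C^k)$ is $U(k)$-homogeneous, $\omega'=c\,\omega$ for a universal constant $c$, which I would pin down to be $1$ by a single normalisation computation: Stokes gives $\int_{S^{2k-1}}\lambda_0\wedge\Omega_k^{k-1}=\int_{B^{2k}}\Omega_k^{k}=k!\,\omega_{2k}=\pi^k$, whereas fibre integration of $\pi^*\bigl((\omega')^{k-1}\bigr)\wedge\lambda_0$, together with $\int_{L\cap S^{2k-1}}\lambda_0=\pi$ and $\int_{\mathrm{Gr}_1(\C^k)}\omega^{k-1}=\pi^{k-1}$, yields $c^{k-1}\pi^k$; comparing forces $c=1$. (Alternatively this identification can simply be quoted as the statement that $\omega$ is the symplectic reduction of $\Omega_k$.) Hence $\Omega_k^{k-1}|_{S^{2k-1}}=\pi^*(\omega^{k-1})=(k-1)!\,\pi^*\mu$.

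The remaining step is the projection (Fubini) formula for the oriented circle bundle $\pi$: for every smooth one-form $\eta$ on $S^{2k-1}$,
\[
\int_{S^{2k-1}}\pi^*\mu\wedge\eta=\int_{\mathrm{Gr}_1(\C^k)}\mu(L)\int_{\pi^{-1}(L)}\eta .
\]
Concretely, working in local coordinates adapted to a local section of $\pi$ and to the Hopf angle $\theta$, one writes $\eta=\eta_{\mathrm{b}}+g\,d\theta$; since $\mu$ has top degree on the base, $\mu\wedge\eta_{\mathrm{b}}$ vanishes, and integrating $\mu\wedge g\,d\theta$ along the fibre first gives the right-hand side, with $\int_0^{2\pi}g\,d\theta=\int_{\pi^{-1}(L)}\eta=\int_{L\cap S^{2k-1}}\eta$. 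Multiplying by $(k-1)!$ and using $\Omega_k^{k-1}|_{S^{2k-1}}=(k-1)!\,\pi^*\mu$ gives the asserted identity; when $k=1$ it degenerates to the trivial equality $\int_{S^1}\eta=\int_{S^1}\eta$.

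The main obstacle is bookkeeping rather than geometry. One must (a) pin down the normalisation constant so that $\Omega_k|_{S^{2k-1}}$ equals $\pi^*\omega$ \emph{exactly}, which is precisely where the chosen normalisation of $\omega$ in the statement enters, and (b) keep the three orientations mutually compatible in the Fubini formula — of $S^{2k-1}$ (as $\partial B^{2k}$), of $\mathrm{Gr}_1(\C^k)$ (via $\mu$), and of the Hopf circles (via the complex structure) — since a sign slip there would reverse the identity; in fact (a) and (b) are intertwined, as getting $c=+1$ rather than $-1$ is exactly the orientation compatibility. Point (a) can in any case be bypassed by taking $\Omega_k|_{S^{2k-1}}=\pi^*\omega$ as the normalisation of $\omega$.
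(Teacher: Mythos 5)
Your argument is correct and shares the paper's skeleton: the Hopf fibration $\pi:S^{2k-1}\to\C\mathbb{P}^{k-1}$, the identification $\Omega_k|_{S^{2k-1}}=\pi^*\omega$, and the projection/fibre-integration formula with $\pi_*\eta(L)=\int_{L\cap S^{2k-1}}\eta$ and $\omega^{k-1}=(k-1)!\,\mu$ (the paper quotes Bott--Tu for the projection formula, you prove it in adapted local coordinates -- a harmless difference). Where you genuinely diverge is in establishing $\Omega_k|_{S^{2k-1}}=\pi^*\omega$: the paper uses $U(k)$-invariance of both forms to reduce to one explicit pointwise computation at $(0,\dots,0,1)$ in an affine chart, whereas you argue that $\Omega_k|_{S^{2k-1}}$ is horizontal and $S^1$-invariant, hence basic, so equal to $\pi^*\omega'$ with $\omega'=c\,\omega$ by homogeneity, and then fix $c$ by comparing $\int_{S^{2k-1}}\lambda_0\wedge\Omega_k^{k-1}=\pi^k$ (Stokes) with the fibre-integrated value $c^{k-1}\pi^k$. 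That comparison only gives $c^{k-1}=1$, which determines $c=1$ for $k$ even but leaves $c=\pm1$ open for $k$ odd; to finish you need precisely the positivity/orientation remark you allude to but do not execute, e.g.\ that $\omega'(v,Jv)=\Omega_k(u,Ju)=|u|^2>0$ for a horizontal lift $u$ of $v$, so $c>0$ -- or simply a check at one point, which is in effect the paper's computation. Likewise, your fallback of \emph{defining} $\omega$ by $\pi^*\omega=\Omega_k|_{S^{2k-1}}$ is not quite free, since the lemma's $\mu$ is built from the concretely normalised K\"ahler form $\omega=\frac{i}{2}\partial\bar\partial|f|^2$, so agreement with that normalisation still has to be verified (again the pointwise check). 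In short: your route makes the geometric mechanism (invariance plus horizontality implies basic) explicit and is perfectly viable, but the normalisation step should be closed by the one-line positivity or pointwise argument; the paper's direct computation buys exactly that sign at the cost of a coordinate calculation.
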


\begin{proof}
We recall that if
\[
\pi : S^{2k-1} \rightarrow \C\mathbb{P}^{k-1}
\]
is the Hopf fibration, then
\begin{equation}
\label{hopf}
\pi^* \omega = \Omega_k|_{S^{2k-1}}.
\end{equation}
Indeed, since both $\Omega_k$ and $\omega$ are invariant with respect to the action of the unitary group of $\C^k$, it is enough to check this identity at the point $(0,\dots,0,1)\in S^{2k-1}$. In the affine coordinate system
\[
\C^{k-1} \rightarrow \C\mathbb{P}^{k-1}, \qquad (\zeta_1,\dots,\zeta_{k-1}) \rightarrow [\zeta_1,\dots,\zeta_{k-1},1],
\]
the K\"ahler form $\omega$ at $[0,\dots,0,1]$ has the form
\[
\omega([0,\dots,0,1]) = \frac{i}{2} \sum_{j=1}^{k-1} d\zeta_j \wedge d \bar{\zeta}_j,
\]
(see e.g.\ \cite[Equation (5.1.5)]{jos02}). By differentiating the formula
\[
\pi (z_1,\dots,z_k) = \Bigl[\frac{z_1}{z_k}, \dots, \frac{z_{k-1}}{z_k},1 \Bigr], \qquad \forall z=(z_1,\dots,z_k) \in S^{2k-1}, \; z_k \neq 0,
\]
at $(0,\dots,0,1)$, we get that
\[
D\pi(0,\dots,0,1)[(\zeta_1,\dots,\zeta_k)] = (\zeta_1,\dots,\zeta_{k-1}), \qquad \forall (\zeta_1,\dots,\zeta_k) \in T_{(0,\dots,0,1)} S^{2k-1},
\]
from which
\[
\pi^* \omega (0,\dots,0,1) = \frac{i}{2} \sum_{j=1}^{k-1} dz_j \wedge d\bar{z}_j = \sum_{j=1}^{k-1} dp_j \wedge dq_j = \Omega_k|_{T_{(0,\dots,0,1)} S^{2k-1}},
\]
which proves (\ref{hopf}).

Let us denote by $\pi_*$ integration along the fiber, which maps $h$-forms on $S^{2k-1}$ to $(h-1)$-forms on $\C\mathbb{P}^{k-1}$. By (\ref{hopf}) and by the adjunction formula (see e.g.\ \cite[Proposition 6.15 (b)]{bt82}), we have
\[
\int_{S^{2k-1}} \Omega^{k-1}_k \wedge \eta = \int_{S^{2k-1}} \pi^* (\omega^{k-1}) \wedge \eta = \int_{\C\mathbb{P}^{k-1}} \omega^{k-1} \wedge \pi_*(\eta).
\]
Since $\eta$ is a one-form, $\pi_*(\eta)$ is the function
\[
\pi_*(\eta)(L) = \int_{\pi^{-1}(L)} \eta = \int_{L \cap S^{2k-1}} \eta, \qquad \forall L \in \mathrm{Gr}_1(\C^k) \cong \C\mathbb{P}^{k-1},
\]
and hence
\[
\int_{S^{2k-1}} \Omega^{k-1}_k \wedge \eta = (k-1)! \int_{\C\mathbb{P}^{k-1}} \left(  \int_{\pi^{-1}(L)} \eta\right) \mu(L),
\]
as claimed.
\end{proof}

Given a loop
\[
z: \R/2\pi \Z \rightarrow \R^{2n},
\]
we denote by
\[
E(z) := \frac{1}{2} \int_0^{2\pi} |z'(\theta)|^2\, d\theta
\]
its energy, and by
\[
A(z) := \frac{1}{2} \int_0^{2\pi} \Omega\bigl[ z(\theta), z'(\theta)\bigr]\, d\theta = \int_{\R/2\pi \Z} z^* \Bigl( \sum_{j=1}^n p_j \, dq_j \Bigr)
\]
the symplectic area bounded by $z$. 

\begin{lem}
\label{der2fin}
There holds
\[
f''(0) = 2 \int_{\mathrm{Gr}_1(\C^k)} \bigl( E(z_L) - A(z_L) \bigr)\, \mu (L),
\]
where $z_L:\R/2\pi \Z \rightarrow \R^{2n}$ is the loop 
\[
z_L (\theta) = \Psi^* \bigl( (I-P) X_{H_0} \bigr) \bigl( e^{\theta J} w_L\bigr),
\]
$w_L$ being a point in $L\cap S^{2k-1}$.
\end{lem}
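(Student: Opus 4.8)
The plan is to substitute the formula for $f''(0)$ provided by Lemma \ref{der2} into the fibration formula of Lemma \ref{red}, and then to compute the resulting fibre integrals explicitly. Since $x_0$ is the inclusion $S^{2k-1}\hookrightarrow\R^{2n}$, the formula of Lemma \ref{der2} reads $f''(0)=-\frac{1}{(k-1)!}\int_{S^{2k-1}}\Omega_k^{k-1}\wedge\eta$, where $\eta:=\Psi^*\bigl(\imath_{X_{H_0}+Y}\,d\,\imath_{X_{H_0}}\widehat{\Omega}\bigr)$ is a one-form, and Lemma \ref{red} turns this into
\[
f''(0)=-\int_{\mathrm{Gr}_1(\C^k)}\Bigl(\int_{L\cap S^{2k-1}}\eta\Bigr)\,\mu(L).
\]
I would then parametrise the fibre circle $L\cap S^{2k-1}$ by $\gamma(\theta):=e^{\theta J}w_L$, $\theta\in\R/2\pi\Z$: this curve stays inside $L\subset\R^{2k}$, traverses the circle with the orientation prescribed in Lemma \ref{red}, and satisfies $\gamma'(\theta)=J\gamma(\theta)$. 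Hence the whole statement reduces to proving that $\int_0^{2\pi}\eta_{\gamma(\theta)}\bigl[J\gamma(\theta)\bigr]\,d\theta=2A(z_L)-2E(z_L)$ for every $L$.

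To unravel $\eta$ I would first pull everything back by $\Psi$. Since $\Psi$ is linear, symplectic and commutes with $P$, it preserves $\Omega_k$ and hence $\widehat{\Omega}=\Omega-\Omega_k$, so $\eta=\imath_{\Psi^*X_{H_0}+\Psi^*Y}\,d\,\imath_{\Psi^*X_{H_0}}\widehat{\Omega}$, where $\Psi^*Z:=\Psi^{-1}\circ Z\circ\Psi$ is the pull-back of a vector field $Z$. Set $\widetilde G:=\Psi^*\bigl((I-P)X_{H_0}\bigr)=(I-P)\Psi^*X_{H_0}$ (the two expressions agree because $\Psi^{-1}$ commutes with $P$); then $z_L=\widetilde G\circ\gamma$ by the definition of $z_L$, and $\widetilde G$ takes values in the $J$-invariant subspace $\widehat V:=\ran(I-P)=(\R^{2k})^{\perp}$. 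Using the identity $\widehat\Omega[Z,v]=\langle J(I-P)Z,v\rangle$ (a consequence of $[J,P]=0$) one finds $\imath_{\Psi^*X_{H_0}}\widehat\Omega=(J\widetilde G)^{\flat}$, the metric dual of $J\widetilde G$; hence $d\,\imath_{\Psi^*X_{H_0}}\widehat\Omega=d(J\widetilde G)^{\flat}$, whose value on a pair of vectors $u,v$ at a point $x$ is the elementary expression $\langle JD\widetilde G(x)u,v\rangle-\langle JD\widetilde G(x)v,u\rangle$. A short chain-rule computation gives $(\Psi^*Y)(x)=JD\widetilde G(x)(Jx)$; combined with $z_L'(\theta)=D\widetilde G(\gamma(\theta))\gamma'(\theta)=D\widetilde G(\gamma(\theta))(J\gamma(\theta))$ this yields $(\Psi^*Y)(\gamma(\theta))=Jz_L'(\theta)$. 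Finally, decomposing $X_{H_0}(\Psi\gamma(\theta))$ along $\R^{2n}=\R^{2k}\oplus\widehat V$ and applying $\Psi^{-1}$, one gets $(\Psi^*X_{H_0})(\gamma(\theta))=r(\theta)+z_L(\theta)$ with $r(\theta):=P\Psi^*X_{H_0}(\gamma(\theta))\in\R^{2k}$, so the contracted ``outer'' vector field $\Psi^*X_{H_0}+\Psi^*Y$ equals $r+z_L+Jz_L'$ along $\gamma$.

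Substituting, $\eta_{\gamma(\theta)}[J\gamma(\theta)]=\langle JD\widetilde G(\gamma)(r+z_L+Jz_L'),J\gamma\rangle-\langle JD\widetilde G(\gamma)(J\gamma),\,r+z_L+Jz_L'\rangle$. The first term equals $\langle D\widetilde G(\gamma)(r+z_L+Jz_L'),\gamma\rangle$ and vanishes identically, because $D\widetilde G$ is $\widehat V$-valued while $\gamma(\theta)=e^{\theta J}w_L\in\R^{2k}=\widehat V^{\perp}$. In the second term $D\widetilde G(\gamma)(J\gamma)=z_L'$, and $\langle Jz_L',\,r+z_L+Jz_L'\rangle=\langle Jz_L',z_L\rangle+|z_L'|^2$ since $Jz_L'\in\widehat V$ is orthogonal to $r\in\R^{2k}$ and $J$ is an isometry; moreover $\langle Jz_L',z_L\rangle=-\langle z_L',Jz_L\rangle=-\Omega[z_L,z_L']$. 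Hence $\eta_{\gamma(\theta)}[J\gamma(\theta)]=\Omega[z_L,z_L']-|z_L'|^2$, and integrating over $\theta\in[0,2\pi]$ gives exactly $2A(z_L)-2E(z_L)$. Plugging this back yields $f''(0)=2\int_{\mathrm{Gr}_1(\C^k)}\bigl(E(z_L)-A(z_L)\bigr)\,\mu(L)$, as claimed.

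The two pull-back computations and the exterior-derivative formula are routine; the one point I expect to require genuine care is the geometric observation that the orthogonal splitting $\R^{2n}=\R^{2k}\oplus\widehat V$, together with the fact that $z_L$ and $z_L'$ are $\widehat V$-valued whereas the fibre curve $\gamma$ and the ``tangential remainder'' $r$ are $\R^{2k}$-valued, annihilates every cross term in the integrand and leaves precisely the energy and symplectic-area densities of $z_L$.
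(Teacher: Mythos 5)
Your proposal is correct and follows essentially the same route as the paper: it combines Lemma \ref{der2} with the reduction Lemma \ref{red}, parametrizes each circle $L\cap S^{2k-1}$ by $\theta\mapsto e^{\theta J}w_L$, and evaluates the one-form $\Psi^*(\imath_{X_{H_0}+Y}\,d\,\imath_{X_{H_0}}\widehat{\Omega})$ along it, identifying the energy and area densities of $z_L$; pulling back by $\Psi$ at the outset and organizing the cancellations via the orthogonal splitting $\R^{2n}=\R^{2k}\oplus\widehat V$ is only a cosmetic repackaging of the paper's pointwise identities $\widehat{\Omega}[u,v]=\Omega[(I-P)u,(I-P)v]=J(I-P)u\cdot(I-P)v$ and $\Psi J=J(\Psi^{-1})^T$. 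As a bonus, your integrand $\Omega[z_L,z_L']-|z_L'|^2$ (fibre integral $2A(z_L)-2E(z_L)$) is the sign-consistent one: the paper's displayed intermediate claim $|z_L'|^2-\Omega[z_L,z_L']$ contains a sign slip, and it is your version that combines correctly with the minus sign in $f''(0)=-\int_{\mathrm{Gr}_1(\C^k)}\bigl(\int_{L\cap S^{2k-1}}\eta\bigr)\,\mu(L)$ to yield the stated formula.
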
 

Here $\Psi^* ( (I-P) X_{H_0} )$ denotes the pull-back by $\Psi$ of the vector field $(I-P)X_{H_0}$, that is
\[
\Psi^* \bigl( (I-P) X_{H_0} \bigr) (x) := \Psi^{-1} (I-P) X_{H_0} (\Psi x), \qquad \forall x\in \R^{2n}.
\]

\begin{proof}
By Lemmata \ref{der2} and \ref{red}, we have
\begin{equation}
\label{quasi}
f''(0) = - \int_{\mathrm{Gr}_1(\C^k)} \left( \int_{L\cap S^{2k-1}} \Psi^*( \imath_{X_{H_0}+Y} d \imath_{X_{H_0}} \widehat{\Omega}) \right)\, \mu (L).
\end{equation}
We fix a complex line $L$ in $\C^k$ and we compute the integral of the one-form $\Psi^*(\imath_{X_{H_0}+Y} d \imath_{X_{H_0}} \widehat{\Omega})$ on $L\cap S^{2k-1}$. We parametrize the latter circle by the map
\[
w: \R/2\pi \Z \rightarrow L\cap S^{2k-1} , \qquad w(\theta) = e^{\theta J} w_L,
\]
where $w_L$ is some point in $L\cap S^{2k-1}$. Since $\widehat{\Omega}$ is a constant two-form, for every pair of vectors $u,v\in \R^{2n}$ we have
\[
d\imath_{X_{H_0}}\widehat{\Omega} (x) [u,v] = \widehat{\Omega}[DX_{H_0} (x) u,v] + \widehat{\Omega}[u,DX_{H_0}(x) v], \quad \forall x\in \R^{2n},
\]
from which we infer that for every vector $u\in \R^{2n}$ there holds
\[
\imath_{X_{H_0}+Y} d\imath_{X_{H_0}} \widehat{\Omega} (x) [u]= \widehat{\Omega}[ DX_{H_0} (x) (X_{H_0} (x) + Y(x)),u] + \widehat{\Omega}[X_{H_0}(x)+Y(x), DX_{H_0} (x) u], 
\]
and 
\begin{eqnarray*}
\Psi^* (\imath_{X_{H_0}+Y} d\imath_{X_{H_0}} \widehat{\Omega}) (x) [u]= & &\widehat{\Omega}[ DX_{H_0} (\Psi x) (X_{H_0} (\Psi x) + Y(\Psi x)),\Psi u] \\ &+& \widehat{\Omega}[X_{H_0}(\Psi x)+Y(\Psi x), DX_{H_0} (\Psi x) \Psi u], 
\end{eqnarray*}
for every $x\in \R^{2n}$. In particular, since $\Psi w' = \Psi Jw$ belongs to the kernel of $\widehat{\Omega}$,
\[
\Psi^* (\imath_{X_{H_0}+Y} d\imath_{X_{H_0}} \widehat{\Omega}) (w) [w'] = \widehat{\Omega}[X_{H_0}(\Psi w) + Y(\Psi w), DX_{H_0}(\Psi w) \Psi w'], \qquad \mbox{on } \R/2\pi \Z.
\]
We compute the two terms
\[
\widehat{\Omega}[X_{H_0}(\Psi w), DX_{H_0}(\Psi w) \Psi w'], \qquad \widehat{\Omega}[Y(\Psi w), DX_{H_0}(\Psi w) \Psi w']
\]
separately. 

By the identity
\[
\widehat{\Omega} [u,v] = \Omega \bigl[ (I-P) u , (I-P) v \bigr], \qquad \forall u,v\in \R^{2n},
\]
the first term equals
\begin{eqnarray*}
\widehat{\Omega}[X_{H_0}(\Psi w), DX_{H_0}(\Psi w) \Psi w'] &=& \Omega[ (I-P) X_{H_0}(\Psi w) , (I-P) DX_{H_0} (\Psi w) \Psi w'] \\ &=& \Omega[ \tilde{z}_L , \tilde{z}_L' ],
\end{eqnarray*}
where
\[
\tilde{z}_L(\theta) := (I-P) X_{H_0} \bigl( \Psi w(\theta) \bigr), \qquad \forall \theta\in \R/2\pi \Z.
\]
By the identity
\[
\widehat{\Omega} [u,v] = J (I-P) u \cdot (I-P) v, \qquad \forall u,v\in \R^{2n},
\]
and by the explicit expression of $Y$ (see Lemma \ref{der2}), the second term equals
\begin{eqnarray*}
&&\widehat{\Omega}[Y(\Psi w), DX_{H_0}(\Psi w) \Psi w'] \\
&& \quad = J (I-P) \Psi J \Psi^{-1} (I-P) DX_{H_0}(\Psi w) \Psi w' \cdot (I-P) DX_{H_0} (\Psi w) \Psi w',
\end{eqnarray*}
and, using the identities $\Psi J = J (\Psi^{-1})^T$, $[J,P]=0$ and $J^2=-I$, we get
\begin{eqnarray*}
&&\widehat{\Omega}[Y(\Psi w), DX_{H_0}(\Psi w) \Psi w'] \\
&& \quad = J (I-P) J (\Psi^{-1})^T \Psi^{-1} (I-P) DX_{H_0}(\Psi w) \Psi w' \cdot (I-P) DX_{H_0} (\Psi w) \Psi w' \\
&& \quad = - \bigl| \Psi^{-1} (I-P) DX_{H_0} (\Psi w) \Psi w' \bigr|^2 = - |z_L'|^2,
\end{eqnarray*}
where
\[
z_L(\theta) := \Psi^{-1} \tilde{z}_L(\theta) = 
\Psi^{-1} (I-P) X_{H_0} \bigl( \Psi w(\theta) \bigr), \qquad \forall \theta\in \R/2\pi \Z.
\]
These computations show that
\[
\Psi^* (\imath_{X_{H_0}+Y} d\imath_{X_{H_0}} \widehat{\Omega}) (w) [w'] = \Omega[z_L,z_L'] - |z_L'|^2,
\]
where we have used the equality $\Omega[\tilde{z}_L,\tilde{z}_L']=\Omega[z_L,z_L']$, which is due to the fact that $\Psi$ is symplectic. By integrating this identity over $\R/2\pi \Z$, we conclude that
\[
\int_{L \cap S^{2k-1}} \Psi^* (\imath_{X_{H_0}+Y} d\imath_{X_{H_0}} \widehat{\Omega})= \int_0^{2\pi} \bigl(  \Omega[z_L,z_L'] - |z_L'(\theta)|^2 ]\bigr) \, dt = 2 A(z_L) - 2 E(z_L),
\]
and the thesis follows from (\ref{quasi}).
\end{proof}

\section{Local non-squeezing}
\label{7sec}

Most of the computation having been performed in the previous section, we can finally prove Theorem \ref{expa}:

\begin{thm}[Second order expansion]
Let $\{\phi_t\}_{t\in [0,1]}$ be a smooth one-parameter family of symplectic diffeomorphisms of $\R^{2n}$ into itself such that $\phi_0 = \Phi$ is linear and let $\{H_t\}_{t\in [0,1]}$ be its generating path of Hamiltonians. Let $P$ be the orthogonal projector onto a complex linear subspace $V\subset \R^{2n}$ of real dimension $2k$, with $1\leq k \leq n$, and assume that also $\Phi^T V$ is a complex subspace. Then
\[
\mathrm{vol}_{2k}\bigl(P \phi_t(B)\bigr) = \omega_{2k} + C \, t^2 + O(t^3), \qquad \mbox{for } t\rightarrow 0,
\]
the number $C$ being defined as
\[
C= C(H_0,\Phi) := \int_{\mathrm{Gr}_1(\Phi^T V)} \bigl( E(\zeta_L) - A(\zeta_L) \bigr)\, \mu (L),
\]
where $\zeta_L:\R/2\pi \Z\rightarrow \R^{2n}$ is the loop
\[
\zeta_L(\theta) = \Phi^* \bigr( (I-P) X_{H_0} \bigr) \bigl( e^{\theta J} \xi_L\bigr),
\]
$\xi_L$ being an arbitrary unit vector in $L\in \mathrm{Gr}_1(\Phi^T V)$.
\end{thm}

\begin{proof}
Up to the left-composition of $\phi_t$ by an orthogonal and symplectic automorphism of $\R^{2n}$ - that is unitary on $\C^n\cong \R^{2n}$ - we may assume that $V=\R^{2k}$, so that $P$ is the projector which was considered in the last two sections. By Theorem \ref{lns} and Lemmata \ref{der1} and \ref{der2fin}, the function
\[
f(t) = \mathrm{vol}_{2k}\bigl(P \phi_t(B)\bigr) = \mathrm{vol}_{2k}\bigl(P \psi_t(B)\bigr)
\]
satisfies
\[
f(0) = \omega_{2k}, \quad f'(0) = 0, \quad f''(0) = 2 \int_{\mathrm{Gr}_1(V)} \bigl( E(z_M) - A(z_M) \bigr)\, \mu (M),
\]
where
\[
z_M (\theta) = \Psi^{-1} (I-P) X_{H_0} \bigl( \Psi e^{\theta J} w_M \bigr), \qquad \forall \theta\in \R/2\pi \Z, \quad w_M \in S^{2k-1}\cap M, \quad M \in \mathrm{Gr}_1(V),
\]
and $\Psi = \Phi U$, $U$ being a unitary automorphism of $\R^{2n}\cong \C^n$ which maps $\R^{2k} \cong \C^k$ onto $\Phi^T \R^{2k}$. We can express the loop $z_M$ in terms of $\Phi$ and $U$ as
\[
z_M(\theta) = U^{-1} \Phi^{-1} (I-P)X_{H_0} \bigl( \Phi U e^{\theta J} w_M \bigr),
\]
and we notice that
\[
E(z_M) = E(\hat{z}_M), \qquad A(z_M) = A(\hat{z}_M),
\]
where
\[
\hat{z}_M(\theta) := U z_M(\theta) = \Phi^{-1} (I-P)X_{H_0} \bigl( \Phi U e^{\theta J} w_M \bigr).
\]
So we may replace $z_M$ by $\hat{z}_M$ in the above formula for $f''(0)$.
If we set, for every $L\in \mathrm{Gr}_1(\Phi^T V)$, 
\[
\xi_{L} := U w_{U^{-1} L}, \qquad \zeta_{L}(\theta) := \Phi^{-1} (I-P) X_{H_0} \bigl( \Phi e^{\theta J} \xi_L\bigr),
\]
we have that $\xi_L$ belongs to $L\cap S^{2k-1}$ and, since $U$ commutes with $e^{\theta J}$,
\[
\zeta_L = \hat{z}_{U^{-1} L}.
\]
From the invariance properties of the standard K\"ahler form of the complex Grassmannians with respect to unitary transformations, we may apply the change of variable $M=U^{-1} L$, $L\in \mathrm{Gr}_1(\Phi^T V)$, and get the identity
\begin{eqnarray*}
f''(0) &=& 2 \int_{\mathrm{Gr}_1(\Phi^T V)} \bigl(E(\hat{z}_{U^{-1} L}) - A(\hat{z}_{U^{-1} L}) \bigr)\, \mu (L) \\ &=& 2 \int_{\mathrm{Gr}_1(\Phi^T V)} \bigl(E(\zeta_{L}) - A(z_{L}) \bigr)\, \mu (L) =: 2 \, C (H_0,\Phi).
\end{eqnarray*}
The conclusion follows from Taylor's formula.
\end{proof}

We recall the following well-known area-energy inequality, whose simple proof is included for sake of completeness:

\begin{lem}
\label{poi}
For every smooth loop $z: \R/2\pi \Z \rightarrow \R^{2n}$, there holds
\[
A(z) \leq E(z),
\]
and the equality holds if and only if $z$ has the form
\begin{equation}
\label{forma}
z(\theta) = \frac{1}{2} \bigl( z(0) + z(\pi) \bigr) + \frac{1}{2} e^{\theta J} \bigl( z(0) - z(\pi) \bigr).
\end{equation}
\end{lem}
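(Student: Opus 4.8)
The plan is to diagonalize both functionals by a Fourier expansion, after which the inequality reduces to the elementary observation that $j^2\ge j$ for every integer $j$.

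First I would pass to the complex picture. Identifying $\R^{2n}$ with $\C^n$ so that $J$ is multiplication by $i$, the relation $\Omega[u,v]=(Ju,v)$ between the symplectic form and the Euclidean inner product gives
\[
A(z) = \frac12\int_0^{2\pi} (Jz(\theta),z'(\theta))\,d\theta .
\]
Expand the smooth loop as a Fourier series $z(\theta)=\sum_{j\in\Z} c_j\, e^{ij\theta}$ with coefficients $c_j\in\C^n$ (and no reality constraint, since the target is already the full complex space $\C^n$); then $z'(\theta)=\sum_{j\in\Z} ij\,c_j\, e^{ij\theta}$ and $Jz(\theta)=\sum_{j\in\Z} (ic_j)\, e^{ij\theta}$. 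Using the orthogonality relation $\int_0^{2\pi} e^{i(j-j')\theta}\,d\theta=2\pi\,\delta_{jj'}$ together with $|iv|=|v|$ for $v\in\C^n$, all the cross terms $j\ne j'$ drop out and one obtains
\[
E(z) = \pi\sum_{j\in\Z} j^2\,|c_j|^2 , \qquad A(z) = \pi\sum_{j\in\Z} j\,|c_j|^2 .
\]

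Subtracting the two identities,
\[
E(z)-A(z) = \pi\sum_{j\in\Z} j(j-1)\,|c_j|^2 \ \ge\ 0,
\]
since $j(j-1)\ge 0$ for every $j\in\Z$, with equality exactly when $j\in\{0,1\}$. This proves the inequality and shows that $A(z)=E(z)$ if and only if $c_j=0$ for all $j\notin\{0,1\}$, i.e.\ $z(\theta)=c_0+e^{\theta J}c_1$. Evaluating this at $\theta=0$ and $\theta=\pi$ gives $c_0=\tfrac12\bigl(z(0)+z(\pi)\bigr)$ and $c_1=\tfrac12\bigl(z(0)-z(\pi)\bigr)$, which is precisely the form~(\ref{forma}).

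The whole argument is a one-line spectral computation, so there is no serious obstacle; the only thing demanding attention is the bookkeeping in the middle step — the interplay between the two occurrences of $i$ (the complex structure $J$ on $\C^n$ and the factor $e^{ij\theta}$ produced by differentiation), the passage between the Hermitian product on $\C^n$ and its real part, and the sign/normalization conventions in $\Omega[z,z']$ — so that the equality case comes out as the loops $z(\theta)=c_0+e^{\theta J}c_1$ (traversed with the positive complex orientation) rather than their conjugates.
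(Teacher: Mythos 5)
Your proof is correct and is essentially the paper's own argument: the expansion $z(\theta)=\sum_j c_j e^{ij\theta}$ with $c_j\in\C^n$ is exactly the paper's expansion $z(\theta)=\sum_k e^{k\theta J}z_k$ under the identification $\R^{2n}\cong\C^n$, and both reduce the claim to $\pi\sum_j j(j-1)|c_j|^2\ge 0$ with equality precisely when only the $j=0,1$ modes survive. The identification of $c_0,c_1$ from $z(0),z(\pi)$ also matches the paper's final step.
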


\begin{proof}
If the loop $z$ has the Fourier expansion
\[
z(\theta) = \sum_{k\in \Z} e^{k\theta J} z_k, \qquad \mbox{with } z_k \in \R^{2n},
\]
we have
\[
E(z) = \pi \sum_{k\in \Z} k^2 |z_k|^2, \qquad
A(z) = \pi \sum_{k\in \Z} k  |z_k|^2.
\]
Therefore,
\begin{eqnarray*}
 E(z) - A(z) = \pi \sum_{k\in \Z} k^2 |z_k|^2 -   \pi \sum_{k\in \Z} k  |z_k|^2  = \pi \sum_{k\in \Z} k(k-1)  |z_k|^2 \geq 0,
\end{eqnarray*}
which proves that $A(z)\leq E(z)$ and that the equality holds if and only if $z_k=0$ for every $k$ different from $0$ and $1$. In the latter case,
\[
z(\theta) = z_0 + e^{\theta J} z_1,
\]
which can be written as (\ref{forma}) by solving the linear system
\[
z(0) = z_0 + z_1, \qquad z(\pi) = z_0 - z_1,
\]
for $z_0$ and $z_1$.
\end{proof}

Together with Lemma \ref{poi}, Theorem \ref{expa} has the following consequence:

\begin{cor*}[Strict local non-squeezing]
Let $\phi_t$, $\Phi$, $H_t$, $P$ and $V$ be as in Theorem \ref{expa}, with $1\leq k \leq n-1$. Assume that the vector field $Z:=\Phi^* ((I-P)X_{H_0})$ does not satisfy the symmetry condition
\begin{equation}
\label{simm2}
Z \left( e^{\theta J} x \right) =  \frac{1}{2} \bigl( Z(x)+Z(-x)\bigr)  + \frac{1}{2} e^{\theta J} \bigl( Z  (x) - Z(-x) \bigr) , \quad \forall x\in \partial B \cap \Phi^T V, \; \forall \theta \in \R/2\pi \Z.
\end{equation}
Then there exists $t_0>0$ such that
\begin{equation}
\label{strict2}
\mathrm{vol}_{2k} \bigl(P\phi_t (B) \bigr) > \omega_{2k} , \qquad \mbox{for every } 0 < t < t_0.
\end{equation}
\end{cor*}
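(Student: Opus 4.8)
The plan is to determine the sign of the constant $C=C(H_0,\Phi)$ of Theorem \ref{expa} from the area--energy inequality of Lemma \ref{poi}, and then to feed this into the second order expansion. Theorem \ref{expa} gives
\[
\mathrm{vol}_{2k}\bigl(P\phi_t(B)\bigr)=\omega_{2k}+C\,t^2+O(t^3),\qquad t\to 0,
\]
so it suffices to prove that, under the stated hypothesis, $C>0$: then (\ref{strict2}) follows at once, because $\mathrm{vol}_{2k}(P\phi_t(B))-\omega_{2k}=t^2\bigl(C+O(t)\bigr)$ is positive for $0<t<t_0$ as soon as $t_0>0$ is chosen so small that $|O(t)|<C$ there.

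To see that $C>0$, write $Z=\Phi^*((I-P)X_{H_0})$, so that the loop entering the definition of $C$ is $\zeta_L(\theta)=Z(e^{\theta J}\xi_L)$; since $e^{\pi J}=-I$, one has $\zeta_L(0)=Z(\xi_L)$ and $\zeta_L(\pi)=Z(-\xi_L)$. First I would observe that the integrand $E(\zeta_L)-A(\zeta_L)$ does not depend on the choice of the unit vector $\xi_L\in L$ (replacing $\xi_L$ by $e^{\varphi J}\xi_L$ only shifts the variable of the loop, and both $E$ and $A$ are invariant under such shifts), and that, choosing $\xi_L$ locally smoothly in $L$, the function $L\mapsto E(\zeta_L)-A(\zeta_L)$ is continuous on the compact manifold $\mathrm{Gr}_1(\Phi^T V)$. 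By Lemma \ref{poi} this function is non-negative, hence $C\ge 0$, and $C=0$ if and only if it vanishes identically, i.e.\ $E(\zeta_L)=A(\zeta_L)$ for every $L$. By the equality case of Lemma \ref{poi}, the latter holds if and only if, for every $L$ and every $\theta$,
\[
Z(e^{\theta J}\xi_L)=\frac{1}{2}\bigl(Z(\xi_L)+Z(-\xi_L)\bigr)+\frac{1}{2}e^{\theta J}\bigl(Z(\xi_L)-Z(-\xi_L)\bigr).
\]
Now every unit vector $x\in\Phi^T V$ lies on the complex line $\C x\in\mathrm{Gr}_1(\Phi^T V)$ and may be taken as the corresponding $\xi_{\C x}$; hence, taken over all $L$, these identities amount exactly to the symmetry condition (\ref{simm2}). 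Therefore $C=0$ is equivalent to (\ref{simm2}), and the hypothesis that $Z$ violates (\ref{simm2}) yields $C>0$, which completes the argument.

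I do not expect a genuine obstacle here; the step requiring the most care is the equivalence between $C=0$ and (\ref{simm2}), which combines the fact that a non-negative continuous function on the compact $\mathrm{Gr}_1(\Phi^T V)$ with vanishing integral must vanish identically with the observation that $(L,\xi_L,\theta)\mapsto e^{\theta J}\xi_L$ sweeps out all of $\partial B\cap\Phi^T V$. Everything else is a direct invocation of Theorem \ref{expa} and Lemma \ref{poi}. Finally, the restriction $1\le k\le n-1$ serves only to make the hypothesis non-vacuous: for $k=n$ one has $I-P=0$, so $Z\equiv 0$ and (\ref{simm2}) holds trivially.
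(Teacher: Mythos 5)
Your proof is correct and follows essentially the same route as the paper: invoke the expansion of Theorem \ref{expa}, use Lemma \ref{poi} to get $C\geq 0$ with equality exactly in the symmetric case, and identify the vanishing of $C$ with condition (\ref{simm2}) by letting $\xi_L$ range over all unit vectors of $\Phi^T V$. Your explicit continuity argument on the compact Grassmannian (to pass from $\int(E(\zeta_L)-A(\zeta_L))\,\mu(L)=0$ to pointwise vanishing) is a detail the paper leaves implicit, but it is the same proof.
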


\begin{proof}
By Theorem \ref{expa},
\begin{equation}
\label{espa}
\mathrm{vol}_{2k} \bigl(P\phi_t (B) \bigr) = \omega_{2k} + C(H_0,\Phi) \, t^2 + O(t^3), \qquad \mbox{for } t\rightarrow 0.
\end{equation}
Lemma \ref{poi} implies that $C(H_0,\Phi)\geq 0$ and that it equals zero if and only if
\[
\zeta_L(\theta) = \frac{1}{2} \bigl( \zeta_L(0) + \zeta_L(\pi) \bigr) + \frac{1}{2} e^{\theta J} \bigl( \zeta_L(0) - \zeta_L(\pi) \bigr),
\]
for every $L\in \mathrm{Gr}_1(\Phi^T V)$. Since $\zeta_L(\theta) = Z(e^{\theta J} \xi_L)$, the latter condition is equivalent to
\[
Z \left( e^{\theta J} \xi_L \right) =  \frac{1}{2} \bigl( Z(\xi_L)+Z(-\xi_L)\bigr)  + \frac{1}{2} e^{\theta J} \bigl( Z  (\xi_L) - Z(-\xi_L) \bigr),
\]
for every $L\in \mathrm{Gr}_1(\Phi^T V)$. Since $\xi_L$ is an arbitrary point in $L \cap \partial B$, the last condition is equivalent to (\ref{simm2}). So, when (\ref{simm2}) does not hold, $C(H_0,\Phi)$ is positive and (\ref{strict2}) follows from (\ref{espa}).
\end{proof}

\begin{rem*}
The above corollary implies a strict non-squeezing inequality also for the first formulation of the local problem. Indeed,
if $\phi:\R^{2n} \rightarrow \R^{2n}$ is a symplectic diffeomorphism and $\{\phi_t\}_{t\in [0,1]}$ is the path defined by (\ref{path}), then the vector field $Z$ appearing above is easily shown to be the quadratic map
\[
Z(x) = \frac{1}{2} D\phi(0)^{-1} (I-P) D^2 \phi(0)[x,x].
\]
In this case, condition (\ref{simm2}) is equivalent to
\begin{equation}
\label{simm3}
(I-P) D^2 \phi(0) [Jx,x] = 0, \qquad \forall x\in D\phi(0)^T V.
\end{equation}
We conclude that if (\ref{simm3}) does not hold then there exists $R_0>0$ such that
\[
\mathrm{vol}_{2k} \bigl( P \phi (B_R) \bigr) > \omega_{2k} R^{2k},
\]
for every $0<R<R_0$.
\end{rem*}

We conclude the paper by discussing the relationship between the middle-dimensional non-squeezing question which is addressed here and the following conjecture of C.~Viterbo's \cite[Section 5]{vit00}: if 
\[
c: \{\mbox{open subsets of }\R^{2n}\} \rightarrow [0,+\infty]
\]
is a symplectic capacity (see \cite[Chapter 2]{hz94} for the list of axioms which define such an invariant) and if $A\subset \R^{2n}$ is an open bounded convex set, then
\begin{equation}
\label{vite}
\frac{c(A)}{\pi} \leq \left( \frac{\mathrm{vol}_{2n}(A)}{\omega_{2n}} \right)^{1/n}.
\end{equation}
Let $c$ be a symplectic capacity which satisfies the following additional property: $c(PA)\geq c(A)$ whenever $P$ is the projector onto a symplectic subspace $V$ along its symplectic orthogonal (here $c(PA)$ is the capacity of $PA$ as an open subset of $V$). For instance, the cylindrical symplectic capacity
\[
c_Z(A) := \inf \set{ \pi r^2}{A \mbox{ embeds symplectically into the cylinder } B_r^2 \times \R^{2n-2}}
\]
has this property. 

Now let $P$ be the orthogonal projector onto a complex subspace of real dimension $2k$, and let $\varphi:B\rightarrow \R^{2n}$ be a symplectomorphism such that $P \varphi(B)$ is convex (for instance, because $\varphi$ is $C^1$-close to a linear map). If Viterbo's conjecture holds true for the symplectic capacity $c$ in dimension $2k$, then
\[
1 = \frac{c(B)}{\pi} = \frac{c\bigl( \varphi(B) \bigr)}{\pi} \leq \frac{c\bigl( P \varphi(B) \bigr)}{\pi} \leq \left( \frac{\mathrm{vol}_{2k}\bigl(P \varphi(B)\bigr)}{\omega_{2k}} \right)^{1/k},
\]
and hence
\[
\mathrm{vol}_{2k}(P \varphi(B)) \geq \omega_{2k}.
\]
Therefore, Viterbo's conjecture implies the middle dimensional non-squeezing inequality which is considered in this paper, whenever the shadow of the symplectic ball is convex. So far, the best result concerning Viterbo's conjecture for arbitrary convex domains is due to S.\ Artstein-Avidan, V.\ Milman and Y.\ Ostrover \cite{avo08}  and states that the inequality
\[
\frac{c(A)}{\pi} \leq C \left( \frac{\mathrm{vol}_{2n}(A)}{\omega_{2n}} \right)^{1/n}
\]
holds with a constant $C>1$ which does not depend on $n$. Very recently, J.\ C.\ \'Alvarez Paiva and F.\ Balacheff have proved that the inequality with the sharp constant $C=1$ holds locally for a very large family of one-parameter deformations of the ball, in the case where $c(A)$ is the minimum of the symplectic area bounded by closed characteristics on $\partial A$ (see \cite{apb11}, and in particular \cite{apb12}). This result implies that the conclusion of the above Corollary holds for a much larger class of paths of symplectic diffeomorphisms starting at a linear one. More details about this will appear elsewhere. 


\providecommand{\bysame}{\leavevmode\hbox to3em{\hrulefill}\thinspace}
\providecommand{\MR}{\relax\ifhmode\unskip\space\fi MR }
\providecommand{\MRhref}[2]{%
  \href{http://www.ams.org/mathscinet-getitem?mr=#1}{#2}
}
\providecommand{\href}[2]{#2}

\end{document}